\documentclass[11pt]{amsart}
\usepackage{geometry}                % See geometry.pdf to learn the layout options. There are lots.
\geometry{letterpaper}                   % ... or a4paper or a5paper or ... 
\usepackage{graphicx,comment}
\usepackage{amssymb, amsthm}
\DeclareGraphicsRule{.tif}{png}{.png}{`convert #1 `dirname #1`/`basename #1 .tif`.png}
\sloppy

\newtheorem{thm}{Theorem}[section]
\newtheorem*{thm*}{Theorem}
\newtheorem{prop}[thm]{Proposition}

\newtheorem{lemma}[thm]{Lemma}
\newtheorem{cor}[thm]{Corollary}
\newtheorem*{cor*}{Corollary}

\theoremstyle{definition}
\newtheorem*{ack}{Acknowledgment}
\newtheorem{defn}[thm]{Definition}
\newtheorem{conj}{Conjecture}
\newtheorem*{conj*}{Conjecture}
\newtheorem{remark}[thm]{Remark}

\newtheorem{question}{Question}
\newtheorem{prob}{Problem}
\newtheorem{claim}{Claim}

\newtheorem*{cor1}{Corollary \ref{cor:decunivth2dpc}}
\newtheorem*{cor2}{Corollary \ref{cor:atomic}}
\newtheorem*{cor3}{Corollary \ref{cor:closedgraphproducts}}
\newtheorem*{cor4}{Corollary \ref{cor:join}}
\newtheorem*{cor5}{Corollary \ref{cor:closedclasses}}
\newtheorem*{thm11}{Theorem \ref{cor:univequivstronemb}}
\newtheorem*{cor7}{Corollary \ref{cor:comt}}
\newtheorem*{cor8}{Corollary \ref{cor:tembuni}}
\newtheorem*{cor9}{Corollary \ref{cor:decemb2dim}}
\newtheorem*{cor10}{Corollary \ref{cor:chartameemb}}
\newtheorem*{thm1}{Theorem \ref{cor:decEGE}}
\newtheorem*{thm2}{Theorem \ref{thm:noncommtree}}

\newcommand{\factor}[2]{{\raise0.7ex\hbox{$#1$} \!\mathord{\left/ {\vphantom {#1 {#2}}}\right.\kern-\nulldelimiterspace}
\!\lower0.7ex\hbox{${#2}$}}}
\newcommand{\BN}{\mathbb{N}}
\newcommand{\BZ}{\mathbb{Z}}

\newcommand{\GG}{\ensuremath{\mathbb{G}}}
\newcommand{\BA}{\ensuremath{\mathbb{A}}}
\newcommand{\HH}{\ensuremath{\mathbb{H}}}

\newcommand{\lra}{\leftrightarrows}

\DeclareMathOperator{\Th}{Th}

\DeclareMathOperator{\St}{Star}
\DeclareMathOperator{\defl}{defl}
\DeclareMathOperator{\infl}{infl}

\title{Embeddability and universal theory of partially commutative groups}
\author[M. Casals-Ruiz]{Montserrat Casals-Ruiz}
\address{Mathematical Insitute, Andrew Wiles Building, Woodstock Rd, University of Oxford, Oxford, OX2 6GG, UK}
\email{montsecasals@gmail.com}

\thanks{The author is supported by the Marie Curie International Incoming Fellowship within the 7th European Community Framework Programme and partly supported by the Spanish Government grant MTM2011-28229-C02-02 and Juan de la Cierva Programme}

\keywords{Partially commutative group, right-angled Artin group, embeddability, universal equivalence}
%\date{}                                           % Activate to display a given date or no date

\begin{document}

\begin{abstract}

The first part of the paper centers in the study of embeddability between partially commutative groups. In \cite{KK}, for a finite simplicial graph $\Gamma$, the authors introduce an infinite, locally infinite graph $\Gamma^e$, called the extension graph of $\Gamma$. They show that each finite induced subgraph $\Delta$ of $\Gamma^e$ gives rise to an embedding between the partially commutative groups $\GG(\Delta)$ and $\GG(\Gamma)$. Furthermore, it is proven that in many instances the converse also holds. Our first result is the decidability of the Extension Graph Embedding Problem: there is an algorithm that given two finite simplicial graphs $\Delta$ and $\Gamma$ decides whether or not $\Delta$ is an induced subgraph of $\Gamma^e$. As a corollary we obtain the decidability of the Embedding Problem for 2-dimensional partially commutative groups. 

In the second part of the paper, we relate the Embedding Problem between partially commutative groups to the model-theoretic question of classification up to universal equivalence. We use our characterisation to transfer algebraic and algorithmic results on embeddability to model-theoretic ones and obtain some rigidity results on the elementary theory of atomic pc groups as well as to deduce the existence of an algorithm to decide if an arbitrary pc group is universally equivalent to a 2-dimensional one.

\end{abstract}
\maketitle
\section{Introduction}

Partially commutative groups, also known as right-angled Artin groups, are a class of groups widely studied on account of their simple definition, their intrinsically rich structure and their natural appearance in several branches of computer science and mathematics. 

Recall that the {\em partially commutative group {\rm(}or pc group for short{\rm)} with commutation graph } $\Gamma$ is the group $\GG(\Gamma)$ with presentation 
$$
\left< V(\Gamma) \mid [x_i, x_j]=1, \hbox{ for each } (x_i, x_j) \in E(\Gamma) \right>,
$$ 
where $\Gamma$ is a finite (undirected) simple graph, with vertex set $V(\Gamma)$ and edge set $E(\Gamma)$.

Crucial examples, which shape the theory of presentations of groups, arise from study of subgroups of pc groups: notably Bestvina and Brady's example of a group which is homologically finite (of type $FP$) but not geometrically finite (in fact not of type $F_2$); and Mihailova's example of a group with unsolvable subgroup membership problem. More recently results of Wise and others have lead Agol to prove that every hyperbolic $3$-manifold is virtually a subgroup of a pc group and to conclude that the virtually fibred conjecture holds. In the light of such results it is natural to ask which groups arise as subgroups of partially commutative groups, and, in particular, when one partially commutative group embeds in another.

\begin{question} [Problem 1.4 in \cite{CSS}, Question 1 in \cite{KK}] \label{q4}
Does there exist an algorithm that given two simplicial graphs $\Delta$ and $\Gamma$ decides whether or not there exists an embedding from $\GG(\Delta)$ to $\GG(\Gamma)$?
\end{question}

Significant progress towards answering  Question \ref{q4} has been made by Kim and Koberda in \cite{KK} using the notion of the extension graph. The extension graph $\Gamma^e$ of a graph $\Gamma$ is defined as follows: vertices of $\Gamma^e$ are in one-to-one correspondence with conjugates of generators of $\GG(\Gamma)$, that is 
$$
V(\Gamma^e)=\{ g^{-1} x g \in \GG(\Gamma) \mid x\in V(\Gamma), g\in \GG(\Gamma)\}
$$ 
and there is an edge in $\Gamma^e$ when the elements associated to the corresponding vertices commute in the group, i.e. 
$$
E(\Gamma^e)= \{ (u, v) \mid [u,v]=1 \hbox{ in } \GG(\Gamma)\}.
$$

In \cite{KK} it is shown that if $\Delta$ is an induced subgraph of the extension graph $\Gamma^e$, then this graph embedding induces a group embedding from $\GG(\Delta)$ to $\GG(\Gamma)$. We call this type of embeddings \emph{extension graph embeddings}, see Definition \ref{def:tameemb}. The converse is shown to hold under additional conditions on $\Gamma$, namely if $\Gamma$ is triangle-free. In this case, $\GG(\Delta)$ embeds in $\GG(\Gamma)$ if and only if $\Delta$ is an induced subgraph of $\Gamma^e$ (\cite[Theorem 10]{KK}). Recall that $\Delta$ is an induced subgraph of $\Gamma$ if it is isomorphic to a full subgraph of $\Gamma$; and if $\Delta$ is not an induced subgraph of $\Gamma$, then $\Gamma$ is said to be $\Delta$\emph{-free}, see Section \ref{sec:pre}.

These results suggested that the extension graph could be the graph-theoretical tool to determine when a pc group is a subgroup of another one or, in other words, that the following two Embedding Problems are equivalent.

\begin{prob}[Embedding Problem]\label{prob:EP}
Determine when there exists an embedding from a pc group $\GG(\Delta)$ to a pc group $\GG(\Gamma)$.
\end{prob} 

\begin{prob}[Extension Graph Embedding Problem]\label{prob:EGEP}
Determine when there exists an extension graph embedding from a pc group $\GG(\Delta)$ to a pc group $\GG(\Gamma)$.
\end{prob}

However, we show in \cite{CDK} that in general, the Embedding Problems are not equivalent: there are pc groups $\GG(\Delta)$ and $\GG(\Gamma)$ for which there exists an embedding from $\GG(\Delta)$ to $\GG(\Gamma)$ but there are no extension graph embeddings, i.e. $\Delta$ is not an induced subgraph of $\Gamma^e$.

\medskip

The aim of this paper is two-fold: on the one hand, to advance in the study of the Embedding Problems between pc groups; on the other hand, to relate them to the model-theoretic question of classification of pc groups up to universal equivalence and obtain some rigidity results on the elementary theory of atomic pc groups. More concretely, in the first part of the paper, we prove the following:

\begin{thm1}
The Extension Graph Embedding Problem is decidable: there exists an algorithm that given two simplicial graphs $\Delta$ and $\Gamma$ decides whether or not $\Delta$ is an induced subgraph of $\Gamma^e$.
\end{thm1}
Note that if $\Gamma$ is not a clique, then the extension graph is infinite and if in addition, $\Gamma$ is not a disjoint union of cliques, then the extension graph is not locally finite.

\smallskip

As we briefly mentioned, in general the extension graph is not sufficient to characterise when a pc group is a subgroup of another one. However, under additional conditions on the target or the source pc group, it is: if $\Delta$ is a path $P_n$ of length $n$, $\Gamma$ is triangle-free or $\Gamma$ is triangle-built (square and $P_3$-free), then the Embedding Problems are equivalent. For these cases, we deduce from Theorem \ref{cor:decEGE} that the Embedding Problem is decidable.

\begin{cor9}
There is an algorithm that given a triangle-free or triangle-built simplicial graph $\Gamma$ and arbitrary simplicial graph $\Lambda$, decides whether or not there exists an embedding from $\GG(\Lambda)$ to $\GG(\Gamma)$.
\end{cor9}

In the remainder of Section \ref{sec:1}, we investigate conditions on the target and source pc group under which the Embedding Problems are equivalent. Among other results, we prove the following:

\begin{thm2}
If $\Delta$ is the complement of a forest, then the Embedding Problems are equivalent: $\GG(\Delta)$ embeds into $\GG(\Gamma)$ if and only if the graph $\Delta$ is an induced subgraph of the extension graph $\Gamma^e$.
\end{thm2}

Recall that the complement $\overline \Delta$ of a graph $\Delta$ is a graph with set of vertices $V(\Delta)$ whose set of edges is the complement of $E(\Delta)$, that is 
$$
E(\overline \Delta) = (V(\Delta) \times V(\Delta)) \setminus (\{ (v,v), v\in V(\Delta)\} \cup E(\Delta)\}).
$$ 
The class of graphs that are complement of forests is ``generic'' in the sense that every pc group $\GG(\Delta)$ embeds into a pc group whose defining graph is the complement of a forest, see \cite{KK2}. Our result raises the following question: in order to determine whether or not the Embedding Problems are equivalent for a given (source) pc group $\GG(\Delta)$, does it suffice to consider target pc groups whose defining graphs are complement of forests? In other words, suppose that whenever $\GG(\Delta)$ embeds into $\GG(\Gamma)$ for some complement of a forest $\Gamma$, we have that $\Delta < \Gamma^e$. Would this imply that whenever $\GG(\Delta)$ embeds into an arbitrary $\GG(\Gamma)$, then $\Delta< \Gamma^e$?

The analogous Embedding Problem for pc monoids, known as codings of trace monoids, was intensively studied in the mid 80's. In \cite{Kunc}, Kunc proves that the Coding Problem for pc monoids is undecidable, that is there is no algorithm to decide whether or not there exists an embedding between two given trace monoids. However, if one restricts to the case when the defining graph of the source pc monoid is the complement of a forest, then the Embedding Problem is decidable. As a corollary of the above theorems, we deduce the analogous result for pc groups, more precisely, we show:

\begin{cor7} {\rm(}cf. Corollary 11.5 in \cite{Kunc}{\rm)}
There is an algorithm that given the complement of a forest $\Delta$ and an arbitrary simplicial graph $\Gamma$ decides whether or not $\GG(\Delta)$ embeds into $\GG(\Gamma)$.
\end{cor7}

\bigskip

The second part of the paper explores the connection between embeddability and universal equivalence. Recall that the \emph{universal theory} of a group is the set of all universal sentences (in the language of groups) that are true in the group, and that two groups are called \emph{universally equivalent} if their universal theories coincide.

Analysing the free abelian groups case, it is immediate to see that embeddability is not sufficient in order to provide a classification up to universal equivalence: $\BZ^n$ is universally equivalent to $\BZ$, for all $n\ge 1$ and obviously $\BZ^n$ does not embed into $\BZ$, if $n>1$. We show that basically this is the only obstruction and that if one replaces the cyclic groups defined by generators of the pc group by free abelian groups of sufficiently large rank, then embeddability (in this bigger group) characterises universal equivalence. More precisely, we prove:

\begin{thm11}
Let $\Delta$ and $\Gamma$ be two simplicial graphs, $|V(\Delta)|=n$ and $|V(\Gamma)|=m$. Then, $\GG(\Delta)$ is universally equivalent to $\GG(\Gamma)$ if and only if there exist (tame) embeddings $\varphi:\GG(\Delta) \to \GG(\Gamma_n)$ and $\psi: \GG(\Gamma) \to \GG(\Delta_m)$, where $\GG(\Delta_n)$ is the graph product with underlying graph $\Delta$ and vertex groups isomorphic to $\BZ^n$ {\rm(}see Definition {\rm\ref{def:infl})}.
\end{thm11}

\medskip

Using the linearity of pc groups, one can characterise universal equivalence in terms of the residual properties of the group as follows, see \cite{remes}: two partially commutative groups are universally equivalent if and only if they discriminate each other. Recall that a group $G$ is said to be \emph{discriminated} by $H$ if for any finite set $S$ of elements of $G$ there exists a homomorphism from $G$ to $H$ that is injective on $S$. Finitely generated groups universally equivalent to (non-abelian) free groups are also known as limit groups. In this terminology, our result gives a characterisation of when a pc group $\GG(\Gamma)$ is a limit group over a pc group $\GG(\Delta)$.

\medskip

The fact that when studying universal equivalence one can replace the cyclic groups defined by generators of the pc groups $\GG(\Delta)$ by free abelian groups of sufficiently large rank allows us to consider special type of embeddings from $\GG(\Gamma)$ to $\GG(\Delta_m)$, called tame, see Definition \ref{def:tameemb}. Roughly speaking, for tame embeddings there are no repetitions of letters in the images of the generators of $\GG(\Delta)$. In this setting, we show that given two arbitrary pc groups $\GG(\Delta)$ and $\GG(\Gamma)$, there exist finitely many elements $w_1, \dots, w_k \in \GG(\Delta)$ and $n$-tuples $\overline{g_1}, \dots, \overline{g_{nm}}$ from $\GG(\Gamma)$ which completely determine the existence of a tame embedding from $\GG(\Delta)$ to $\GG(\Gamma)$, $n=|V(\Delta)|$, $m=|V(\Gamma)|$. More precisely, we prove:

\begin{cor10}
Let $\Delta$ and $\Gamma$ be simplicial graphs, $|V(\Delta)|=n$, $|V(\Gamma)|=m$. There exist words $w_1, \dots, w_k \in \GG(\Delta)$ and $\overline{g_1}, \dots, \overline{g_{nm}} \in \GG(\Gamma)^n$ such that the following are equivalent:

\begin{itemize}
\item there exists a tame embedding from $\GG(\Delta)$ to $\GG(\Gamma)$;
\item at least one of the maps $\varphi_i$ that sends the tuple of generators $V(\Delta)$ to the tuple $\overline{g_i}$ is injective in the set of words $w_1, \dots, w_k$, $i=1, \dots, nm$, or in other words, there exists $i\in \{1, \dots, nm\}$ such that $w_j=1$ in $\GG(\Delta)$ if and only if $\varphi_i(w_j)=1$ in $\GG(\Gamma)$ for all $j=1, \dots, k$.
\end{itemize}
\end{cor10}

Furthermore, the number $k$ and the tuples $\overline g_i$ can be found algorithmically. Therefore, if one could effectively bound the length of the words $w_1, \dots, w_k$ in Corollary \ref{cor:chartameemb}, then one would deduce the decidability of the Tame Embedding Problem. In other words, one would obtain an algorithm that given simplicial graphs $\Delta$ and $\Gamma$ decides whether or not there is a tame embedding from $\GG(\Delta)$ to $\GG(\Gamma)$. We conjecture that the Tame Embedding Problem is decidable and so, if this is the case, we would conclude that one can algorithmically decide whether or not two pc groups are universally equivalent.

\begin{cor8}
If the Tame Embedding Problem is decidable, then there is an algorithm that given two simplicial graphs $\Delta$ and $\Gamma$ decides whether or not $\GG(\Delta)$ and $\GG(\Gamma)$ are universally equivalent.
\end{cor8}

Although, in general, the Tame Embedding Problem is open, we proved that it is decidable for certain classes of pc groups and in such cases the above corollary applies. In particular, we deduce that there is an algorithm to decide when an arbitrary pc group is universally equivalent to a $2$-dimensional one and, more generally, to a  pc group whose deflation graph is triangle-free (see Definition \ref{def:defl}).

\begin{cor1}
There exists an algorithm that given a triangle-free simplicial graph $\Delta$ and a simplicial graph $\Gamma$ decides whether or not $\GG(\Delta)$ and $\GG(\Gamma)$ are universally equivalent.
\end{cor1}

Our characterisation of the universal equivalence in algebraic terms allows us to transfer algebraic results on embeddability to model-theoretic ones and prove the following corollaries.

\begin{cor3}
Universal equivalence in the class of pc groups is preserved under free products, direct products and more generally graph products:
$$
\GG(\Delta_i) \equiv_\forall \GG(\Gamma_i) \ i=1, \dots, |V(\Lambda)|  \Rightarrow \mathcal G(\Lambda, S(\Delta))  \equiv_\forall \mathcal G(\Lambda, S(\Gamma)),
$$
where $\Lambda$ is a simplicial graph, $S(\Delta)=\{\GG(\Delta_1), \dots, \GG(\Delta_{|V(\Lambda)|})\}$, $S(\Gamma)=\{\GG(\Gamma_1), \dots, \GG(\Gamma_{|V(\Lambda)|})\}$ and $\mathcal G(\Lambda,S )$ denotes the graph product with underlying graph $\Lambda$ and vertex groups $S$.
\end{cor3}

\medskip

\begin{cor4}
If $\Gamma$ is not a join and has more than one vertex, then
$$
\Th_\forall(\GG(\Gamma)) = \Th_\forall ( \mathop{\ast}\limits_{i=1}^{n} \GG(\Gamma)).
$$
\end{cor4}

\medskip

\begin{cor5}
The following classes of partially commutative groups are closed under universal equivalence, that is if $\GG(\Gamma)$ belongs to a class $\mathcal A$ described below and $\GG(\Delta)$ is universally equivalent to $\GG(\Gamma)$, then $\GG(\Delta)$ also belongs to $\mathcal A$:
\begin{itemize}
\item the class of pc groups whose deflation graph is a tree {\rm(}see Definition {\rm\ref{def:defl})};
\item the class of pc groups whose deflation graph is triangle-free;
\item the class of weakly chordal pc groups {\rm(}i.e. graphs that do not contain triangles and paths with more than $3$ edges{\rm)}.
\end{itemize}

\end{cor5}

\medskip

In a different vein, we can deduce rigidity results for the elementary theory of atomic pc groups. In \cite{BKS}, Bestvina, Kleiner and Sageev introduce the class of atomic graphs, that is connected graphs with no valence 1 vertices, no cycles of length less than 5 and no separating closed stars of vertices, and establish quasi-isometric rigidity for pc groups defined by this class of graphs: two pc groups defined by atomic graphs are quasi-isometric if and only if they are isomorphic. We show that a similar type of rigidity holds for the elementary theory of pc groups defined by atomic graphs.

\begin{cor2}\
Let $A_1$ and $A_2$ be two atomic graphs. Then the following are equivalent:
\begin{itemize}
\item $\GG(A_1)$ and $\GG(A_2)$ are elementarily equivalent;
\item $\GG(A_1)$ and $\GG(A_2)$ are universally equivalent;
\item $\GG(A_1)$ and $\GG(A_2)$ are isomorphic.
\end{itemize}
\end{cor2}

\begin{ack}
Part of this work was written while the author was visiting the university of the Basque Country. The support of the ERC grant PCG-336983 is gratefully acknowledged. 

The author is grateful to Ilya Kazachkov, Martin Bridson and the referees for their constructive comments.
\end{ack}

\section{Basics on partially commutative groups}\label{sec:pre}

We begin by recalling notions of the theory of partially commutative groups and graph theory.

Throughout this paper, by a graph $\Gamma$ we mean a simplicial, undirected graph. We say that $\Lambda$ is an \emph{induced subgraph} of $\Gamma$ if there is an injective map $i: V(\Lambda)\to V(\Gamma)$ such that $(v,v') \in E(\Lambda)$ if and only if $(i(v),i(v')) \in E(\Gamma)$, for all $v,v'\in V(\Lambda)$. Given a subset $S$ of $V(\Gamma)$, the graph \emph{spanned} by $S$ in $\Gamma$ is the induced subgraph with vertex set $S$. The \emph{complement} of the graph $\Gamma$, denoted by $\overline \Gamma$, is the graph with set of vertices $V(\Gamma)$ and whose set of edges
$$
E(\overline \Gamma) = (V(\Gamma) \times V(\Gamma)) \setminus (\{ (v,v), v\in V(\Gamma)\} \cup E(\Gamma)\})
$$ 
is the complement of $E(\Gamma)$, that is $(v,v') \in E(\overline \Gamma)$ if and only if $(v,v') \notin E(\Gamma)$, for all $v,v'\in V(\overline \Gamma)$, $v\ne v'$. A graph $\Gamma$ is called a \emph{clique} if for all $v,v'\in V(\Gamma)$, $v \ne v'$, we have that $(v,v') \in E(\Gamma)$. A graph $\Gamma$ is called a \emph{join} if $V(\Gamma)$ is the disjoint union of two sets, say $V$ and $V'$, and $(v,v') \in E(\Gamma)$, for all $v\in V, v' \in V'$.

\medskip

Let $A = V(\Gamma) = \{a_1, \dots , a_n\}$ be the set of vertices of $\Gamma$, let $F(A)$ be the free group on the alphabet $A$ and $\GG(\Gamma)$ the pc group with finite defining graph $\Gamma$. Abusing the terminology, we often identify vertices of $\Gamma$ with letters of the alphabet $A$, and elements of $\GG(\Gamma)$ with words in the alphabet $A$. We say that a letter occurs in a word if either the letter or its formal inverse appears in the word. We reserve the term \emph{occurrence} to denote a specific occurrence of a letter or of the formal inverse of a letter in a word. More precisely, an occurrence is a pair (letter or its inverse, its placeholder in the word).

For a  word $w\in \GG(\Gamma)$ denote by $\alpha(w)$ the set of letters occurring in a geodesic representing $w$. We define $\BA(w)$ to be the subgroup of $\GG(\Gamma)$ generated by all letters that do not occur in a geodesic representing $w$ and commute with $w$. Both $\alpha(w)$ and the subgroup $\BA(w)$ are well-defined (independent of the choice of a geodesic). If $w' \in \BA(w)$, we say that $w$ and $w'$ \emph{disjointly commute} and write $w \lra w'$.

Let $g,h \in \GG$. We say that $h$ \emph{left-divides} (\emph{right-divides}) $g$ if there exist geodesic words $w$ and $v$ representing $g$ and $h$ respectively such that $w$ has as initial (final) subword $v$. In this paper, we assume that the reader is familiar with divisibility in pc groups and refer to \cite{EKR} and \cite{CK1} for details and further bibliography on the subject.

For a partially commutative group $\GG=\GG(\Gamma)$, we call the complement of $\Gamma$ the \emph{non-commutation graph} of $\GG$. The graph $\overline \Gamma$ is a union of its connected components, namely $I_1, \ldots , I_k$. Then
\begin{equation}\label{eq:decomp}
\GG= \GG(I_1) \times \cdots \times \GG(I_k).
\end{equation}

Let $w' \in \GG$, $w'=g^{-1} w g$ and $w$ be cyclically reduced. Recall that $w$ is cyclically reduced if the length of $ww$ is twice the length of $w$. Consider the induced subgraph of $\overline \Gamma$ spanned by the set of vertices $\alpha(w)$ and denote it by $\Delta(\alpha(w))$. We will call $w'$ a \emph{block} if the graph $\Delta(\alpha(w))$ is connected. If $\Delta(\alpha(w))$ is not connected, then $w$ can be represented as the product of commuting words

\begin{equation} \label{eq:bl}
w= w_{j_1} \cdot w_{j_2} \cdots w_{j_t};\ j_1, \dots, j_t \in J,
\end{equation}
where $|J|$ is the number of connected components of $\Delta(\alpha(w))$ and $w_{j_i}$ are words in letters from the $j_i$-th connected component. Clearly, the words $\{w_{j_1}, \dots, w_{j_t}\}$ pairwise commute. Each word $w_{j_i}$, $i \in {1, \dots,t}$ is a block and so we refer to presentation (\ref{eq:bl}) as the \emph{block decomposition} of $w$. Note that, in \cite{KK}, the block decomposition is called pure factor decomposition.

An element $w\in \GG$ is called a \emph{least root} (or simply, \emph{root element}) of $v\in \GG$ if there exists $0\ne m\in \BN$ such that $v=w^m$ and there does not exists $w'\in \GG$ and $m'\in \BN$ such that $w={w'}^{m'}$. In this case we write $w=\sqrt{v}$. By \cite{DK}, partially commutative groups have least roots, that is the root element of $v$ exists and is unique.

The following theorem reduces the conjugacy problem for arbitrary elements of a partially commutative group to the one for block elements.

\begin{thm*}[Proposition 5.7 of \cite{EKR}] \label{prop:57}\
Let $w= w_{1} \cdot w_{2} \cdots w_{t}$ and $v= v_{1} \cdot v_{2} \cdots v_{s}$ be cyclically reduced elements decomposed into the product of blocks. Then $v$ and $w$ are conjugate if and only if $s= t$ and, upto index re-enumeration,  $w_{i}$ is  conjugate to $v_{i},\, i= 1, \dots ,t$.
\end{thm*}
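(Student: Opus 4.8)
The plan is to derive both implications from one structural fact, the \emph{cyclic permutation lemma}: if $u$ and $u'$ are cyclically reduced and conjugate in $\GG$, then $u'$ can be obtained from $u$ by a finite sequence of \emph{cyclic permutations}, i.e. moves of the form $xp\mapsto px$ in which $x$ is a single letter (a generator or its inverse), $xp$ is a geodesic, and $px=x^{-1}(xp)x$. Such a move neither creates nor destroys letters, so it preserves $\alpha(u)$; consequently conjugate cyclically reduced elements have the same support, hence the same graph $\Delta(\alpha(u))$ and the same number of connected components. I record also the standard ancillary fact that a cyclically reduced element realises the minimal length in its conjugacy class.

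For the forward implication, assume $w$ and $v$ are conjugate. By the lemma $\alpha(w)=\alpha(v)$, so $\Delta(\alpha(w))=\Delta(\alpha(v))$ and both words have blocks supported on the \emph{same} components $C_1,\dots,C_t$; in particular $s=t$. The decisive observation is that a single cyclic permutation acts \emph{within one block}: the moved letter $x$ lies in a unique component $C_i$, and by (\ref{eq:decomp}) applied inside the parabolic subgroup $\langle\alpha(w)\rangle\cong\GG(C_1)\times\cdots\times\GG(C_t)$ every letter of $C_i$ commutes with every letter of the other components. Hence sliding $x$ past the other blocks changes nothing outside $C_i$ and performs a cyclic permutation of the $C_i$-block. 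Projecting the whole sequence of moves from $w$ to $v$ onto the direct factor $\GG(C_i)$ therefore cyclically permutes $w_i$ to $v_i$; matching blocks by their supports yields the required re-enumeration with $w_i$ conjugate to $v_i$.

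For the converse, assume $s=t$ and, after re-enumeration, $w_i$ is conjugate to $v_i$ for each $i$. By the lemma $\alpha(w_i)=\alpha(v_i)=:C_i$, and $v_i$ is reached from $w_i$ by cyclic permutations involving only letters of $C_i$. I now replay these sequences on $w=w_1\cdots w_t$ one block at a time: since each letter of $C_i$ commutes with all letters of the remaining blocks, every such move is a legitimate cyclic permutation of the whole word and leaves the other blocks untouched. After processing all blocks, $w$ has been transformed into $v_1\cdots v_t=v$ (the order of the commuting blocks being immaterial). As each cyclic permutation is a conjugation, $w$ and $v$ are conjugate.

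The main obstacle is establishing the cyclic permutation lemma, where I would use the divisibility theory of \cite{EKR} and \cite{CK1}. Starting from conjugate cyclically reduced $u,u'$ of equal length and a conjugator $g$ of minimal length, write $g=xh$ with $x$ a leading letter. The crux is to show that the single-letter conjugation $u\mapsto x^{-1}ux$ is again cyclically reduced of the same length — otherwise a cancellation would shorten $g$, contradicting minimality — and that an equal-length single-letter conjugation of a cyclically reduced element is precisely a cyclic permutation. Induction on the length of $g$ then finishes the proof. The delicate case is the interaction of $x$ with a commuting prefix of $u$: one must exploit that $u$ is cyclically reduced to ensure that $x$ genuinely left-divides $u$ rather than cancelling against its terminal part, so that the move really is of the form $xp\mapsto px$. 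Once the lemma is available, both implications reduce to bookkeeping about how cyclic permutations interact with the decomposition of $u$ into commuting blocks.
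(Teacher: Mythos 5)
The paper does not actually prove this statement: it is quoted as Proposition 5.7 of \cite{EKR} and used as a black box, so there is no internal proof to compare yours against. Measured against the source, your argument is correct and follows essentially the standard route: the cyclic permutation (transposition) characterisation of conjugacy for cyclically reduced elements, plus the observation that $\langle \alpha(w)\rangle$ splits as $\GG(C_1)\times\cdots\times\GG(C_t)$ along the components of $\Delta(\alpha(w))$, so that each move $xp\mapsto px$ projects to a cyclic permutation in the single factor containing $x$ and to the identity in all others; matching blocks by their supports (legitimate, since cyclic permutations preserve $\alpha$) gives the re-enumeration, and the converse follows by choosing, via the same lemma, conjugators $g_i\in\GG(C_i)$ so that $\prod_i g_i^{-1}w_ig_i=(\prod_i g_i)^{-1}w(\prod_i g_i)$. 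Two small points to make explicit if you write this out: in the minimal-conjugator induction for the lemma you must also allow the reverse move, i.e.\ the case where $u=qx^{-1}$ and $x^{-1}ux=x^{-1}q$ (conjugation by the leading letter of $g$ need not act by left-division on $u$); and in the projection step you should note that deleting the letters outside $C_i$ from a geodesic word yields a geodesic in $\GG(C_i)$, so the projected moves are again legitimate cyclic permutations. Neither is a gap in substance, and the scaffolding you invoke is exactly what \cite{EKR} develops.
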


\begin{cor*} \label{cor:prop:57}
Let $w= w_1^{r_1} \cdot w_2^{r_2} \cdots w_t^{r_t}$ and $v= v_1^{l_1} \cdot v_2^{l_2} \cdots v_s^{l_s}$ be cyclically reduced elements decomposed into the product of blocks, where $w_i$ and $v_j$ are root elements, $l_i,r_j \in \BN$, $i=1,\dots, t$, $j=1,\dots, s$. Then $w$ and $v$ are conjugate if and only if $s=t$ and, upto index re-enumeration, $r_i=l_i$ and $w_i$ is conjugate to $v_i$, $i= 1, \dots, t$.
\end{cor*}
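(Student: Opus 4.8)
The plan is to deduce the statement from the theorem above (Proposition 5.7 of \cite{EKR}) together with the uniqueness of least roots in pc groups. Since $\alpha(w_i^{r_i})=\alpha(w_i)$, the graph $\Delta(\alpha(w_i^{r_i}))$ is connected precisely when $\Delta(\alpha(w_i))$ is, so each $w_i^{r_i}$ is a block exactly when $w_i$ is; hence $w=w_1^{r_1}\cdots w_t^{r_t}$ and $v=v_1^{l_1}\cdots v_s^{l_s}$ are genuine block decompositions and Proposition 5.7 applies directly. It tells us that $w$ and $v$ are conjugate if and only if $s=t$ and, after re-enumerating, $w_i^{r_i}$ is conjugate to $v_i^{l_i}$ for each $i$. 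Thus the whole corollary reduces to a single claim about one pair of blocks: if $a$ and $b$ are root elements and $a^r$ is conjugate to $b^l$, then $r=l$ and $a$ is conjugate to $b$.

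To prove this claim I would first record that conjugation preserves root-ness and commutes with extraction of least roots. Indeed, if $b=\sqrt{b}$ and $g^{-1}bg=d^m$ for some $m>1$, then $b=(gdg^{-1})^m$ contradicts $b$ being a root; so $c:=g^{-1}bg$ is again a root element, and moreover $\sqrt{g^{-1}xg}=g^{-1}\sqrt{x}\,g$ for every $x$, since conjugation is an automorphism. Now write $a^r=g^{-1}b^l g=(g^{-1}bg)^l=c^l$, where $c=g^{-1}bg$ is a root element conjugate to $b$.

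Applying $\sqrt{\cdot}$ to the equality $a^r=c^l$ and using that the least root of a power of a root element is that root element itself (so $\sqrt{a^r}=a$ and $\sqrt{c^l}=c$), the uniqueness of least roots in pc groups (\cite{DK}) forces $a=c$. In particular $a$ is conjugate to $b$, and substituting back gives $a^r=a^l$. As $a$ is a nontrivial element of a torsion-free group it has infinite order, whence $r=l$; alternatively one may simply compare geodesic lengths of the cyclically reduced elements $a^r$ and $a^l$. This establishes the claim, and feeding it back into the reduction furnished by Proposition 5.7 yields both directions: the converse is immediate, since $r_i=l_i$ together with $w_i$ conjugate to $v_i$ gives $w_i^{r_i}$ conjugate to $v_i^{l_i}$, and Proposition 5.7 reassembles these into a conjugacy between $w$ and $v$.

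The argument is essentially soft once the right tools are invoked, so I do not expect a serious obstacle; the only point requiring care is the passage from ``$a^r$ conjugate to $b^l$'' to ``$a^r=c^l$ with $c$ a root element'', that is, recognising that the conjugating element can be absorbed into the base \emph{before} extracting roots, and confirming that the least-root operation is genuinely conjugation-equivariant and single-valued as guaranteed by \cite{DK}.
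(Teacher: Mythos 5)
Your proposal is correct and follows exactly the route the paper intends: the corollary is stated in the paper without proof, as an immediate consequence of Proposition 5.7 of \cite{EKR} together with the existence and uniqueness of least roots from \cite{DK}, which is precisely your reduction (Proposition 5.7 matches the blocks $w_i^{r_i}$ with $v_i^{l_i}$ up to conjugacy, and conjugation-equivariance plus uniqueness of $\sqrt{\cdot}$, together with torsion-freeness, forces $r_i=l_i$ and $w_i$ conjugate to $v_i$). No gaps; your care about absorbing the conjugator into the base before extracting roots is exactly the one point that needed checking.
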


The next result describes centralisers of elements in partially commutative groups. 

\begin{thm}[Centraliser Theorem, Theorem 3.10, \cite{DK}] \label{thm:centr} \
Let $w\in \GG$ be a cyclically reduced word, $w=v_1\dots v_k$ be its block decomposition. Then, the centraliser of $w$ is the following subgroup of $\GG$:
\begin{equation} \notag
C(w)=\langle \sqrt{v_1}\rangle \times \cdots \times \langle \sqrt{v_k} \rangle\times \BA(w).
\end{equation}
\end{thm}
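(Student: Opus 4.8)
The plan is to prove the two inclusions separately, treating $\supseteq$ as routine and reducing $\subseteq$ to the case of a single block. Write $Y=\alpha(w)$ and let $Y^\perp$ denote the set of vertices of $\Gamma$ outside $Y$ that are adjacent to every vertex of $Y$, so that $\BA(w)=\GG(Y^\perp)$ and $\GG(Y\cup Y^\perp)=\GG(Y)\times\GG(Y^\perp)$. For the inclusion $\supseteq$ I would simply check that each displayed factor commutes with $w$: each $\sqrt{v_i}$ commutes with $v_i$ by definition of the root, and commutes with $v_j$ for $j\ne i$ because $\alpha(\sqrt{v_i})=\alpha(v_i)=I_i$ and $I_j$ lie in distinct connected components of the induced subgraph of $\overline\Gamma$ spanned by $Y$, hence are joined by edges of $\Gamma$; and $\BA(w)$ commutes with $w$ by definition. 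The same support considerations show that the factors pairwise commute and sit inside distinct direct factors of $\GG(I_1)\times\cdots\times\GG(I_k)\times\GG(Y^\perp)$, so that their product is genuinely direct.

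For the inclusion $\subseteq$ the first step is a \emph{support lemma}: if $g$ commutes with the cyclically reduced word $w$, then $\alpha(g)\subseteq Y\cup Y^\perp$, i.e.\ every letter occurring in $g$ either occurs in $w$ or commutes with all of $w$. I would prove this by the standard normal-form and divisibility analysis (using the divisibility results of \cite{EKR,CK1}): if some letter $x\in\alpha(g)\setminus Y$ failed to commute with a letter of $w$, then comparing geodesic representatives of $gw$ and $wg$ and using that $w$ is cyclically reduced yields a cancellation contradiction. Granting this, I can write $g=g_1g_2$ with $g_1\in\GG(Y)$ and $g_2\in\GG(Y^\perp)=\BA(w)$; since $g_2$ already commutes with $w$, so does $g_1$, and the problem reduces to computing the centraliser inside $\GG(Y)$.

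Inside $\GG(Y)$ the connected components of the non-commutation graph spanned by $Y$ are exactly $I_1,\dots,I_k$, so by the decomposition \eqref{eq:decomp} we have $\GG(Y)=\GG(I_1)\times\cdots\times\GG(I_k)$ with $v_i\in\GG(I_i)$. Decomposing $g_1=h_1\cdots h_k$ accordingly, the relation $g_1w=wg_1$ splits into $h_iv_i=v_ih_i$ for each $i$, reducing everything to the single-block statement: the centraliser of a block $v_i$ inside $\GG(I_i)$, where the non-commutation graph on $I_i$ is connected and $\alpha(v_i)=I_i$, equals $\langle\sqrt{v_i}\rangle$ (when $|I_i|=1$ this is immediate, as $\GG(I_i)$ is infinite cyclic). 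For this I would first pass to the root: if $h$ commutes with $v_i=\sqrt{v_i}^{\,r}$ then $(h\sqrt{v_i}h^{-1})^r=\sqrt{v_i}^{\,r}$, and uniqueness of roots (\cite{DK}) forces $h\sqrt{v_i}h^{-1}=\sqrt{v_i}$; hence $C(v_i)=C(\sqrt{v_i})$ and it suffices to treat a root block $u=\sqrt{v_i}$.

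The crux, and the step I expect to be the main obstacle, is showing $C(u)=\langle u\rangle$ for a cyclically reduced root block $u$ whose full support $I_i$ induces a connected non-commutation graph. Here $\langle u,h\rangle$ is abelian, hence free abelian, and the point is to use connectedness to force rank one in the direction of $u$. Concretely, I would analyse the periodic structure of the normal form of a high power $u^N$: since $h$ commutes with $u^N$ and $\alpha(h)\subseteq I_i$, matching $h$ against the period of $u^N$ shows that $h$ and $u$ share a common geodesic period, and connectedness of the non-commutation graph on $I_i$ rules out $h$ using only a proper ``sub-block'' of that period (which would disconnect it); since $u$ is a root, the common period must be $u$ itself, whence $h\in\langle u\rangle$. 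Assembling the three reductions then yields $C(w)=\langle\sqrt{v_1}\rangle\times\cdots\times\langle\sqrt{v_k}\rangle\times\BA(w)$.
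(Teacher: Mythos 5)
The paper does not prove this statement at all: it is imported verbatim as Theorem 3.10 of \cite{DK}, so there is no in-paper argument to compare against, and your attempt has to be judged on its own. Your reduction scaffolding is correct and is the standard one: the inclusion $\supseteq$ is routine; the support lemma (every $g\in C(w)$ splits as $g=g_1g_2$ with $\alpha(g_1)\subseteq\alpha(w)$ and $g_2\in\BA(w)=\GG(Y^\perp)$) is true and can legitimately be extracted from the divisibility theory of \cite{EKR,CK1}, though your one-line ``cancellation contradiction'' gesture hides a genuine induction; the componentwise splitting $\GG(Y)=\GG(I_1)\times\cdots\times\GG(I_k)$ correctly reduces everything to a single block; and the passage from $v_i$ to $\sqrt{v_i}$ via uniqueness of root extraction in \cite{DK} is sound and non-circular (note $\sqrt{v_i}$ is again cyclically reduced with $\alpha(\sqrt{v_i})=I_i$, as you implicitly use).

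The genuine gap is step five, and you have correctly located but not closed it: the claim $C(u)=\langle u\rangle$ for a cyclically reduced root element $u$ whose full support spans a connected non-commutation graph \emph{is} the Centraliser Theorem -- everything before it is bookkeeping. The sketch ``matching $h$ against the period of $u^N$ shows that $h$ and $u$ share a common geodesic period'' does not work as stated: in a partially commutative group there is no canonical linear word against which to match, since occurrences of letters from commuting subsets can be shuffled arbitrarily across the normal form of $u^N$, so $hu^N=u^Nh$ yields no prefix/period alignment the way it would in a free monoid. The assertion that commuting elements share a common period, with connectedness excluding proper sub-blocks, is precisely the Lyndon--Sch\"utzenberger analogue for traces (Duboc's theorem on commutation in free partially commutative monoids, extended to groups), and proving it is the actual content of Servatius' Centralizer Conjecture; \cite{DK} establishes it by an entirely different mechanism, the partially commutative Magnus embedding into a ring of formal power series. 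As written, your argument assumes the key combinatorial fact it is supposed to prove, so the proposal is a correct reduction to the hard case plus a restatement of that case, not a proof.
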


\begin{cor*} \label{cor:centr}
For any $w\in \GG$ we have that $C(w)=C(\sqrt{w})$.
\end{cor*}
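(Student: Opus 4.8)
The plan is to reduce to the case where $w$ is cyclically reduced and then read off both centralisers from the Centraliser Theorem (Theorem~\ref{thm:centr}), checking that the two resulting direct product decompositions coincide factor by factor. Note first that one inclusion is free: since $w$ is a power of $\sqrt w$, anything commuting with $\sqrt w$ commutes with $w$, so $C(\sqrt w)\subseteq C(w)$ always, and the factor-by-factor comparison below will in fact yield equality directly.

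First I would handle the reduction. Writing $w=g^{-1}w_0g$ with $w_0$ cyclically reduced, uniqueness of least roots (\cite{DK}) gives $\sqrt{w}=g^{-1}\sqrt{w_0}\,g$, since conjugation sends roots to roots and preserves the property of not being a proper power: from $w=(g^{-1}\sqrt{w_0}g)^m$ and the fact that $g^{-1}\sqrt{w_0}g$ is not a proper power, it is the least root of $w$. As $C(w)=g^{-1}C(w_0)g$ and $C(\sqrt w)=g^{-1}C(\sqrt{w_0})g$, it suffices to prove the identity for $w_0$; so I may assume $w$ is cyclically reduced and set $u=\sqrt{w}$, $w=u^m$.

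The heart of the argument is to show that the block data of $w$ and of $u$ agree, and the step I expect to be the main obstacle is the claim that the least root of a cyclically reduced element is again cyclically reduced. Granting this, $u$ is cyclically reduced, so $|u^m|=m|u|$ and a geodesic of $w$ is an $m$-fold concatenation of a geodesic of $u$; hence $\alpha(w)=\alpha(u)$. Consequently $\Delta(\alpha(w))=\Delta(\alpha(u))$ has the same connected components, and if $u=u_1\cdots u_k$ is the block decomposition of $u$ then, as the $u_i$ pairwise commute, $w=u_1^m\cdots u_k^m$ is the block decomposition of $w$; that is, the $i$-th block of $w$ is $v_i=u_i^m$. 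The cyclic-reducedness claim should follow from the standard cyclic decomposition theory of pc groups (\cite{EKR,CK1}): writing $u=z^{-1}\hat u z$ with $\hat u$ cyclically reduced and $z$ of minimal length, one has $w=z^{-1}\hat u^{m}z$ with $\hat u^m$ cyclically reduced, and minimality of the conjugator against the cyclically reduced $w$ forces $z=1$.

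Finally I would match the two decompositions. Since $\alpha(w)=\alpha(u)$ and $\BA$ depends only on the letter set and the commutation graph, $\BA(w)=\BA(u)$. Moreover $\sqrt{v_i}=\sqrt{u_i^m}=\sqrt{u_i}$ by uniqueness of roots, so $\langle\sqrt{v_i}\rangle=\langle\sqrt{u_i}\rangle$ for each $i$. Applying Theorem~\ref{thm:centr} to both $w$ and $u$ then yields
$$
C(w)=\langle\sqrt{v_1}\rangle\times\cdots\times\langle\sqrt{v_k}\rangle\times\BA(w)
=\langle\sqrt{u_1}\rangle\times\cdots\times\langle\sqrt{u_k}\rangle\times\BA(u)=C(u)=C(\sqrt w),
$$
as required.
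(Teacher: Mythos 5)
The paper states this corollary without proof, as an immediate consequence of the Centraliser Theorem, and your argument is exactly the intended derivation: reduce to the cyclically reduced case using conjugation-equivariance of least roots, show that $u=\sqrt w$ is cyclically reduced with $\alpha(w)=\alpha(u)$, blocks $v_i=u_i^m$, $\sqrt{u_i^m}=\sqrt{u_i}$ and $\BA(w)=\BA(u)$, and read off both centralisers from Theorem \ref{thm:centr}. Your proof is correct; in particular, your treatment of the one genuinely non-obvious step — that the least root of a cyclically reduced element is itself cyclically reduced — goes through, since writing $u=z^{-1}\hat u z$ geodesically with $\hat u$ cyclically reduced gives $|w|=2|z|+m|\hat u|$ while $|w^2|=2|z|+2m|\hat u|$ (powers of cyclically reduced elements concatenate geodesically, as in \cite{EKR,CK1}), and cyclic reducedness of $w$ then forces $|z|=0$.
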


\medskip

We finish this section by recalling some results on embeddability. In \cite{KK}, Kim and Koberda conducted a systematic study of the Embedding Problem and proved, among other things, that if there exists an embedding from a pc group to another one, then there exists a ``nice" one. This means that there exists an embedding such that the image of each generator $x_i$ of the source pc group $\GG(\Delta)$ is a product of pairwise commuting conjugates of powers of generators of the target pc group $\GG(\Gamma)$, i.e. for all $x_i \in V(\Delta)$, 
$$
\varphi(x_i) = y_{i1} \cdots y_{ir_i},
$$
where $y_{ij} = g_{ij}^{-1} x_{ij}'^{n_{ij}} g_{ij}$, $x_{ij}' \in V(\Gamma)$, $n_{ij}\in \BZ$, $g_{ij} \in \GG(\Gamma)$ and $[y_{is}, y_{it}]=1$ for all $1\le s<t\le r_i$.

\begin{defn}\label{def:tameemb}

In the above notation, if all conjugates of generators that appear in the images of the generators of $\GG(\Delta)$ are different (independently of the power in which they appear), i.e. if $g_{ij}^{-1} x_{ij}' g_{ij} \ne g_{kl}^{-1} x_{kl}'g_{kl}$ for all $(i,j) \ne (k,l)$, we say that the embedding is \emph{tame}. 

If in addition, the image is just one conjugate of a generator, i.e. $r_i=1$, $i=1, \dots, |V(\Delta)|$, we call the embedding an \emph{extension graph embedding}. 
\end{defn}

In the latter case, we have that the graph $\Delta$ is an induced subgraph of the \emph{extension graph} $\Gamma^e$ of $\Gamma$  and hence the name. 

\medskip

In the introduction we have formulated the Embedding and Extension Graph Embedding Problems, see Problems \ref{prob:EP} and \ref{prob:EGEP}. Naturally, the analogous problem arises for the intermediate class of tame embeddings.

\begin{prob}[Tame Embedding Problem]\label{prob:TEP}
Determine when there exists a tame embedding from a pc group $\GG(\Delta)$ to a pc group $\GG(\Gamma)$.
\end{prob}

We further refer to Problems \ref{prob:EP}, \ref{prob:EGEP} and \ref{prob:TEP} as the Embedding Problems.

\section{Embeddability between pc groups}\label{sec:1}

The first goal of this section is to prove that the Extension Graph Embedding Problem is decidable, or in other words, there is an algorithm that given two simplicial graphs $\Delta$ and $\Gamma$ decides whether or not $\Delta$ is an induced subgraph of the extension graph $\Gamma^e$.

\smallskip

In the second part of this section, we study conditions on the graphs $\Delta$ and $\Gamma$ which imply that the Embedding Problems are equivalent. More precisely, we show that if the graph that defines the source pc group is the complement of a forest or if the deflation graph of the target pc group is triangle-free, then the Embedding Problems are equivalent. 

\smallskip
Finally, we combine these results to obtain the decidability of the Embedding Problem for some families of pc groups. For instance, we deduce that the Embedding Problem for $2$-dimensional pc groups is decidable: there is an algorithm that given a pc group $\GG(\Delta)$ and a $2$-dimensional pc group $\GG(\Gamma)$ decides whether or not $\GG(\Delta) < \GG(\Gamma)$. Similarly, we show that the Embedding Problem is decidable if the source pc group is defined by the complement of a forest. As we mentioned in the introduction, Kunc proved an analogous result for pc monoids.

\subsection{Decidability of the Extension Graph  Embedding Problem}

\
\smallskip

In this section we show that there is an algorithm that given simplicial graphs $\Delta$ and $\Gamma$ decides whether or not the graph $\Delta$ is an induced graph of the extension graph $\Gamma^e$. Note that if the graph $\Gamma$ is not a clique, or equivalently if $\GG(\Gamma)$ is not abelian, then the extension graph $\Gamma^e$ is infinite. Furthermore, if $\Gamma$ is not a disjoint union of cliques, then the extension graph is not locally finite.

The strategy of proof is very natural although its formalisation is a bit technical. If $\Delta$ is an induced subgraph of $\Gamma^e$, then, by definition, the image of every vertex $x_i \in V(\Delta)$ is a conjugate of a generator $y_i^{g_i}$, $y_i\in V(\Gamma^e), g_i \in \GG(\Gamma)$. The key idea is that if there is an extension graph embedding, then there is one with the property that the conjugators have length bounded above by a constant $R=R(\Delta, \Gamma)$. More precisely, we prove the following:

\begin{thm}\label{thm:boundembd}
Let $\Delta$ and $\Gamma$ be simplical graphs, $V(\Delta)=\{x_1, \dots, x_n\}$, $V(\Gamma)=\{y_1, \dots, y_M\}$ and let $K$ be the number of connected components of $\Delta$. Define $R$ to be $4 K n^2 M^{K+1}$. If $\Delta$ is an induced subgraph of $\Gamma^e$, then $\Delta$ is an induced subgraph of $B(\Gamma^e, R)$, where $B(\Gamma^e, R)$ is the induced subgraph of $\Gamma^e$ spanned by the finite set of vertices $\{ y_i^{v} \mid |v| \le R\}$.
\end{thm}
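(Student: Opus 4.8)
The plan is to prove that any extension graph embedding can be pruned to one whose conjugators are geodesics of length at most $R$, and the starting point is an explicit criterion, coming from the Centraliser Theorem, for when two conjugates of generators commute. Write $\phi(x_i)=y_{a_i}^{g_i}$ with each $g_i$ a geodesic. A direct computation gives that $y_a^{g}$ and $y_b^{h}$ commute in $\GG(\Gamma)$ if and only if $a$ commutes with the conjugate $b^{w}$, where $w=hg^{-1}$. By the Centraliser Theorem $C(a)=\GG(\St(a))$ and $a$ is central in it, so this holds precisely when $b\in\St(a)$ and the support $\alpha(\widehat w)\subseteq\St(a)$, where $\widehat w$ is obtained from $w$ by deleting a maximal left-divisor lying in $\GG(\St(b))$. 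The essential feature is that this criterion depends on $w$ only through its support relative to the stars $\St(a)$ and $\St(b)$, a subset of $V(\Gamma)$ of size at most $M$; the whole difficulty is that this bounds the support but not a priori the \emph{length} of the $g_i$, since a geodesic can repeat letters arbitrarily often.

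Second, I would organise the conjugators geometrically. Since conjugating the whole configuration by a fixed element preserves every commutation relation, the tuple $(g_1,\dots,g_n)$ may be altered by a common right factor; and, more importantly, in the relative word $w_{ik}=g_kg_i^{-1}$ any common right-divisor of $g_i$ and $g_k$ cancels. I would therefore record $\{g_1,\dots,g_n\}$, read from the right, as a finite rooted tree $T$ whose $n$ leaves carry the labels $a_i$ and whose edges are labelled by geodesic segments: two conjugators run together along their common right-divisor and split at the point where these diverge. Such a tree has at most $n$ leaves, hence at most $n-1$ branch vertices and at most $2n-1$ topological edges, and by the criterion above whether $y_{a_i}^{g_i}$ and $y_{a_k}^{g_k}$ commute is determined by the labels along the $T$-path from leaf $i$ to leaf $k$. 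The problem thus reduces to bounding the length of the geodesic word labelling each topological edge of $T$.

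Third, I would bound each edge by a pigeonhole and excision argument. Fix an edge $e$ with a long geodesic label $s_1\cdots s_L$ and let $S_e$ be the set of conjugators traversing it. Excising a subword $s_{j+1}\cdots s_{j'}$ from $e$ — hence from every $g_i$ with $i\in S_e$ — leaves the relative words $w_{ik}$ unchanged for pairs with both or neither endpoint in $S_e$, and affects only pairs with exactly one endpoint in $S_e$, for which the excised letters lie in the non-cancelling part of $w_{ik}$. For such a pair required to commute, the relevant support must sit inside a single star, a set of size at most $M$, so at most $M$ distinct letters may occur and a repeated ``star-membership state'' may be collapsed; for a pair required not to commute it is enough to keep one letter witnessing the failure of the star condition. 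Attaching to each position along $e$ the tuple of these finitely many states, the number of possible states is bounded by a fixed power of $M$, so once $e$ is long enough two positions carry the same state and the segment between them can be deleted with no change to any commutation, non-commutation or distinctness relation. Iterating over the at most $2n-1$ edges produces geodesics of length at most $R$.

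The \textbf{main obstacle} is exactly the simultaneity required in the third step: a single excision is shared by all conjugators in $S_e$ and must preserve \emph{every} relation at once — in particular it must not create a spurious commutation between vertices of $\Delta$ that are required to be non-adjacent, including the cross-component non-adjacencies (vertices in distinct connected components of $\Delta$ must map to non-commuting conjugates). Controlling these competing constraints is what forces the state recorded along $e$ to be tracked level by level in $T$, component by component: handling the $K$ connected components of $\Delta$ in turn, the conjugators of each new component must avoid the stars pinned down by the already-placed ones, and the admissible state space grows by a factor of $M$ with each component, giving the factor $M^{K+1}$. Multiplying the at most $2n-1$ topological edges by the resulting per-edge bound — itself of order $Kn\,M^{K+1}$ from the pigeonhole over these states and the at most $n^2$ pairwise relations to be witnessed — yields the explicit constant $R=4Kn^2M^{K+1}$; checking that one excision step genuinely preserves all relations at once, rather than each separately, is the delicate point.
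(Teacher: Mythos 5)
Your overall goal (prune a given extension graph embedding to one with conjugators of bounded length, using the fact that commutation of $y_a^{g}$ and $y_b^{h}$ is a star/support condition on the relative word $hg^{-1}$) matches the paper's, and your commutation criterion via the Centraliser Theorem is correct. But the mechanism you propose has a genuine gap at exactly the point you flag. First, the rooted ``common right-divisor'' tree is not well defined in a pc group: right-divisors form a lattice, not a chain, and cancellation in $w_{ik}=g_kg_i^{-1}$ is not confined to a common suffix of chosen geodesic representatives (e.g.\ $g_i=ab$, $g_k=ba$ with $[a,b]=1$ are equal as elements with no common suffix as words), so ``commutation is determined by the labels along the $T$-path'' fails. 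Second, and more seriously, the excision step is asserted rather than proved: deleting a segment between two positions with the same ``star-membership state'' can change \emph{which occurrences cancel} in each relative word, and hence can both create spurious commutations and destroy witnesses of non-commutation. Controlling precisely this is the heart of the paper's argument: instead of excising from fixed geodesics, the paper rebuilds the conjugators from scratch out of \emph{marked letters} chosen along paths in the non-commutation graph $\overline\Gamma$ (via the lemma producing a path $(a_1,c_m,\dots,c_1,a_2)$ in $\overline\Gamma$ inside the double-coset representative of $w_1w_2^{-1}$, and the dichotomy that a letter either right-divides a neighbouring conjugator or disjointly commutes with it), maintaining through a double induction the invariants that all edges of $\Delta$ are preserved (IH1), that marked occurrences cancel in $w_iw_j^{-1}$ exactly when they cancel in the rebuilt conjugators (IH2) --- the cancellation bookkeeping your sketch lacks --- and that each bad edge is destroyed (IH3). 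This yields the bound $n^2M$ for connected $\Delta$, with at most $M$ new marked letters per bad edge and at most $n(n-1)/2$ bad edges.

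Your account of where $M^{K+1}$ comes from is also not the right mechanism, and I do not see how to make it work: shortening conjugators can never \emph{create} the cross-component non-commutations you need (vertices in distinct components of $\Delta$ must map to non-commuting, distinct conjugates), and a pigeonhole state space ``growing by a factor of $M$ per component'' does not address this. The paper goes the opposite way: after first reducing to the case that $\Gamma$ is not a join (if $\Gamma$ is a join then a disconnected $\Delta$ must land in a single factor of $\Gamma^e$ --- a reduction absent from your proposal), it conjugates the already-placed image of the last component by the \emph{long} word $w=(y_1\cdots y_M)^{M+R_1+R_2}$; since $\Gamma$ is not a join, no generator is central, so the surviving subword $y_1\cdots y_M$ in the reduced conjugator forces both distinctness and non-adjacency of images across components. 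The recursion $R\le M(M+R_1+R_2)+R_2$ iterated over the $K$ components is what produces $4Kn^2M^{K+1}$. So while your first step is sound, the two load-bearing steps --- a cancellation-preserving shortening procedure and the component-separation argument --- are missing, and the second cannot be obtained by excision at all.
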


Note that the number $R$ is not optimal. Indeed, if $V(\Delta) \subset V(\Gamma)$, then $\Delta$ is an induced subgraph of $B(\Gamma^e, 1)$.

\medskip

Before turning our attention to the proof of Theorem \ref{thm:boundembd}, we present some technical lemmas.

\begin{lemma}\label{lem:1}
Let $a\in V(\Delta)$ and $u,v \in \GG(\Delta)$. If $a^u, a^v$ are geodesic and $[a^u,a^v]=1$, then $a^u=a^v$.
\end{lemma}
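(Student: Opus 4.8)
The plan is to reduce everything to the case $u=1$ and then prove the clean statement that a conjugate of a generator that commutes with that generator must equal it. First I would conjugate the hypothesis by $u$: since $(a^u)^{u^{-1}}=a$ and $(a^v)^{u^{-1}}=a^{vu^{-1}}$, the relation $[a^u,a^v]=1$ becomes $[a,a^w]=1$ with $w=vu^{-1}$. Moreover the desired conclusion $a^u=a^v$ is, after applying the (injective) inner automorphism by $u$, equivalent to $a^w=a$, i.e.\ to $w\in C(a)$. So it suffices to show: if $[a,a^w]=1$, then $a^w=a$. Note that the geodesic hypothesis is not actually needed for this.

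Next I would locate $a^w$ inside the centraliser of $a$. From $[a,a^w]=1$ we get $a^w\in C(a)$, and applying the Centraliser Theorem (Theorem \ref{thm:centr}) to the block $w=a$ gives $C(a)=\langle a\rangle\times\BA(a)$, so $a^w=a^k z$ for a unique $k\in\BZ$ and $z\in\BA(a)$. To pin down the exponent $k$ I would use the homomorphism $\pi_a\colon\GG(\Gamma)\to\BZ$ sending $a\mapsto 1$ and every other generator to $0$; this is well defined since each defining relator $[x_i,x_j]$ maps to $0$. As $\pi_a$ is invariant under conjugation, $\pi_a(a^w)=\pi_a(a)=1$, while $z\in\BA(a)$ involves no occurrence of $a$ and hence $\pi_a(z)=0$. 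Therefore $k=1$ and $a^w=az$ with $z\in\BA(a)$.

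It then remains to exclude $z\neq 1$. Since $\BA(a)$ is a parabolic subgroup (a pc group in its own right), I would cyclically reduce $z$ inside it, writing $z=c^{-1}z_0c$ with $c,z_0\in\BA(a)$ and $z_0$ cyclically reduced. Because every element of $\BA(a)$ commutes with $a$, we have $az=ac^{-1}z_0c=c^{-1}(az_0)c=(az_0)^c$, and $az_0$ is cyclically reduced of length $1+|z_0|$ (as $a$ and $z_0$ commute and $a\notin\alpha(z_0)$, one checks $|(az_0)^2|=2+2|z_0|$). Thus $1+|z_0|$ is the cyclically reduced length of $a^w$. But $a^w$ is conjugate to $a$, whose cyclically reduced length is $1$, and cyclically reduced length is a conjugacy invariant by Proposition 5.7 of \cite{EKR}. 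Hence $|z_0|=0$, so $z=1$ and $a^w=a$; back-translating, $vu^{-1}\in C(a)$ and $a^u=a^v$.

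The main obstacle is precisely this last step: ruling out a nontrivial ``link part'' $z$ in the expression $a^w=az$. The reduction to $u=1$, the passage to the centraliser, and the computation of $k$ via $\pi_a$ are all formal, but eliminating $z$ is where the geometry of pc groups genuinely enters, through the block/conjugacy machinery guaranteeing that a conjugate of a single generator has cyclically reduced length $1$ and therefore cannot absorb any letters from $\BA(a)$.
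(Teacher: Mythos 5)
Your proof is correct, but it follows a genuinely different route from the paper's. The paper argues directly on the conjugators: it factors out the greatest common right-divisor $d$, writes $u=u'd$, $v=v'd$, deduces $[a,u']=[a,v']=1$ from $[a,a^{u'{v'}^{-1}}]=1$ via the Centraliser Theorem (Theorem \ref{thm:centr}), and then uses geodesicity of $a^u$ and $a^v$ to force $u'=v'=1$, obtaining the stronger conclusion $u=v$. You instead normalise to a single conjugator $w=vu^{-1}$, decompose $a^w=a^kz$ inside $C(a)=\langle a\rangle\times\BA(a)$, pin down $k=1$ with the exponent homomorphism $\pi_a$, and eliminate $z$ by cyclic reduction plus a conjugacy-invariance argument. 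What your route buys: the element-level equality $a^u=a^v$ indeed requires no geodesic hypothesis, a fact the paper's formulation obscures. What the paper's route buys: in the proof of Theorem \ref{thm:boundembd} the lemma is actually invoked to conclude $w_i=w_j$, i.e.\ equality of conjugators, and that conclusion genuinely needs geodesicity (take $u=1$, $v=a$: then $a^u=a^v$ but $u\ne v$, and $a^v$ is non-geodesic); the gcd step delivers it directly. Your version recovers it with one extra line: $u^{-1}au$ and $v^{-1}av$ are geodesics of the same element, so $|u|=|v|$; writing $u=cv$ with $c=uv^{-1}\in C(a)$, any cancellation in the product $cv$ would exhibit a letter of the parabolic $C(a)$ left-dividing $v$, contradicting geodesicity of $a^v$, whence $|u|=|c|+|v|$ and $c=1$.

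One citation needs repair: Proposition 5.7 of \cite{EKR}, as quoted in the paper, asserts invariance of the \emph{number of blocks} together with blockwise conjugacy, not invariance of cyclically reduced length; the latter is true but is a cyclic-permutation fact not contained in the quoted statement. Fortunately your argument becomes cleaner using the proposition exactly as stated: since $a\lra z_0$ and $a\notin\alpha(z_0)$, the graph $\Delta(\alpha(az_0))$ is disconnected whenever $z_0\ne 1$, so $az_0$ has at least two blocks while $a$ is a single block, and Proposition 5.7 then forbids $az_0\sim a$ unless $z_0=1$. With that substitution your proof is complete.
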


\begin{proof}
Assume that $[a^{u}, a^{v}]=1$ and so $[a, a^{u v^{-1}}]=1$. Let $u = u' d$ and $v=v'd$, where $d$ is the greatest common right-divisor of $u$ and $v$. Then we have $a^{ uv^{-1}}=a^{u'{v'}^{-1}}$. Note that $a^{u'{v'}^{-1}}$ may be non-geodesic. Since $[a, a^{u{v'}^{-1}}]=1$, it follows from the description of centralisers, see Theorem \ref{thm:centr}, that $[a, u']=1$ and $[a,v']=1$. Since, by assumption, $a^{u} = a^{u'd}$ and $a^{v}=a^{v'd}$ are geodesics, we have that $u'= v'=1$ and so $u=v$.
\end{proof}

\begin{lemma}\label{lem:2}
Let $a_1,a_2 \in V(\Gamma)$ and $r\in \GG(\Gamma)\setminus \{1\}$. Suppose that no letter in the centraliser $C(a_1)$ of $a_1$ left-divides $r$ and no letter in the centraliser $C(a_2)$ of $a_2$ right-divides $r$. Then there exist pairwise distinct letters $c_1, \dots, c_m \in V(\Gamma)$ such that 
\begin{itemize}
\item $r= g_m c_m \cdots g_1 c_1 g_0$, for some $g_i \in \GG(\Gamma)$,
\item $c_1$ right-divides $r$, 
\item $c_m$ left-divides $r$ and 
\item the tuple $(a_1, c_m, \dots, c_1, a_2)$ defines a path in the non-commutation graph $\overline \Gamma$.
\end{itemize}
\end{lemma}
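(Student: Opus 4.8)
The plan is to first translate the two divisibility hypotheses into adjacency statements in the non-commutation graph $\overline\Gamma$, and then to produce the letters $c_1,\dots,c_m$ by extracting a maximal non-commuting chain from a geodesic representative of $r$. For the first step I would apply the Centraliser Theorem (Theorem \ref{thm:centr}) to the single generator $a_i$, which is its own root and a block, to get $C(a_i)=\langle a_i\rangle\times\BA(a_i)$; hence the letters lying in $C(a_i)$ are exactly $a_i$ together with the letters commuting with $a_i$, i.e. exactly the letters that are \emph{not} adjacent to $a_i$ in $\overline\Gamma$. Thus the hypotheses become: every letter that left-divides $r$ is adjacent to $a_1$ in $\overline\Gamma$, and every letter that right-divides $r$ is adjacent to $a_2$ in $\overline\Gamma$. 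Consequently it suffices to find pairwise distinct letters $c_1,\dots,c_m$ admitting a factorization $r=g_m c_m\cdots g_1 c_1 g_0$ with $c_1$ right-dividing $r$, $c_m$ left-dividing $r$, and consecutive $c_i$ non-commuting (a path in $\overline\Gamma$); prepending $a_1$ and appending $a_2$ then gives the desired path, the two terminal edges $(a_1,c_m)$ and $(c_1,a_2)$ coming from the reformulated hypotheses, and the inequalities $a_1\neq c_m$, $c_1\neq a_2$ being automatic since $\overline\Gamma$ is loopless.

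Next I would establish existence of such a non-commuting chain. Fix a geodesic word $w$ for $r$ and put a partial order on its occurrences, generated by declaring $i\prec j$ (for positions $i<j$) whenever the letters $w_i,w_j$ are \emph{dependent}, meaning equal or non-commuting. A standard check using divisibility in pc groups shows that the minimal occurrences are precisely the left-dividing letters and the maximal occurrences the right-dividing letters (an occurrence with no earlier, resp. later, dependent occurrence can be commuted to the front, resp. the back). I would then pick a maximal occurrence $q_0$ and a saturated descending chain $q_0\succ q_1\succ\cdots\succ q_t$ to a minimal occurrence $q_t$. Since $\prec$ is the transitive closure of the dependence relation, a covering pair cannot factor through an intermediate occurrence, so consecutive letters along the chain are directly dependent; reading them from $q_t$ (leftmost) to $q_0$ (rightmost) yields a sequence of pairwise-adjacent positions starting at a left-divisor and ending at a right-divisor, and slicing $w$ at these positions produces a factorization $r=g_m c_m\cdots g_1 c_1 g_0$. (Alternatively, one can build such a chain by induction on $|r|$: peel off a right-divisor $c_1$; if it also left-divides, stop with $m=1$, otherwise find a right-divisor of $r c_1^{-1}$ that fails to commute with $c_1$ and recurse.)

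The main obstacle, and where I would spend the care, is upgrading this chain to one with \emph{distinct} letters and genuinely non-commuting consecutive letters. The relation used above is dependence, which is strictly weaker than non-commutation: equal letters are dependent but contribute no edge of $\overline\Gamma$, so the raw chain may repeat letters and even have equal consecutive letters. To resolve this cleanly I would, among all factorizations of the form found above, choose one with $m$ minimal. If some $c_i$ and $c_j$ coincide (in the left-to-right reading), I delete the letters strictly between them and absorb the intervening factors into a single element of $\GG(\Gamma)$; the splice remains non-commuting because the two coinciding letters share all adjacencies in $\overline\Gamma$, while the left- and right-dividing endpoints of $r$ are preserved. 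This produces a strictly shorter factorization of the same type, contradicting minimality. Hence the $c_i$ are pairwise distinct, and consecutive letters, being distinct and dependent, are non-commuting, giving the required path in $\overline\Gamma$.

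The points that genuinely require verification, rather than routine bookkeeping, are exactly three: that minimal/maximal occurrences correspond to left-/right-divisors; that a covering pair in the dependence order is directly dependent (so the chain really yields $\overline\Gamma$-adjacencies after discarding equal letters); and that passing to a minimal-length factorization simultaneously removes all repetitions, keeps consecutive letters non-commuting, and leaves the endpoint divisibility conditions and the factorization itself intact. Once these are in place, the statement follows by combining the chain with the hypothesis-driven terminal edges.
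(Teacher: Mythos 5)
Your main argument is correct, but it follows a genuinely different route from the paper's. The paper proves the lemma by induction on $|r|$: pick a right-dividing occurrence $c_1$; if it does not left-divide $r$, there is an occurrence $c_2$ to its left not commuting with $c_1$, and the statement is re-applied to the geodesic prefix ending at $c_2$ (with $c_1$ now playing the role of $a_2$); since by induction $c_2,\dots,c_m$ are already pairwise distinct, a single splice at $i_0=\max\{i \mid c_i=c_1\}$ removes any repetition of $c_1$, using exactly your observation that coinciding letters share all adjacencies in $\overline{\Gamma}$. You instead work directly with the dependence order on occurrences of a geodesic (the heap order), identify minimal/maximal occurrences with left-/right-divisors, extract a saturated chain from a maximal down to a minimal occurrence, and then enforce distinctness globally by taking a factorization with $m$ minimal and splicing. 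The two proofs hinge on the same two mechanisms (a dependence chain between a left- and a right-divisor, plus a splice), so they are cousins; what your version buys is that it makes explicit the trace-theoretic facts the paper uses silently, and it sidesteps a looseness in the paper's inductive step: as written, the paper re-applies the lemma to $g_2'c_2$ with $a_2$ replaced by $c_1$, although the hypothesis that no letter of $C(c_1)$ right-divides $g_2'c_2$ need not hold --- what is really being inducted on is the chain statement with a \emph{designated} right-dividing occurrence, which is precisely what your poset construction delivers directly. The cost is that you must verify the standard facts about the occurrence order, which you correctly isolate as the points needing care.

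One caveat: your parenthetical alternative recursion --- peel off $c_1$ and find a right-divisor of $rc_1^{-1}$ failing to commute with $c_1$ --- does not work as stated. Take $r=xyc$ where the only commutation is $[y,c]=1$: then $c$ right-divides $r$ and does not left-divide it, but the unique letter right-dividing $rc^{-1}=xy$ is $y$, which commutes with $c$, so the recursion gets stuck. The correct move (the paper's, and in effect your main route's) is to take the next chain letter to be \emph{any} occurrence to the left of $c_1$ that fails to commute with it (here $x$), not necessarily a right-divisor of $rc_1^{-1}$, and recurse on the prefix ending at that occurrence. Since this aside is not load-bearing, your proof stands on the poset argument.
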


\begin{proof}
We prove the statement by induction on the length of $r$. If $|r|=1$, that is $r=c$, $c\in V(\Gamma)$, then since $c$ right and left-divides $r$, by assumption it does not commute with either $a_1$ or $a_2$. Hence $c$ satisfies the statement of the lemma and the base of induction is proven. 

We now prove the step of induction. Let $c_1$ be a right-divisor of $r$. If the occurrence $c_1$ left-divides $r$, then $c_1$ satisfies the statement of the lemma. If $c_1$ does not left-divide $r$, then there exists a letter $c_{2}$ in $r$ that does not commute with $c_1$ and $r= g_2' c_{2} g_1 c_1 g_0$. By induction on the length of $g_2' c_{2}$, there exist pairwise letters $c_m, \dots, c_2$ such that $r= g_m c_m \cdots g_2 c_2 g_1 c_1 g_0$, $c_1$ right-divides $r$, $c_m$ left-divides $r$, and the tuple $(a_1, c_m, \dots, c_1, a_2)$ defines a path in the non-commutation graph $\overline \Gamma$. If $c_1 \ne c_i$, $i=2, \dots, m$, then $c_1, \dots, c_m$ satisfy the statement of the lemma. Assume that $c_1 = c_i$, for some $i\in \{2, \dots, m\}$. Let $i_0 = \max \{ i \in \{2, \dots, m\} \mid c_1 = c_i\}$. Then $c_m, \dots, c_{i+1}, c_1$ satisfy the statement of the lemma.
\end{proof}

\begin{lemma}\label{claim1}
Let $a, b,c \in V(\Gamma)$ and $u,w \in \GG(\Gamma)$. Assume that $c$ right-divides $u$ but does not right-divide $w$. If $a^u$ and $b^w$ are geodesic and commute with each other, then $c$ disjointly commutes with $b^w$.

\end{lemma}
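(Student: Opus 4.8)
The plan is to reduce the statement to a clean \emph{divisor dichotomy} and then settle the dichotomy with the Centraliser Theorem. I first record an elementary fact $(\star)$ that drives everything: if $p\in V(\Gamma)$, $y\in\GG(\Gamma)\setminus\{1\}$ and $p^y=y^{-1}py$ is geodesic, then a letter $d$ right-divides $p^y$ if and only if $d$ right-divides $y$. The backward direction is clear; for the forward one, the rightmost occurrence of $d$ in the trailing copy of $y$ is the globally rightmost occurrence of $d$ in $p^y$, so it can be shuffled to the end precisely when $d$ right-divides $y$, the only alternative being that the central letter $p$ reaches the end, which forces $[p,y]=1$ and hence (by geodesicity) $y=1$. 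Applying $(\star)$ with $p=b$, $y=w$ and using that $c$ does not right-divide $w$ shows that $c$ does not right-divide $b^w$ (the case $w=1$ being immediate). On the other hand, writing $u=u_0c$ geodesically and using that $a^u$ is geodesic, the word $c^{-1}u_0^{-1}au_0c$ is reduced, so $c$ right-divides $a^u$. Hence the Lemma follows once we establish the dichotomy: \emph{if $c$ right-divides $a^u$ and $[a^u,b^w]=1$ with both geodesic, then either $c$ right-divides $b^w$ or $c\lra b^w$}, since the first alternative is now ruled out.

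To prove the dichotomy, note $a^u\in C(b^w)$. Conjugating the relation by $w$ gives $a^{uw^{-1}}\in C(b)$, and by the Centraliser Theorem $C(b)=\langle\St(b)\rangle=\langle b\rangle\times\BA(b)$. Since $a^{uw^{-1}}$ is a conjugate of the generator $a$ lying in the parabolic $\langle\St(b)\rangle$, the retraction of $\GG(\Gamma)$ onto $\langle\St(b)\rangle$ that kills all vertices outside $\St(b)$ forces $a\in\St(b)$; comparing $b$-exponents in the abelianisation further shows that, when $a\neq b$, one has $a^{uw^{-1}}\in\BA(b)$. Thus either $a=b$, or else $a\neq b$, $[a,b]=1$ and $a^{uw^{-1}}\in\BA(b)$.

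The case $a=b$ is excluded: Lemma \ref{lem:1} yields $a^u=a^w=b^w$ as elements, so $c$ right-divides the element $a^w=w^{-1}aw$, and $(\star)$ forces $c$ to right-divide $w$, contradicting the hypothesis. Therefore $a\neq b$, $[a,b]=1$, and $m:=a^{uw^{-1}}\in\BA(b)$. As $a\in\BA(b)$ and $m$ is a conjugate of $a$ inside the parabolic $\BA(b)$, standard properties of parabolic subgroups let us realise the conjugacy inside $\BA(b)$, say $m=n^{-1}an$ with $n\in\BA(b)$, whence $a^u=w^{-1}mw=a^{nw}$. It remains to locate the right-divisor $c$: because $c$ does not right-divide $w$, it cannot be contributed by the trailing copy of $w$ in $a^{nw}=w^{-1}n^{-1}anw$, so it must originate from $n\in\BA(b)$. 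Carrying out this bookkeeping — via greatest common right-divisors, exactly as in the proof of Lemma \ref{lem:1} — shows $c\in\BA(b)$ (so $[c,b]=1$) and $c\notin\alpha(w)$. Consequently $c\notin\alpha(b^w)=\{b\}\cup\alpha(w)$ and $c$ commutes with $b$ and with every letter of $w$, so $c\in\BA(b^w)$, i.e. $c\lra b^w$.

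The hard part is this last bookkeeping step: disentangling, inside the non-reduced word $w^{-1}n^{-1}anw$, the contribution of $n\in\BA(b)$ from that of $w$, and thereby certifying simultaneously that $c$ commutes with $b$ and that $c$ does not occur in $w$. This is the only place where the hypothesis that $c$ does not right-divide $w$ enters in an essential way, and where the genuine interaction between the two conjugators $u$ and $w$ — rather than a formal manipulation — must be controlled.
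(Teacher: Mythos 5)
Your preparatory steps are all correct: $(\star)$ holds, conjugating the relation into $C(b)$, the retraction onto $\langle \St(b)\rangle$ forcing $a\in\St(b)$, the abelianisation argument giving $m=a^{uw^{-1}}\in\BA(b)$ when $a\ne b$, the exclusion of $a=b$ via Lemma \ref{lem:1}, and the realisation of the conjugacy inside the parabolic (choose $n$ with $n^{-1}an$ geodesic; then $\alpha(n)\subseteq\alpha(m)$ lies in the link of $b$ by isolation of parabolic subgroups) — and this architecture is genuinely different from the paper's. But the proof then stops exactly where the lemma lives. The final step is asserted, not proved, and the pointer to ``greatest common right-divisors, exactly as in the proof of Lemma \ref{lem:1}'' is inapposite: that proof works because \emph{both} conjugated words $a^u,a^v$ are geodesic and both conjugates are the same generator, whereas here $nw$ is not a priori a reduced or minimal conjugator, and the task is not to show two conjugators coincide but to \emph{locate} the letter $c$ inside $nw$ and to certify letterwise commutation of $c$ with $w$ together with $c\notin\alpha(w)$. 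No gcrd manipulation of $u$ and $nw$ produces those conclusions; from $a^u=a^{nw}$ it only yields $nw\in C(a)u$, which by itself says nothing about where $c$ sits.

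What a complete argument along your route actually requires is cancellation control in two products, and it is precisely here that the hypotheses you set aside must re-enter: (i) write $nw=zu$ with $z\in C(a)=\langle\St(a)\rangle$; since $u^{-1}au$ is geodesic, no letter of $\St(a)$ left-divides $u$, so (as every letter of $z$ lies in $\St(a)$) the product $z\cdot u$ is reduced, whence $c$, which right-divides $u$, right-divides the \emph{element} $nw$; (ii) the product $n\cdot w$ is also reduced, because a cancelling letter would lie in $\alpha(n)\subseteq\St(b)$ and left-divide $w$, contradicting geodesicity of $b^w$ — this is the one place where that hypothesis is indispensable, and it is absent from your sketch; (iii) the occurrence of $c$ witnessing right-divisibility of $nw$ cannot lie in the $w$-part (it would make $c$ right-divide $w$), so it lies in $n$, forcing $[c,b]=1$, $c\ne b$, and $[c,z]=1$ for every letter $z\in\alpha(w)$; (iv) finally $c\notin\alpha(w)$, since otherwise its rightmost occurrence in $w$ would commute with everything to its right and $c$ would right-divide $w$ after all. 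Only then does $c\lra b^w$ follow. A smaller loose end: for $w=1$ your ``immediate'' claim that $c$ does not right-divide $b^w=b$ amounts to $c\ne b$, which needs a line (if $c=b$, then writing $u=u_0c$, the relation gives $a^{u_0}\in C(c)$, so every letter of the geodesic $u_0^{-1}au_0$ commutes with $c$ and $a^u=a^{u_0}$, contradicting geodesicity of $a^u$). By contrast, the paper obtains everything in one stroke, with no case division on $a$ versus $b$ and no passage through $\BA(b)$: it analyses $g=w(c^{-1}u_1^{-1}au_1c)w^{-1}$ via the cancellation lemma (Lemma 4.9 of \cite{CK1}), splitting $w^{-1}=d_1d_2d_3$ with $d_1\lra c$, and kills $d_3$ using the Centraliser Theorem and geodesicity of $b^w$. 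Your plan is completable, but as submitted the decisive cancellation analysis — which you yourself flag as the hard part — is missing, so the proposal has a genuine gap.
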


\begin{proof}

Since $c$ right-divides $u$ we have that $u=u_1 c$ for some $u_1 \in \GG(\Gamma)$. Let $g= w ( c^{-1} u_1^{-1} a u_1 c) w^{-1}$. From cancellation theory in partially commutative groups, see Section 3 and Lemma 4.9 in \cite{CK1}, and since $c$ does not right-divide $w$, it follows that $w^{-1} = d_1 d_2 d_3$ and $u_1= u_1'' d_1^{-1}$, $d_1 \lra c$, $d_2 \lra c^{-1} u_1''^{-1} a u_1'' c$ and $d_3^{-1} c^{-1} u_1''^{-1} a u_1''cd_3$ is a geodesic word representing $g$.

Furthermore, $[a^{u}, b^{w}]=1$ implies that $[g, b]=1$ and so from the description of centralisers, see Theorem \ref{thm:centr}, we have that $[d_3, b]=1$. It follows that $d_3=1$ since otherwise $d_3$ right-divides $w^{-1}$ and commutes with $b$ - a contradiction with the fact that $b^{w}$ is geodesic. Since $d_3$ is trivial $w^{-1}=d_1d_2$ disjointly commutes with $c$. Furthermore, since $c$ is a letter in the geodesic $g$, it commutes with $b$ and so we conclude that $c$ disjointly commutes with $b^{w}$. 
\end{proof}

\medskip

We now turn to the proof of Theorem \ref{thm:boundembd}.

\begin{proof}[Proof of Theorem \ref{thm:boundembd}]
Throughout this proof, we denote by $a^v$ the element $v a v^{-1}$. Abusing the terminology, we call the subgraph $B(\Gamma^e, R)$ of $\Gamma^e$ induced by conjugates of generators so that the conjugator has length bounded by $R$ the \emph{ball} of radius $R$ in $\Gamma^e$, that is $B(\Gamma^e, R)$ is the induced subgraph of $\Gamma^e$ with vertex set $V(B(\Gamma^e, R))=\{ y_i^{v} \mid |v| \le R\}$. Assume that $\Delta$ is an induced subgraph of $\Gamma^e$ and let $f$ be a graph embedding from $\Delta$ to $\Gamma^e$.

\smallskip

We first prove the statement for connected graphs. The proof of the lemma for general graphs is by induction on the number of connected components.

\smallskip

Assume that the graph $\Delta$ is connected. Let $\{a_1^{w_1}, \dots, a_n^{w_n}\}$ be the images of the vertices of $\Delta$ in $\Gamma^e$, that is $f(x_i)=a_i^{w_i}$ and $a_i \in \{y_1, \dots, y_M\}$. Without loss of generality, we assume that $a_i^{w_i}$ is geodesic.

Our goal is to construct an embedding $f'$ from $f$ such that the images $f'(x_i)= a_i^{v_i}$ have the property that the length of the conjugators $v_i$ is bounded by $R<n^2M$, for each $i=1, \dots,n$ and where $a_i \in \{y_1, \dots, y_M\}$.

Since our proof is technical, we first give an outline. We consider a graph $\Delta'$ with $n$ vertices, $n= |V(\Delta)|$. To each vertex in $\Delta'$, we associate the generator $a_i$ of $\Gamma$ defined by the image $f(x_i)=a_i^{w_i}$, $i=1, \dots, n$. Note that different vertices of $\Delta'$ can have associated the same generator of $\Gamma$. By definition, two vertices of  $\Delta'$ are joined by an edge if and only if  the corresponding generators commute, that is $[a_i,a_j]=1$. In particular, if $a_i=a_j$ for some $i\ne j$, then we add an edge $(a_i,a_j)$. We call $\Delta'$ the \emph{extended} commutation graph of the set $\{a_1, \dots, a_n\}$.

Using the natural correspondence $x_i \to a_i$ between vertices of $\Delta$ and $\Delta'$, it follows from the definition of $\Delta'$ and the description of centralisers in pc groups, see Theorem \ref{thm:centr}, that $E(\Delta) \subset E(\Delta')$. Our proof proceeds  by induction on the number of edges in $E(\Delta') \setminus E(\Delta)$. At every step we fix an edge $e\in E(\Delta')\setminus E(\Delta)$. The idea is to use the words $w_i$ to construct the words $v_i'$ in a ``minimal way'' such that, on the one hand, we control their length and, on the other one, after conjugating the vertices of $\Delta'$ by $v_i'$, we obtain a new graph $\Delta^{2}$ such that $E(\Delta) \subset E(\Delta^2) \subset E(\Delta')\setminus \{e\}$. After finitely many steps we obtain a graph $\Delta^k$ which is equal to $\Delta$ and is a subgraph of $B(\Gamma^e, R)$, where $R$ is explicitly defined in terms of $\Delta$ and $\Gamma$.

\medskip

Let us now turn our attention to the proof. Abusing the notation, we often identify vertices $x_i \in \Delta$ with their images $a_i^{w_i}$ in $\Gamma^e$.

Let $E(\Delta') \setminus E(\Delta) =\{e_1, \dots, e_k\}$. Our goal is to prove by induction the following statement $\Phi(k,k')$:

For each $k\ge 0$ and for each $1 \le k'\le k$, there exists an integer $0 \le m \le k'|V(\Gamma)|$ and distinct letters $c_1, \dots, c_m$ such that for each $s= 0, \dots, m$ and for each $i=1, \dots, |V(\Delta)|$, there is $0\le q=q(i,s) \le m$ and $w_{i,s}'=c_q \cdots c_1$ (where, by convention, $c_0=1$), satisfying that we can mark letters $c_q, \cdots, c_1$ appearing in $w_i$ in this order, and that:

\begin{itemize}
\item {(IH1)}: if $[a_i^{w_i}, a_j^{w_j}]=1$, then $[a_i^{w_i'}, a_j^{w_j'}]=1$, $i,j \in \{1, \dots, |V(\Delta)|\}$;
\item {(IH2)}: the marked letters $c_p$ cancel in the product $w_iw_j^{-1}$ if and only if $p\le \min\{q(i,s),q(j,s)\}$;
\item {(IH3)}: For each $l=1, \dots, k'$, if we write $e_l=(a_{i_l}, a_{j_l})$, then  $[a_{i_l}^{w_{i_l,m}'}, a_{j_l}^{w_{j_l',m}}] \ne 1$;
\end{itemize}

We begin with the following observation: if $f(x_i)=a_i^{w_i}$, $f(x_j)=a_j^{w_j}$ and $a_i=a_j$, $i\ne j$, then $(x_i,x_j) \notin E(\Delta)$. Indeed, if $(x_i,x_j) \in E(\Delta)$, since $f$ is an embedding, we have that $(f(x_i),f(x_j)) \in E(\Gamma^e)$ and so by definition of the extension graph, $[a_i^{w_i}, a_j^{w_j}] = [a_i^{w_i}, a_i^{w_j}]=1$. Since we assume $a_i^{w_i}$ and $a_i^{w_j}$ to be geodesic, it follows from Lemma \ref{lem:1} that $w_i=w_j$ and so $f(x_i)=a_i^{w_i} = a_j^{w_j} =f(x_j)$ for $i\ne j$, contradicting the fact that $f$ is injective.

\smallskip

Base of induction $\Phi(0, \emptyset)$. In this case, since $k=0$ we have that $E(\Delta) = E(\Delta')$. From the above observation, it follows that the generators associated to vertices in $\Delta'$ are all pairwise distinct and so $\Delta'$ is a subgraph of $\Gamma^e$. Indeed, on the one hand, if $a_i=a_j$ for some $i \ne j$, it follows from the definition of the extended commutation graph $\Delta'$ that $(a_i,a_j) \in \Delta'$. On the other hand, if $(x_i,x_j) \in E(\Delta)$, then $(f(x_i), f(x_j)) = (a_i^{w_i}, a_j^{w_j}) \in E(\Gamma^e)$ and by the above observation $a_i \ne a_j$. Hence if $\Delta' = \Delta$, then $a_i \ne a_j$ for all $i\ne j$. In this case it suffices to take $m=s=0$ and so $q(i,0)=0$ and $w_{i,0}'=1$, for each $i=1, \dots, |V(\Delta)|$, and the induction hypothesis are satisfied.

\medskip

Base of induction for $k'$ at step $k$. In other words, we next prove $\Phi(k,1)$ for a fixed $k$.

Let $e\in E(\Delta') \setminus E(\Delta)$. Without loss of generality, set $e=(a_1, a_2)$. Note that by definition $[a_1, a_2]=1$.

Since $e \notin E(\Delta)$, it follows that $[a_1^{w_1}, a_2^{w_2}] \ne 1$ and so $[a_1^{w_1w_2^{-1}}, a_2]\ne 1$. Since $[a_1, a_2]=1$, it follows that the double-coset representative $C(a_1) w_1w_2^{-1} C(a_2)$ of $w_1w_2^{-1}$ is non-trivial, or equivalently, if  $w_1w_2^{-1}=D_1 r D_2$ where $D_i$ is the maximal divisor of $w_1w_2^{-1}$ from $C(a_i)$, then $r$ is non-trivial. Note that if a letter $c$  right-divides $r$, then it does not commmute with $a_2$. Otherwise if $r=r'c$, $c \in C(a_2)$, then we have that $w_1w_2^{-1}=D_1 r' (c_m D_2)$ and $c_m D_2 \in C(a_2)$ contradicting the choice of $D_2$. Similarly, any letter that left-divides $r$ does not commute with $a_1$. Then by Lemma \ref{lem:2}, there exist pairwise distinct letters $c_1, \dots, c_m$ such that 
\begin{itemize}
\item $r= g_m c_m \cdots g_1 c_1 g_0$, 
\item $c_1$ right-divides $r$, 
\item $c_m$ left-divides $r$ and
\item the tuple $(a_1, c_m, \dots, c_1, a_2)$ defines a path in the non-commutation graph $\overline \Gamma$.
\end{itemize}

Among such paths, we choose a shortest one and mark the right-most occurrences of the letters $c_i$ in $r$, $i=1, \dots, m$. Since there are no repetitions of letters $c_i$ in the path, it follows that $1 \le m \le |V(\Gamma)|$.

We prove the statement $\Phi(k,1)$ by induction on $1 \le m \le |V(\Delta)|$.

Base of induction $m=1$. In this case we have that $s=1$. Then by definition $a_1 c_1 a_2$ defines a path in the non-commutation graph $\overline \Delta$. Let $u_i a_i u_i^{-1}$ be a geodesic word corresponding to the element $a_i^{w_i w_2^{-1} D_2^{-1}}$, that is $a_i^{w_i w_2^{-1} D_2^{-1}}=a_i^{u_i}$ and $u_i a_i u_i^{-1}$ is a geodesic word, $i=1, \dots, n$. In particular, $u_2=1$ and $u_1=r$, $c_1$ right-divides $u_1$ and $u_1=u_1' c_1$.

If $[a_1^{u_1}, a_i^{u_i}]=1$, then it follows from Lemma \ref{claim1} that either $c_1$ right-divides $u_i$ or $c_1$  disjointly commutes with $a_i^{u_i}$.

Let $S_{c_1}$ be the set of vertices $a_i^{w_i}$ of $\Delta$ such that $c_1$ right-divides $u_i$. For each vertex $a_i^{w_i}$ in $S_{c_1}$, we mark the occurrence $c_1$ in $u_i$ that right-divides $u_i$.

\medskip

\begin{claim} \label{claim2}
Any path in $\Delta$ connecting a vertex in $S_{c_1}$ to the vertex $a_2^{w_2}$ contains a vertex $v$ such that $v \notin S_{c_1}$ and $v\ne a_2^{w_2}$.
\end{claim}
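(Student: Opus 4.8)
The plan is to prove the claim by producing the required vertex on the last edge of the path, reducing the whole statement to a single local fact: \emph{no vertex of $S_{c_1}$ can be adjacent in $\Delta$ to $a_2^{w_2}$}. Granting this, I would take an arbitrary path $p_0, p_1, \dots, p_\ell$ in $\Delta$ with $p_0 \in S_{c_1}$ and $p_\ell = a_2^{w_2}$. First I record that $a_2^{w_2} \notin S_{c_1}$: since $u_2 = 1$ by construction, the letter $c_1$ cannot right-divide $u_2$, so $p_0 \neq p_\ell$ and $\ell \geq 1$. The local fact then forces $\ell \geq 2$ (a length-one path would be an edge from a vertex of $S_{c_1}$ to $a_2^{w_2}$), and, applied to the final edge $(p_{\ell-1}, p_\ell)$, it shows that the penultimate vertex $v := p_{\ell-1}$ does not lie in $S_{c_1}$. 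As the vertices of a path are distinct, $v \neq p_\ell = a_2^{w_2}$, so $v$ is exactly the vertex demanded by the claim.

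Thus everything rests on the local fact, which I would prove by contradiction. Suppose $a_j^{w_j} \in S_{c_1}$ is adjacent in $\Delta$ to $a_2^{w_2}$. Since $f$ is an induced embedding, adjacency in $\Delta$ means $[a_j^{w_j}, a_2^{w_2}] = 1$ in $\GG(\Gamma)$. Passing from the conjugators $w_i$ to the conjugators $u_i$ is conjugation by a single common element (determined by $w_2$ and $D_2$), as is visible from the defining relation $a_i^{u_i}=a_i^{w_iw_2^{-1}D_2^{-1}}$; this operation preserves commutation, and together with $u_2 = 1$ it turns the relation into $[a_j^{u_j}, a_2] = 1$.

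Now I would invoke Lemma \ref{claim1} with the letters $a = a_j$, $b = a_2$, $c = c_1$ and the conjugators $u = u_j$, $w = u_2 = 1$. Its hypotheses are met: $c_1$ right-divides $u_j$ because $a_j^{w_j} \in S_{c_1}$, whereas $c_1$ does not right-divide $w = 1$ since $c_1 \neq 1$; the words $a_j^{u_j}$ and $a_2^{1} = a_2$ are geodesic; and they commute. The lemma then yields that $c_1$ disjointly commutes with $a_2$, i.e. $c_1 \lra a_2$, and in particular $[c_1, a_2] = 1$. This contradicts the fact that $(a_1, c_1, a_2)$ is a path in the non-commutation graph $\overline{\Gamma}$, which records precisely that $c_1$ and $a_2$ do not commute. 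This establishes the local fact and hence the claim.

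The main obstacle — and the only genuinely delicate point — is the transfer of the commutation relation from the $w_i$-picture to the $u_i$-picture: one must verify that the replacement $a_i^{w_i} \mapsto a_i^{u_i}$ really is conjugation by one element common to all $i$ (so that commuting pairs stay commuting) and that $a_2^{w_2}$ is carried to $a_2$ itself. Once this bookkeeping is secured, the application of Lemma \ref{claim1} is immediate, and I anticipate no difficulty with the geodesic hypotheses, since every conjugate-of-generator word in play has been taken geodesic from the outset. It is worth emphasising that the argument uses only the single edge of the path incident to $a_2^{w_2}$; no induction along the path or finer analysis of the intermediate vertices is needed.
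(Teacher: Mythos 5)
Your proof is correct and takes essentially the same route as the paper: the paper's own argument also reduces to the penultimate vertex, assuming $v_{t-1}\in S_{c_1}$ and applying Lemma \ref{claim1} with $u=u_{j_{t-1}}$ and $w=u_2=1$ to conclude that $c_1$ disjointly commutes with $a_2$, contradicting the choice of $c_1$ on a path in the non-commutation graph ending at $a_2$. The only cosmetic difference is that the paper packages this as locating the first index $r$ with $v_r\notin S_{c_1}$ and then showing $0<r<t$, whereas you extract the required vertex directly from the last edge of the path — the same local fact, with the same transfer from the $w_i$-conjugators to the $u_i$-conjugators via the common element determined by $w_2$ and $D_2$.
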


Let $v_0 \in S_{c_1}$ and let $p= v_0 v_1 \dots v_t$ be a path in $\Delta$. Abusing the notation and identifying the vertices with their images in $\Gamma^e$ we have that $v_i=a_{j_i}^{w_{j_i}}$, $i=0, \dots, t-1$, $v_t=a_2^{w_2}$ and $v_i \ne v_t$ for $i\ne t$. Since $p$ is a path in the commutation graph $\Delta$, it follows that $[v_i,v_{i+1}]=1$, $i=0, \dots, t-1$ and so $[a_{j_i}^{u_{j_i}}, a_{j_{i+1}}^{u_{j_{i+1}}}]=1$. Since $v_0 \in S_{c_1}$, it follows from the definition of $S_{c_1}$ that $a_{j_0}^{u_{j_0}}=a_{j_0}^{u_{j_0}'c_1}$. Since $[a_{j_0}^{u_{j_0}'c_1}, a_{j_1}^{u_{j_1}}]=1$ it follows from Lemma \ref{claim1} that either $c_1$ right-divides $u_{j_1}$ (and so $v_1\in S_{c_1}$) or $c_1$ disjointly commutes with $a_{j_1}^{u_{i_1}}$ (and so $v_{1} \notin S_{c_1}$). Let $r$ be so that for all $0\le r'<r$, $v_{r'}\in S_{c_1}$ and $v_r \notin S_{c_1}$. Note that such $r>0$ exists because $v_t=a_2^{w_2} \notin S_{c_1}$. We show that $0<r<t$ and so the statement follows. Indeed assume towards a contradiction that $v_{t-1} \in S_{c_1}$, i.e. $r=t$. Since $[a_{j_{t-1}}^{u_{j_{t-1}}'c_1}, a_2]=1$ and $c_1 $ does not right-divide $u_2$ ($u_2=1$), it follows from Lemma \ref{claim1} that $c_1$ disjointly commutes with $a_2$ - contradiction with the choice of $c_1$.

\medskip

For each vertex $a_i^{w_i}$ in $S_{c_1}$, we define $q(i,1)$ to be 1 and for each vertex $a_i^{w_i} \notin S_{c_1}$, we define $q(i,1)$ to be $0$. Define the graph $\Delta^{1}$ to be the extended commutation graph of 

$$
\{ a_i^{v_i'} \mid v_i'=c_1 \hbox{ if } v_i=a_i^{w_i} \in S_{c_1} \hbox{ and } v_i'=1 \hbox{ otherwise} \}.
$$

\medskip

\begin{claim}\label{claim3} 
The induction hypothesis {\rm(IH1)} holds: $E(\Delta) \subset E(\Delta^{1})$, that is, if $[a_i^{u_i}, a_j^{u_j}]=1$ then $[a_i^{v_i'}, a_j^{v_j'}]=1$.
\end{claim}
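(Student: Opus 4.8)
The plan is to verify the inclusion $E(\Delta)\subset E(\Delta^{1})$ one edge at a time: I would fix a pair $(i,j)$ with $[a_i^{u_i},a_j^{u_j}]=1$ and show $[a_i^{v_i'},a_j^{v_j'}]=1$, distinguishing cases according to whether $i$ and $j$ lie in $S_{c_1}$. The common starting point is the fact already used to prove $E(\Delta)\subset E(\Delta')$, namely that commuting geodesic conjugates of generators force the underlying generators to commute. Indeed, $a_j^{u_j}\in C(a_i^{u_i})$, and by the Centraliser Theorem \ref{thm:centr} we have $C(a_i^{u_i})=\langle a_i^{u_i}\rangle\times \BA(a_i^{u_i})$; thus either $a_j^{u_j}=a_i^{u_i}$, which by Lemma \ref{lem:1} would force $u_i=u_j$ and contradict the injectivity of $f$ for $i\ne j$, or $a_j^{u_j}\in \BA(a_i^{u_i})$, whence $a_j\in \BA(a_i)$ and so $[a_i,a_j]=1$ with $a_i\ne a_j$. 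Hence in every case I may assume $[a_i,a_j]=1$.

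With this in hand, two of the three cases are immediate. If neither $i$ nor $j$ lies in $S_{c_1}$, then $v_i'=v_j'=1$ and $[a_i^{v_i'},a_j^{v_j'}]=[a_i,a_j]=1$. If both lie in $S_{c_1}$, then $v_i'=v_j'=c_1$, and conjugating $[a_i,a_j]=1$ by $c_1$ gives $[a_i^{c_1},a_j^{c_1}]=1$.

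The main case, and the step I expect to be the crux, is the mixed one, say $i\in S_{c_1}$ and $j\notin S_{c_1}$, where $v_i'=c_1$ and $v_j'=1$ and I must produce $[a_i^{c_1},a_j]=1$. Here $c_1$ right-divides $u_i$ but not $u_j$, while $a_i^{u_i}$ and $a_j^{u_j}$ are geodesic and commute, so Lemma \ref{claim1} applies and yields that $c_1$ disjointly commutes with $a_j^{u_j}$, i.e. $c_1\in \BA(a_j^{u_j})$. Since $\BA(a_j^{u_j})$ is generated by the letters that do not occur in a geodesic for $a_j^{u_j}$ and commute with it, the single letter $c_1$ must itself be one of these, so $c_1\notin \alpha(a_j^{u_j})$ and $[c_1,a_j^{u_j}]=1$. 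Feeding $a_j^{u_j}\in C(c_1)=\langle c_1\rangle\times \BA(c_1)$ together with $c_1\notin \alpha(a_j^{u_j})$ back into the Centraliser Theorem forces $a_j^{u_j}\in \BA(c_1)$, so every letter occurring in $a_j^{u_j}$ commutes with $c_1$; in particular $a_j\in \alpha(a_j^{u_j})$ gives $[c_1,a_j]=1$. Finally, combining $[c_1,a_j]=1$ with $[a_i,a_j]=1$ allows $a_j$ to be pushed past $c_1a_ic_1^{-1}$, yielding $[a_i^{c_1},a_j]=1$. The symmetric subcase $j\in S_{c_1}$, $i\notin S_{c_1}$ follows by interchanging the roles of $i$ and $j$, which completes all cases and establishes $E(\Delta)\subset E(\Delta^{1})$. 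The only genuine obstacle is this mixed case: the whole point of introducing $S_{c_1}$ and marking the right-most $c_1$ is precisely so that Lemma \ref{claim1} can be invoked to evacuate $c_1$ from the neighbour that was not conjugated.
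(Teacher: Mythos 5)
Your proof is correct and follows essentially the same route as the paper's (very terse) argument: the cases where $a_i^{u_i}$ and $a_j^{u_j}$ both lie in, or both lie outside, $S_{c_1}$ are immediate from $[a_i,a_j]=1$, and the mixed case is exactly an application of Lemma \ref{claim1}, whose conclusion that $c_1$ disjointly commutes with $a_j^{u_j}$ yields $[c_1,a_j]=1$ and hence $[a_i^{c_1},a_j]=1$. One small imprecision worth fixing: the Centraliser Theorem is stated for cyclically reduced words, so for the conjugate $a_i^{u_i}$ you should write $C(a_i^{u_i})=C(a_i)^{u_i}=\langle a_i^{u_i}\rangle\times \BA(a_i)^{u_i}$ (in general $\BA(a_i)^{u_i}\ne \BA(a_i^{u_i})$), and the dichotomy you invoke follows from an exponent-sum argument on the conjugate $a_j^{u_ju_i^{-1}}\in C(a_i)$ rather than from membership in a single factor of the direct product; the conclusion $[a_i,a_j]=1$ is unaffected.
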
 

If $[a_i^{u_i}, a_j^{u_j}]=1$, then, in particular, we have that $[a_i, a_j]=1$ and so the statement is clear if both $a_i^{u_i}$ and $a_j^{u_j}$ simultaneously belong or do not belong to $S_{c_1}$. The case $a_i^{u_i} \in S_{c_1}$ and $a_j^{u_j} \notin S_{c_1}$ follows from Lemma \ref{claim1}.

\medskip

Furthermore, it follows from the construction that the occurrence $c_1$ cancels in $v_i' {v_j'}^{-1}$ if and only if $q(i,1)=q(j,1)=1$ if and only if the marked occurrences of $c_1$ in $u_i$ and $u_j$ cancel in $u_iu_j^{-1}$ (and the former occurs if and only if $a_i^{w_i}, a_j^{w_j} \in S_{c_1}$). Therefore the induction hypothesis (IH2) holds.

Since by definition $a_1 c_1 a_2$ defines a path in the non-commutation graph $\overline \Delta$, we have that $[a_1^{v_1'},a_2^{v_2'}]=[a_1^{c_1}, a_2] \ne 1$. Therefore, $E(\Delta^{1}) \subsetneq E(\Delta')$ and so the induction hypothesis (IH3) is satisfied.

\medskip

Base of induction for $s$ at step $m$, that is $u_1= r = c_m \cdots g_1 c_1$ and $s=1$. In this case, the proofs of the induction hypothesis (IH1) and (IH2) are analogous to the corresponding proofs in the base of induction $m=s=1$.

\medskip

Define $S_{c_s}$ to be the set of vertices of $\Delta$ such that $c_s$ right-divides $u_i c_1^{-1} g_1^{-1} \cdots c_{s-1}^{-1} g_{s-1}^{-1}$. In particular, $v_1 \in S_{c_s}$. For each vertex $a_i^{u_i}$ in $S_{c_s}$, we mark the occurrence of $c_s$ in $u_i$ that right-divides $u_i c_1^{-1} g_1^{-1} \cdots c_{s-1}^{-1} g_{s-1}^{-1}$.

Let $\Delta^{s}$ be the extended commutation graph of the set $\{ a_i^{v_{i,s}'}\}$, where 
$$
v_i' = v_{i,s}'= \left\{
\begin{array}{ll}  
c_j \dots c_1,& \hbox{if both } a_i^{w_i} \in S_{c_j} \hbox{ and } a_i^{w_i} \notin S_{c_{j+1}}, j=1, \dots, s;\\
1, & \hbox{if } a_i^{w_i} \notin S_{c_1}.
\end{array}\right.
$$
In the first case we have $q(i,s)=j$ and in the second one, $q(i,s)=0$. By convention, $S_{c_{s+1}}=\emptyset$.

Assume by induction that for all $s<m$ the following condition holds:
$S_{c_s} \subset S_{c_{s-1}} \subset \dots \subset S_{c_1}$.

Let us prove the induction hypothesis for $s\le m$ at step $m$. The proofs for $s < m$ and $s=m$ are slightly different, the latter being more general as it requires proof of (IH3). Hence, we further assume that $s=m$.

Recall that $S_{c_m}$ is the set of vertices of $\Delta$ such that $c_m$ right-divides $u_i c_1^{-1} g_1^{-1} \cdots c_{m-1}^{-1} g_{m-1}^{-1}$. In particular, $v_1 \in S_{c_m}$. 

\medskip

\begin{claim} \label{claim4}
$S_{c_m} \subset S_{c_{m-1}} \subset \ldots \subset S_{c_1}$.
\end{claim}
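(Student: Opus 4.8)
The plan is to prove the chain one link at a time. Since the running induction already supplies $S_{c_{m-1}}\subset\cdots\subset S_{c_1}$, it suffices to adjoin the single remaining inclusion $S_{c_m}\subseteq S_{c_{m-1}}$, and the argument I give below applies verbatim to each link $S_{c_s}\subseteq S_{c_{s-1}}$. So I fix a vertex $a_i^{u_i}\in S_{c_m}$; by definition this means that $c_m$ right-divides the reduced word $P_m:=u_i c_1^{-1}g_1^{-1}\cdots c_{m-1}^{-1}g_{m-1}^{-1}$, and I must show that $c_{m-1}$ right-divides $W:=u_i c_1^{-1}g_1^{-1}\cdots c_{m-2}^{-1}g_{m-2}^{-1}$, where I note $P_m=W\,c_{m-1}^{-1}g_{m-1}^{-1}$.

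The observation that drives the proof is that the words testing membership in the various $S_{c_s}$ are precisely the successive partial products arising in the reduction of $u_i u_1^{-1}=u_i r^{-1}$. Indeed, recalling that $u_1=r=c_m g_{m-1}c_{m-1}\cdots g_1 c_1$, one has $r^{-1}=c_1^{-1}g_1^{-1}c_2^{-1}g_2^{-1}\cdots g_{m-1}^{-1}c_m^{-1}$, so that $u_i c_1^{-1}g_1^{-1}\cdots c_{s-1}^{-1}g_{s-1}^{-1}$ is exactly the prefix of $u_i r^{-1}$ obtained before the factor $c_s^{-1}$ is multiplied in. Thus membership in $S_{c_s}$ records how deeply the boundary cancellation between $u_i$ and $r^{-1}$ has penetrated, and the nesting $S_{c_s}\subseteq S_{c_{s-1}}$ expresses that this cancellation proceeds monotonically from the outside in: one cannot expose $c_s$ without having first exposed $c_{s-1}$. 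The engine making this precise is the commutation data extracted from the \emph{shortest} path $(a_1,c_m,\dots,c_1,a_2)$ in $\overline\Gamma$: consecutive vertices are non-adjacent in $\GG$, so $[c_m,c_{m-1}]\ne 1$, whereas any two path-vertices at distance at least two commute, so $[c_m,c_t]=1$ for all $t\le m-2$.

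With this in hand I would argue by contradiction. Suppose $c_{m-1}$ does not right-divide $W$. Then the factor $c_{m-1}^{-1}$ cannot cancel when forming $W c_{m-1}^{-1}$ (a cancellation would, up to commutation, exhibit $c_{m-1}$ as a right-divisor of $W$), so $c_{m-1}^{-1}$ survives in a geodesic for $W c_{m-1}^{-1}$. I then locate the occurrence of $c_m$ that right-divides $P_m=W c_{m-1}^{-1}g_{m-1}^{-1}$ and trace it using the cancellation theory of pc groups (Section 3 and Lemma 4.9 of \cite{CK1}). Because $c_m$ commutes with every $c_t$ for $t\le m-2$, this $c_m$ would be free to commute across the entire tail $c_{m-2}\cdots c_1$ carried by $W$, forcing it to right-divide $W$ itself; but the surviving $c_{m-1}^{-1}$, which does \emph{not} commute with $c_m$, stands between them and blocks this, a contradiction. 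Hence $c_{m-1}$ right-divides $W$, i.e. $a_i^{u_i}\in S_{c_{m-1}}$, and together with the induction hypothesis this yields the full chain $S_{c_m}\subset\cdots\subset S_{c_1}$.

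The step I expect to be the main obstacle is the bookkeeping inside the connecting subwords $g_1,\dots,g_{m-1}$. These are only guaranteed to have the $c_t$ pairwise distinct as the vertices of the chosen path; a $g_j$ may well contain further occurrences of some $c_t$, so the phrase ``expose $c_s$'' must be formulated in terms of the \emph{marked} right-most occurrences (as in Lemma \ref{claim1} and Claim \ref{claim2}) rather than naive right-divisibility, and one must check that the commutation relations coming from the shortest path are exactly what prevent a spurious cancellation inside a $g_j$ from reshuffling the marks. Turning the informal ``cancellation from the outside in'' into a rigorous statement — essentially tracking the heap normal form of the product through each multiplication by $c_s^{-1}g_s^{-1}$ and invoking $[c_m,c_t]=1$ for $t\le m-2$ together with $[c_m,c_{m-1}]\ne 1$ at the critical moment — is the technical heart of the claim.
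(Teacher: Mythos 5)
Your proof is correct and is essentially the paper's own argument: the paper likewise proves only the single link $S_{c_m}\subset S_{c_{m-1}}$ (the rest being supplied by the running induction), in contrapositive form --- if $a_i^{w_i}\notin S_{c_{m-1}}$ then the appended $c_{m-1}^{-1}$ does not cancel in $u_i c_1^{-1}g_1^{-1}\cdots g_{m-2}^{-1}c_{m-1}^{-1}$, and since $[c_m,c_{m-1}]\ne 1$ this surviving occurrence blocks $c_m$ from cancelling, so $c_m$ does not right-divide $u_i c_1^{-1}g_1^{-1}\cdots c_{m-1}^{-1}g_{m-1}^{-1}$ --- which is exactly your blocking argument read backwards. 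The bookkeeping issue you flag is resolved by the construction itself (the marked letters are the \emph{right-most} occurrences in $r$, so no $c_m$ occurs in $g_1,\dots,g_{m-1}$ and the dividing occurrence must come from $u_i$ and pass the surviving $c_{m-1}^{-1}$), and the paper's proof is no more detailed than yours on this point.
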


Indeed, if $a_i^{w_i} \notin S_{c_{m-1}}$, then from the definition we have that $c_{m-1}$ does not cancel in $u_i c_1^{-1} g_1^{-1} \cdots g_{m-1}^{-1}c_{m-1}^{-1}$. Then, since $c_{m-1}$ and $c_m$ do not commute, it follows that $c_m$ does not cancel in $u_i c_1^{-1} g_1^{-1} \cdots c_{m-1}^{-1} g_{m-1}^{-1}c_m^{-1}$; therefore, $c_m$ does not right-divide $u_i c_1^{-1} g_1^{-1} \cdots c_{m-1}^{-1} g_{m-1}^{-1}$ and so $a_i^{w_i} \notin S_{c_m}$.

\medskip

Recall that $\Delta^{m}$ is the extended commutation graph with the vertex set $\{ a_i^{v_i'}\}$, where $v_i'=v_{i,m}'= c_j \dots c_1$ if both $a_i^{w_i} \in S_{c_j}$ and $a_i^{w_i} \notin S_{c_{j+1}}$, $j=1, \dots, m$ ($S_{c_{m+1}}=\emptyset$) and $v_i'=1$ if $a_i^{w_i} \notin S_{c_1}$.

\begin{claim} \label{claim5}
The induction hypothesis (IH1) holds: $E(\Delta) \subset E(\Delta^{m})$. 
\end{claim}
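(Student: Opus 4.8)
The plan is to prove Claim 5 as the general-$m$ counterpart of Claim \ref{claim3}, driven by the nesting $S_{c_m}\subset\cdots\subset S_{c_1}$ of Claim \ref{claim4} together with the disjoint-commutation Lemma \ref{claim1}. Fix an edge $(x_i,x_j)\in E(\Delta)$, that is $[a_i^{u_i},a_j^{u_j}]=1$, so in particular $[a_i,a_j]=1$; I must produce $[a_i^{v_i'},a_j^{v_j'}]=1$. Write $s=q(i,m)$ and $t=q(j,m)$ and assume without loss of generality $s\ge t$, so that $v_i'=c_s\cdots c_1$ and $v_j'=c_t\cdots c_1$, the latter being a right-segment of the former. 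If $s=t$ then $v_i'=v_j'$ and $[a_i^{v_i'},a_j^{v_j'}]=[a_i,a_j]^{v_i'}=1$, so I may assume $s>t$.

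The reduction I would use is an inner induction on $s$ that strips off the outermost letter $c_s$ of $v_i'$. Writing $a_i^{v_i'}=c_s\,(a_i^{c_{s-1}\cdots c_1})\,c_s^{-1}$, the crux is to show that $c_s$ commutes with $a_j^{v_j'}$; granting this, the commutator identity $[c_sAc_s^{-1},B]=c_s[A,B]c_s^{-1}$, valid whenever $[c_s,B]=1$, reduces the goal to $[a_i^{c_{s-1}\cdots c_1},a_j^{v_j'}]=1$, i.e. to a pair of levels $s-1\ge t$, which the induction hypothesis handles (the base $s=t$ being the trivial case above). Thus everything comes down to the single assertion that the top distinguishing letter $c_s$ disjointly commutes with $a_j^{v_j'}$.

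To obtain $c_s\lra a_j^{v_j'}$ I would pass to the $u$-coordinates, where the marking makes $c_s$ visible as a genuine right-divisor. By the nesting of Claim \ref{claim4} one has $a_i^{w_i}\in S_{c_s}$ and $a_j^{w_j}\notin S_{c_s}$, so $c_s$ right-divides $u_ic_1^{-1}g_1^{-1}\cdots c_{s-1}^{-1}g_{s-1}^{-1}$ but does not right-divide $u_jc_1^{-1}g_1^{-1}\cdots c_{s-1}^{-1}g_{s-1}^{-1}$. Since $a_i^{u_i}$ and $a_j^{u_j}$ commute, so do their common conjugates by $c_1^{-1}g_1^{-1}\cdots c_{s-1}^{-1}g_{s-1}^{-1}$, and Lemma \ref{claim1} then yields that $c_s$ disjointly commutes with the corresponding $a_j$-conjugate; as $a_j$ lies in the support of that element, this forces $[c_s,a_j]=1$ and, after transporting the disjoint-commutation statement back along the conjugators, $c_s\lra a_j^{v_j'}$.

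The hard part, and the reason this is more than a formality, is the hypothesis of Lemma \ref{claim1} that the two conjugates to which it is applied be \emph{geodesic}: right-multiplying $u_i$ and $u_j$ by $c_1^{-1}g_1^{-1}\cdots c_{s-1}^{-1}g_{s-1}^{-1}$ has no reason on its own to produce geodesic conjugating words, and any unrecorded cancellation would destroy the divisibility input. This is exactly what the marked-occurrence bookkeeping and induction hypothesis (IH2) are for: the marked right-most occurrences of $c_1,\dots,c_{s-1}$ in $u_i$ and in $u_j$ are precisely the letters removed by the truncation, and (IH2) pins down which of them cancel against each other, so that the truncated words can be certified geodesic one level at a time. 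I would therefore organise the argument so that each application of Lemma \ref{claim1} sees only a single marked letter peeled from an already-geodesic word, with (IH2) and the minimality of the chosen path guaranteeing geodesicity at every stage; once that is in place, the disjoint-commutation conclusions assemble into $[a_i^{v_i'},a_j^{v_j'}]=1$ as above, giving $E(\Delta)\subset E(\Delta^m)$.
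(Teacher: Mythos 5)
Your skeleton is essentially the paper's: the trivial case $q(i,m)=q(j,m)$, the reduction to peeling the outermost letter off $v_i'$, the identification of the crux as ``the distinguishing letter must disjointly commute with $a_j$'', and the appeal to the nesting of Claim \ref{claim4} all match, and your inner induction on $s$ is just a repackaging of the paper's outer induction on $m$ (note that $v_{i,m-1}'$ is exactly the one-letter truncation of $v_{i,m}'$, so ``a pair of levels $s-1\ge t$'' is precisely the statement $E(\Delta)\subset E(\Delta^{m-1})$). The divergence, and the genuine gap, is in how you propose to verify the hypotheses of Lemma \ref{claim1}. You apply it to the simultaneously truncated pair $a_i^{u_ih}$, $a_j^{u_jh}$ with $h=c_1^{-1}g_1^{-1}\cdots c_{s-1}^{-1}g_{s-1}^{-1}$. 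The problem is not only the geodesicity issue you flag: for this pair the lemma's \emph{conclusion} generally cannot hold when $t<s-1$, because the appended letters $c_{t+1}^{-1},\dots,c_{s-1}^{-1}$ do not cancel in $u_jh$ (that is what $a_j^{w_j}\notin S_{c_{t+1}}$ together with the non-commutation chain says), so $c_{s-1}$ sits in the support of the $a_j$-side conjugator while $[c_s,c_{s-1}]\ne 1$; and once you strip $C(a_j)$-right-divisors to restore geodesicity, the right-divisibility data supplied by the $S$-sets no longer refers to the words in play. Your closing plan --- ``each application of Lemma \ref{claim1} sees only a single marked letter peeled from an already-geodesic word'' --- does not describe the $j$-side, where nothing is peeled above level $t$ and uncancelled tail accumulates; it is a promissory note rather than an argument.

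The paper's proof avoids the truncated pair altogether, and this is the idea missing from your write-up: it works inside the single product $u_iu_j^{-1}$, where the common right factor $h$ cancels identically, so the bookkeeping of (IH2)/Claim \ref{claim6} applies verbatim. From (IH2) the marked occurrence $c_{s+1}$ does not cancel in $u_iu_j^{-1}$, and then survival is \emph{propagated along the path}: since $c_{s+1},\dots,c_m$ are consecutively non-commuting in $\overline\Gamma$, once $c_{s+1}$ survives none of $c_{s+2},\dots,c_m$ can cancel, so all of them appear in the geodesic of $u_iu_j^{-1}$; only then is Lemma \ref{claim1} invoked, yielding $c_m\lra a_j$ with its hypotheses verified, and the conclusion is combined with $E(\Delta)\subset E(\Delta^{m-1})$ exactly as in your reduction. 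Finally, your ``transport back along the conjugators'' needs repair in the corner $t=s-1$: there $[c_s,a_j^{v_j'}]=1$ does not follow letterwise, since $c_s$ and $c_{s-1}$ do not commute; the correct move (implicit in the paper) is to conjugate both sides by the common suffix $c_t\cdots c_1$ first, reducing the goal to $[a_i^{c_s\cdots c_{t+1}},a_j]=1$, which needs $c_p\lra a_j$ for \emph{all} $p=t+1,\dots,s$ --- supplied by the lower levels of the induction, not by the top-letter statement alone.
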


The statement follows immediately by induction if both $a_i^{w_i}$ and $a_j^{w_j}$ simultaneously belong or do not belong to $S_{c_m}$. Assume without loss of generality that $a_i^{w_i} \in S_{c_m}$, $a_j^{w_j} \in S_{c_s} \setminus S_{c_m}$ and $(a_i^{w_i}, a_j^{w_j}) \in E(\Delta)$. Since by induction $E(\Delta) \subset E(\Delta^{m-1})$, it follows that $(a_i^{v_i'}, a_j^{v_j'}) \in E(\Delta^{m-1})$, i.e. $[a_i^{c_{m-1} \cdots c_1}, a_j^{c_s \cdots c_1}]=1$. Again by induction, the marked occurrences cancel in $u_iu_j^{-1}$ if and only if they do in $v_i' {v_j'}^{-1}$. Hence, it follows that the marked occurrence $c_{s+1}$ does not cancel in $u_iu_j^{-1}$.

Since $[a_i^{w_i}, a_j^{w_j}]=1$, it follows that $[a_i^{u_i}, a_j^{u_j}]=1$ and so $[a_i^{u_iu_j^{-1}}, a_j]=1$. As we argued above, the marked occurrence $c_{s+1}$ does not cancel in $u_iu_j^{-1}$. Since $c_{s+1}, \dots, c_m$ is a path in the non-commutation graph $\overline \Delta$, we conclude that the marked occurrences $c_{s+1}, \dots, c_m$ do not cancel in $u_iu_j^{-1}$ and so they appear in the geodesic of $u_iu_j^{-1}$. It now follows from Lemma \ref{claim1}, that $c_m$ disjointly commutes with $a_j$. Since by induction hypothesis, we have that $E(\Delta) \subset E(\Delta^{m-1})$ and $c_m$ disjointly commutes with $a_j$, we conclude that $[a_i^{v_i'}, a_j^{v_j'}]=[a_i^{c_m \cdots c_1}, a_j^{c_s \cdots c_1}]=1$.

\medskip

Note that by construction $v_1'=c_m \cdots c_1$ and $v_2'=1$. We conclude that $[a_1^{v_1'}, a_2^{v_2'}] \ne 1$ and so $(a_1^{v_1'}, a_2^{v_2'}) \in E(\Delta') \setminus E(\Delta^{m})$. This proves that the induction hypothesis (IH3) holds for $s=m$.

We are left to show that the induction hypothesis (IH2) holds:

\begin{claim}\label{claim6}
If $a_i^{w_i} \in S_{c_s}$, then $u_i=u_i' c_s h_{s-1}^{(i)} c_{s-1} \cdots c_2 h_1^{(i)} c_1$ and the marked occurrences of $c_p$ cancel in $u_iu_j^{-1}$ if and only if they cancel in $v_i' {v_j'}^{-1}$ if and only if $p\le\min\{q(i,s),q(j,s)\}$.
\end{claim}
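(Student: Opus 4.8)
The plan is to prove the two assertions of Claim \ref{claim6}---the descending factorisation of $u_i$ and the characterisation of which marked occurrences cancel---by a single nested induction, the inner induction running over the index $p$ of the marked letters $c_p$ read off $u_i$ from right to left. Throughout I would lean on the nesting $S_{c_m}\subset\cdots\subset S_{c_1}$ established in Claim \ref{claim4}, so that the hypothesis $a_i^{w_i}\in S_{c_s}$ (whence $q(i,s)=s$, as $S_{c_{s+1}}=\emptyset$) automatically supplies a marked occurrence of every $c_p$ with $p\le s$, and likewise the factorisation of $u_j$ at its own level $q(j,s)$.

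First I would settle the factorisation $u_i=u_i'c_sh_{s-1}^{(i)}c_{s-1}\cdots c_2h_1^{(i)}c_1$. The case $p=1$ is immediate: membership in $S_{c_1}$ means precisely that $c_1$ right-divides $u_i$, and we mark that terminal occurrence, so $u_i=u_i'c_1$. For the inductive step I would unwind the definition of $S_{c_p}$, namely that the marked $c_p$ is the occurrence right-dividing $u_ic_1^{-1}g_1^{-1}\cdots c_{p-1}^{-1}g_{p-1}^{-1}$. Using the factorisation already obtained at level $p-1$ together with the cancellation statement (IH2) at level $p-1$, the marked letters $c_1,\dots,c_{p-1}$ of $u_i$ cancel against the appended $c_1^{-1},\dots,c_{p-1}^{-1}$, so the exposed right-divisor $c_p$ must lie to the left of the marked $c_{p-1}$ inside $u_i$; this produces the interposed word $h_{p-1}^{(i)}$ and advances the chain. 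This is the step where the factorisation and the cancellation assertion genuinely have to be carried along together.

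The second equivalence in the claim is then a direct computation. By construction $v_i'=c_{q(i,s)}\cdots c_1$ and $v_j'=c_{q(j,s)}\cdots c_1$, so $v_i'{v_j'}^{-1}=c_{q(i,s)}\cdots c_1c_1^{-1}\cdots c_{q(j,s)}^{-1}$ telescopes, the pair $c_pc_p^{-1}$ cancelling exactly for $p\le\min\{q(i,s),q(j,s)\}$ and the remaining letters surviving. Hence the marked $c_p$ cancels in $v_i'{v_j'}^{-1}$ if and only if $p\le\min\{q(i,s),q(j,s)\}$.

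The heart of the argument is to match this with cancellation in the genuine product $u_iu_j^{-1}$, and this is where I expect the main obstacle. Substituting the factorisations of $u_i$ and $u_j$, the terminal marked $c_1$ of $u_i$ and the leading $c_1^{-1}$ of $u_j^{-1}$ are adjacent and cancel whenever both vertices lie in $S_{c_1}$; peeling off $c_1$ brings $h_1^{(i)}$ and $(h_1^{(j)})^{-1}$ into contact. The inductive content is that, for $p\le\min\{q(i,s),q(j,s)\}$, these intermediate words do not obstruct the meeting of $c_p$ and $c_p^{-1}$: here I would invoke Lemma \ref{claim1} to show that any letter separating consecutive marked occurrences either itself cancels or disjointly commutes with the relevant conjugate, exactly as in the base case $m=1$ and in the proof of Claim \ref{claim5}. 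Conversely, once $p$ exceeds $\min\{q(i,s),q(j,s)\}$---say $a_j^{w_j}\notin S_{c_p}$, so by definition $c_p$ does not right-divide $u_jc_1^{-1}g_1^{-1}\cdots c_{p-1}^{-1}g_{p-1}^{-1}$---the marked $c_p$ of $u_i$ has no partner: using that $(c_m,\dots,c_1)$ is a path in the non-commutation graph $\overline\Gamma$, consecutive marked letters fail to commute, so $c_p$ cannot slide past the freshly exposed occurrences to reach a cancelling $c_p^{-1}$ deeper in $u_j$, and it therefore survives in the geodesic of $u_iu_j^{-1}$. Combining the two directions yields that the marked $c_p$ cancels in $u_iu_j^{-1}$ if and only if $p\le\min\{q(i,s),q(j,s)\}$, which together with the previous paragraph gives the full claim. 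The delicate point, requiring the most care, is controlling the words $h_p^{(i)},h_p^{(j)}$ via disjoint commutation so that the chain of cancellations propagates exactly up to $\min\{q(i,s),q(j,s)\}$ and stops there.
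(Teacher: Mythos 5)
Your overall architecture does match the paper's: a nested induction on $p$ that carries the factorisation of $u_i$ and (IH2) along together, the telescoping computation for $v_i'{v_j'}^{-1}$, and the use of the non-commutation path $(a_1,c_m,\dots,c_1,a_2)$ to block cancellation beyond $\min\{q(i,s),q(j,s)\}$. But there is a genuine gap at the central step, the forward direction of the cancellation equivalence in $u_iu_j^{-1}$. You propose to control the separating words $h_p^{(i)},h_p^{(j)}$ by invoking Lemma \ref{claim1} ``exactly as in the base case $m=1$ and in the proof of Claim \ref{claim5}''. Lemma \ref{claim1} has as a hypothesis that the conjugates $a^u$ and $b^w$ \emph{commute}, and in the base case and in Claim \ref{claim5} it is indeed applied only to pairs coming from edges of $\Delta$. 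Claim \ref{claim6}, however, is a statement about \emph{arbitrary} pairs $(i,j)$, including non-adjacent ones with $[a_i^{u_i},a_j^{u_j}]\ne 1$; in fact the bootstrap comparisons with the reference vertex $a_1$ (whose conjugator is $r$ itself) are precisely of this non-commuting kind. For such pairs Lemma \ref{claim1} simply does not apply, so your mechanism for bringing $c_p$ and $c_p^{-1}$ together has no support. The paper avoids any commutation hypothesis: from $a_k^{w_k}\in S_{c_p}$, i.e.\ $c_p$ right-divides $u_kc_1^{-1}g_1^{-1}\cdots c_{p-1}^{-1}g_{p-1}^{-1}$, combined with the factorisation, it extracts the commutation relations $[c_p,\,h_{p-1}^{(k)}\cdots h_1^{(k)}g_1^{-1}\cdots g_{p-1}^{-1}]=1$ for $k=i,j$, and then cancels $c_p$ against $c_p^{-1}$ directly in the identity $u_iu_j^{-1}=(u_ic_1^{-1}g_1^{-1}\cdots g_{p-1}^{-1})\,(u_jc_1^{-1}g_1^{-1}\cdots g_{p-1}^{-1})^{-1}$; the converse reverses the same chain: cancellation forces the commutation relation, which gives right-divisibility, hence membership in $S_{c_p}$.

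A related, smaller gap sits in your factorisation step. You assert that the exposed right-divisor $c_p$ ``must lie to the left of the marked $c_{p-1}$ inside $u_i$'', but right-divisibility of $u_ic_1^{-1}g_1^{-1}\cdots g_{p-1}^{-1}$ by $c_p$ could a priori be witnessed by an occurrence of $c_p$ inside the residue $h_{p-1}^{(i)}\cdots h_1^{(i)}g_1^{-1}\cdots g_{p-1}^{-1}$ rather than inside the prefix $u_i'$. The paper rules this out by noting that this residue commutes with $c_{p-1}$ (by the relations just derived) while $[c_{p-1},c_p]\ne 1$, so $c_p$ cannot occur in its geodesic at all; only then does the right-divisibility transfer to $u_i'$ and produce $h_{p-1}^{(i)}$. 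You clearly have the non-commutation ingredient in hand (you use it in the converse direction), but as written this step is unjustified. Replacing your appeal to Lemma \ref{claim1} by the right-divisibility-to-commutation mechanism closes both holes at once.
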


Let us first show that if $a_i^{w_i} \in S_{c_m}$, then $u_i=u_i'' c_m h_{m-1}^{(i)} c_{m-1} \cdots c_2 h_1^{(i)} c_1$.

Suppose that $a_i^{w_i} \in S_{c_m} \subset S_{c_{m-1}}$. It follows by induction that 
$$
u_i=u_i' c_{m-1} h_{m-2}^{(i)}\cdots c_2 h_1^{(i)} c_1.
$$ 

In this case, by construction we have that in $\Delta^{m-1}$, $v_i'=c_{m-1} \dots c_1$ and $v_1'=c_{m-1}\cdots c_1$. Since by induction the occurrences of $c_1, \dots, c_{m-1}$ cancel in $v_i'{v_1'}^{-1}$, then they also cancel in $u_i u_1^{-1}$, i.e. the occurrences $c_i$, $i=1, \dots, m-1$, cancel in 
$$u_i'c_{m-1} \cdots c_2 h_1^{(i)} c_1 c_1^{-1} g_1^{-1} c_2 \cdots c_{m-1}^{-1} g_m^{-1} c_m^{-1}.
$$
This implies that 
$$[c_k, h_{k-1}^{(i)} \cdots h_1^{(i)} g_1^{-1} \cdots g_{k-1}^{-1}]=1, \ k=2, \dots, m-1.
$$
In particular, since $[c_{m-1}, h_{m-1}^{(i)} \cdots h_1^{(i)} g_1^{-1} \cdots g_{m-1}^{-1}]=1$ and $[c_{m-1}, c_m] \ne 1$, it follows that $c_m$ does not occur in the geodesic $h_{m-1}^{(i)} \cdots h_1^{(i)} g_1^{-1} \cdots g_{m-1}^{-1}$.

Since $a_i^{w_i} \in S_{c_m}$, it follows from the definition that $c_m$ right-divides $u_i c_1^{-1} \cdots g_{m-1}^{-1}$; from the above discussion, it follows that $c_m$ right-divides $u_i' h_{m-1}^{(i)} \cdots h_1^{(i)} g_1^{-1} \cdots g_{m-1}^{-1}$ but since $c_m$ does not occur in the geodesic 
$h_{m-1}^{(i)} \cdots h_1^{(i)} g_1^{-1} \cdots g_{m-1}^{-1}$, it follows that $c_m$ right-divides $u_i'$, i.e. $u_i = u_i'' c_m h_{m-1}^{(i)} \cdots h_1^{(i)}$. 

\smallskip

Finally, let us  prove that the marked occurrences $c_p$ cancel in the product $u_i u_j^{-1}$ if and only if they cancel in the product $v_i' {v_j'}^{-1}$. Using the induction hypothesis, it suffices to prove that the marked occurrence of $c_m$ cancels in $u_iu_j^{-1}$ if and only if it does in $v_i' {v_j'}^{-1}$. By definition, the occurrence $c_m$ cancels in $v_i {v_j'}^{-1}$ if and only if $a_i^{w_i}, a_j^{w_j} \in S_{c_m}$. Hence, we have to show that $a_i^{w_i}, a_j^{w_j} \in S_{c_m}$ if and only if $c_m$ cancels in $u_iu_j^{-1}$. 

As argued before, if  $a_i^{w_i} \in S_{c_m}$, then $c_m$ right-divides 
$$
u_i'' c_m h_{m-1}^{(i)} \cdots h_1^{(i)} g_1^{-1} \cdots g_{m-1}^{-1}
$$ 
and so 
$$
[c_m, h_{m-1}^{(i)} \cdots h_1^{(i)} g_1^{-1} \cdots g_{m-1}^{-1}]=1.
$$

We have 
\begin{equation}
\begin{split}
& u_iu_j^{-1} = (u_i c_1^{-1} g_2^{-1} \cdots g_{m-1}^{-1}) {(u_j c_1^{-1} g_2^{-1} \cdots g_{m-1}^{-1})}^{-1}=\\
& u_i'' c_m h_{m-1}^{(i)} \cdots h_1^{(i)} g_1^{-1} \cdots g_{m-1}^{-1} g_{m-1} \cdots g_1 {h_1^{(j)}}^{-1} \cdots {h_{m-1}^{(j)}}^{-1} c_m^{-1} u_j''
\end{split}
\end{equation}
and since 
$$
[c_m, h_{m-1}^{(k)} \cdots h_1^{(k)} g_1^{-1} \cdots g_{m-1}^{-1}]=1,\ k=i,j,
$$
it follows that the marked occurrence of $c_m$ cancels. Conversely, if $c_m$ cancels, we have that 
$$
[c_m, h_{m-1}^{(i)} \cdots h_1^{(i)} g_1^{-1} \cdots g_{m-1}^{-1}]=1
$$ 
hence it right-divides $u_i c_1^{-1} \cdots g_{m-1}^{-1}$ and so $a_i^{w_i}$ belongs to $S_{c_m}$.

This proves the statement $\Phi(k,1)$.

\medskip

Let us prove the statement $\Phi(k,k')$. Recall that $E(\Delta') \setminus E(\Delta) =\{e_1, \dots, e_k\}$ and that $\Delta'$ is the extended commutation graph of $\{ a_i^{v_i'}\}$, where $v_i'$ are words composed from the (ordered) set of marked letters in $w_i$, $i=1, \dots, n$. By induction, assume that (IH1), (IH2) and (IH3) are satisfied for $l <k'$. Without loss of generality, assume that $e_{k'}=(a_1,a_2)$.

\smallskip

Since $(a_1^{w_1}, a_2^{w_2}) \notin E(\Delta)$, it follows that $[a_1^{w_1}, a_2^{w_2}]\ne 1$ and so $[a_1^{w_1w_2^{-1}}, a_2] \ne 1$. On the other hand, since $(a_1^{v_1'}, a_2^{v_2'}) \in E(\Delta')$, it follows that $[a_1^{v_1'}, a_2^{v_2'}]=1$ and so $[a_1^{v_1'{v_2'}^{-1}}, a_2] =1$. Since marked letters that do not cancel in $v_1'{v_2'}^{-1}$ (and so by induction, they do not cancel in $w_1w_2^{-1}$) commute with $a_2$, it follows that the double-coset representative $C(a_1) w_1w_2^{-1} C(a_2)$ of $w_1w_2^{-1}$ must contain a non-marked letter. This means that if  $w_1w_2^{-1}=D_1 r D_2$ where $D_i$ is the maximal divisor of $w_1w_2^{-1}$ from $C(a_i)$, then $r$ contains a non-marked letter. As in the base case, there exist $c_m, \dots, c_1$ such that $r= g_m c_m \cdots g_1 c_1 g_0$, $c_1$ right-divides $r$, $c_m$ left-divides $r$, the tuple $(a_1, c_m, \dots, c_1, a_2)$ defines a path in the non-commutation graph $\overline \Gamma$, $c_i \ne c_j$ for all $i\ne j$, and at least one $c_i$ is not marked. Among such paths, we choose a shortest one and mark the right most occurrences of the letters $c_i$ in $r$, $i=1, \dots, m-1$. Since there are no repetitions of letters in the path, it follows that we have marked at most $m$ new letters and so $m \le |V(\Gamma)|$. Since by induction, the number of marked letters up to step $l=k'-1$ is less than $(k'-1)|V(\Gamma)|$, it follows that at step $k'$ there are at most $k'|V(\Gamma)|$ marked letters.

Let $u_i a_i u_i^{-1}$ be a geodesic word corresponding to the element $a_i^{w_i w_2^{-1} D_2^{-1}}$. As in the base case, we have that $u_2=1$ and $u_1=r$, $c_1$ right-divides $u_1$ and so $u_1=u_1' c_1$. Similarly, if $v_1'{v_2'}^{-1}= d_1 t d_2$, where $d_i$ is the maximal divisor from $C(a_i)$, we define $t_i$ so that the word $a_i^{v_i' {v_2'}^{-1}d_2^{-1}}$ is geodesic. By induction, it follows that $t_i$ are words defined by the marked letters in $u_i$ and so in particular $t_2=1$.

As in the proof of statement $\Phi(k,1)$, we proceed by induction on $m$ and $s$.

We establish the base of induction: we consider simultaneously the cases $m=s=1$ and $s=1$ at step $m$.

Let $S_{c_1}$ be the set of vertices of $\Delta$ such that $c_1$ right-divides $u_i$. If $c_1$ is a marked occurrence, we proceed to $c_2$. If $c_1$ is not a marked occurrence, we mark it at this step.

As in the case $\Phi(k,1)$, see Claim \ref{claim2}, any path in $\Delta$ connecting a vertex in $S_{c_1}$ to the vertex $a_2^{w_2}$ contains a vertex $v$ such that $v\ne a_2^{w_2}$ and $v \notin S_{c_1}$.

Define the graph $\Delta^{1}$ to be the extended commutation graph of the set $\{ a_i^{w_i'}\}$, where  $w_i'$ is the word defined by the marked occurrences, that is $w_i'=v_i'$ if  $a_i^{w_i} \notin S_{c_1}$ and $w_i'=v_{i1}' c_1 v_{i_2}'$ otherwise, here $v_i'=v_{i1}'v_{i2}'$ and $c_1$ right-divides $w_i'$.

As in the case $\Phi(k,1)$, see Claim 2, we have that $E(\Delta) \subset E(\Delta^{1})$, i.e. if $[a_i^{u_i}, a_j^{u_j}]=1$, then $[a_i^{w_i'}, a_j^{w_j'}]=1$. Furthermore, it follows from the construction that the occurrence $c_1$ cancels in $w_i' {w_j'}^{-1}$ if and only if the corresponding occurrence cancels in $u_iu_j^{-1}$ (and the former occurs if and only if $a_i^{w_i}, a_j^{w_j} \in S_{c_1}$). Therefore the induction hypothesis (IH1) and (IH2) hold.

Recall that $u_1=r= c_m \cdots g_2 c_2 g_1 c_1$. If $m=1$, then by definition $a_1 c_1 a_2$ defines a path in the non-commutation graph $\overline \Delta$. In this case, we have that $[a_1^{w_1'},a_2^{w_2'}]=[a_1^{v_{11}'c_1v_{12}'}, a_2] \ne 1$, or equivalently, $E(\Delta^{1}) \subsetneq E(\Delta')$ and so the induction hypothesis (IH3) is also satisfied.

Let us prove the induction hypothesis for $s\le m$ at step $m$. As in the base case, we assume that $s=m$.

Let $S_{c_m}$ be the set of vertices of $\Delta$ such that $c_m$ right-divides $u_i c_1^{-1} g_1^{-1} \cdots c_{m-1}^{-1} g_{m-1}^{-1}$. In particular, $v_1 \in S_{c_m}$. As in the case $\Phi(k,1)$, see Claim 3, $S_{c_m} \subset S_{c_{m-1}} \subset \dots \subset S_{c_1}$.

Let $\Delta^{m}$ be the extended commutation graph induced of $\{ a_i^{w_i'}\}$, where $w_i'$ are defined as follows. If $c_m$ is an occurrence marked in the previous steps, then we define $w_i'= v_i'$ where $v_i'$ is the conjugator in $\Delta^{m-1}$; if $c_m$ is not a marked occurrence, we mark it and define $w_i'=v_{i1}'c_m v_{i2}'$ if $a_i^{w_i} \in S_{c_m}$; and otherwise, $w_i'=v_i'$ where $v_i'=v_{i1}'v_{i2}'$.

We now show that $E(\Delta) \subset E(\Delta^{m})$. The statement follows immediately by induction if both $a_i^{w_i}$ and $a_j^{w_j}$ simultaneously belong or do not belong to $S_{c_m}$ or if $c_m$ was a marked occurrence. Assume without loss of generality that $c_m$ was not marked, $a_i^{w_i} \in S_{c_m}$, $a_j^{w_j} \in S_{c_s} \setminus S_{c_m}$ and $(a_i^{w_i}, a_j^{w_j}) \in E(\Delta)$. Since by induction $E(\Delta) \subset E(\Delta^{m-1})$, it follows that $(a_i^{v_i'}, a_j^{v_j'}) \in E(\Delta^{m-1})$. Again by induction, we have that marked occurrences cancel in $u_iu_j^{-1}$ if and only if they do in $v_i' {v_j'}^{-1}$. Hence, it follows that the marked occurrence $c_{s+1}$ does not cancel in $u_iu_j^{-1}$.

Since $[a_i^{w_i}, a_j^{w_j}]=1$ it follows that $[a_i^{u_i}, a_j^{u_j}]=1$ and so $[a_i^{u_iu_j^{-1}}, a_j]=1$. As we argued above, the marked occurrence $c_{s+1}$ does not cancel in $u_iu_j^{-1}$. Since $c_{s+1}, \dots, c_m$ is a path in the non-commutation graph $\overline \Delta$, we conclude that the marked occurrences $c_{s+1}, \dots, c_m$ do not cancel in $u_iu_j^{-1}$ and so, in particular, they appear in the geodesic of $u_iu_j^{-1}$. It follows from Lemma \ref{claim1}, that $c_m$ disjointly commutes with $a_j$. Since by induction hypothesis, we have that $E(\Delta) \subset E(\Delta^{m-1})$ and $c_m$ disjointly commutes with $a_j$, we conclude that $[a_i^{v_i'}, a_j^{v_j'}]=1$. Hence, the induction hypothesis (IH1) holds.

Furthermore, since by construction $w_1'=v_{im}'d_m \cdots d_1 v_{i0}'$, where $v_i'=v_{im}' \cdots v_{i0}'$, $d_i$ are the new marked letters and $v_2'=1$, we have that $[a_1^{w_1'}, a_2^{w_2'}] \ne 1$ and so $(a_1^{w_1'}, a_2^{w_2'}) \in E(\Delta') \setminus E(\Delta^{m})$. The induction hypothesis (IH3) is satisfied.

As in the case $\Phi(k,1)$, we have that if $a_i^{w_i} \in S_{c_m}$, then $u_i=u_i'' c_m h_{m-1}^{(i)} c_{m-1} \cdots c_2 h_1^{(i)} c_1$.

Let us finally prove that the marked occurrences of $c_i$ cancel in the product $u_i u_j^{-1}$ if and only if they cancel in the product $v_i' {v_j'}^{-1}$. Using the induction hypothesis, it suffices to prove that the new marked occurrence of $c_m$ cancels in $u_iu_j^{-1}$ if and only if it does in $v_i' {v_j'}^{-1}$. By construction, the occurrence $c_m$ cancels in $v_i {v_j'}^{-1}$ if and only if $a_i^{w_i}, a_j^{w_j} \in S_{c_m}$. Hence, we have to show that $a_i^{w_i}, a_j^{w_j} \in S_{c_m}$ if and only if $c_m$ cancels in $u_iu_j^{-1}$. The proof is analogous to that of Claim \ref{claim6}. This shows that the induction hypothesis (IH2) holds.

\medskip

Since $|E(\Delta') \setminus E(\Delta)| \le \frac{n (n-1)}{2}$, it follows that there are most $k=\frac{n (n-1)}{2}$ steps; since at each step we mark at most $M=|V(\Gamma)|$ letters, we conclude that $|w_i'| \le \frac{n (n-1) M }{2} < n^2 M$. Therefore, if the graph $\Delta$ is connected (the number of connected components $K$ is 1), we have that $\Delta < B(\Gamma^e, R')$, where $R'= n^2 M$. 

\medskip

Let $\Delta$ be the disjoint union of $K$ connected graphs $\Lambda_1, \dots, \Lambda_K$. We prove the statement by induction on $K$. If $K=1$, then the graph $\Delta$ is connected and the statement follows from the above discussion. Let $\Delta_1$ be the disjoint union of $K-1$ connected components $\Lambda_i$, $i=1, \dots, K-1$ and $\Delta_2 = \Lambda_K$. 

If $\Gamma$ is a join, then $\GG(\Gamma)$ is the direct product, namely $\GG(\Gamma) = \GG(\Gamma_1) \times \GG(\Gamma_2)$. In this case $\Gamma^e$ is also a join of $\Gamma_1^e$ and $\Gamma_2^e$. Consider the decomposition of the set of vertices $V(\Delta)$ as the union of $S_1$ and $S_2$ such that the graph embedding $f: \Delta \to \Gamma^e$ maps $f(S_1) \subset V(\Gamma_1^e)$ and $f(S_2) \subset V(\Gamma_2^e)$. Since $\Gamma^e$ is a join, so is the graph induced by $f(S_1) \cup f(S_2)$ in $\Gamma^e$ (provided that  $S_1$ and $S_2$ are non-empty). Since by assumption $\Delta=S_1 \cup S_2$ is disconnected and so, in particular, it is not a join, it follows that either $S_1$ or $S_2$ is empty. Therefore $\Delta$ embeds in either $\Gamma_1$ or $\Gamma_2$. Therefore, without loss of generality we can assume that $\Gamma$ is not a join. 

Assume by induction that the lemma holds for graphs with less than $K$ connected components, i.e. $\Delta_i$ embeds into the ball of radius $R_i$ in $\Gamma^e$, $i=1,2$, correspondingly. Let $f_1$ and $f_2$ be the corresponding embeddings. Define $f: \Delta \to \Gamma^e$ as follows:  
$$
f(v)= \left\{ 
\begin{array}{ll}
f_1(v),& \hbox{if } v\in \Delta_1 \\
f_2(v)^w, & \hbox{if } v \in \Delta_2,
\end{array} \right.
$$
where $w= (y_1\cdots y_M)^{(M+R_1+R_2)}$. Note that $f$ is well-defined since the conjugate $f_2(v)^w$ is again a conjugate of a generator and so it is a vertex in $\Gamma^e$. We claim that $f$ is an embedding. Indeed, since by assumption $f_1$ and $f_2$ are graph embeddings, it suffices to show that the set of vertices $V(f(\Delta_1)$ and $V(f(\Delta_2))$ are disjoint, that is $V(f(\Delta_1)) \cap V(f(\Delta_2))=\emptyset$, and that there are no edges between vertices in $f(\Delta_1)$ and vertices in $f(\Delta_2)$. 

Let $y_i^{v_i}$ be the image of a vertex in $\Delta_1$ and $y_j^{v_jw}$ be the image of a vertex in $\Delta_2$. Since by assumption $y_i^{v_i}$ and $y_j^{v_j}$ belong to the balls of radius $R_1$ and $R_2$ respectively and the length of $w$ is $M(M +R_1+R_2)$, it follows that $(y_1\cdots y_M)^M$ is a subword of the cyclically reduced part of $v_jwv_i^{-1}$, i.e. $v_jwv_i^{-1} = u_1 (y_1\cdots y_M)^M u_2$. It now follows from \cite{CK1} that $(y_1 \cdots y_M)$ is a subword of the geodesic of $(y_j^{u_1})^{(y_1\cdots y_M)^M}$ and so it is a subword of the geodesic of $y_j^{v_jwv_i^{-1}}$. Hence, $y_i \ne y_j^{v_jwv_i^{-1}}$ and so $V(f(\Delta_1)) \cap V(f(\Delta_2))=\emptyset$. 

By definition, a vertex $y_i^{v_i}$ in the image of $\Delta_1$ is connected by an edge to a vertex $y_j^{v_jw}$ in the image of $\Delta_2$ if and only if they commute, that is $[y_i^{v_i}, y_j^{v_jw}]=1$ in $\GG(\Gamma)$ or, equivalently, $[y_i, y_j^{v_jwv_i^{-1}}]=1$. However, it follows from the description of centralisers of elements in a pc group, see Theorem \ref{thm:centr}, that if $[y_i, y_j^{v_jwv_i^{-1}}]=1$ and $(y_1 \cdots y_M)$ is a subword of the geodesic of $y_j^{v_jwv_i^{-1}}$, then $y_1, \dots, y_M \in C(y_i)$. Since $y_1, \dots, y_M$ generate $\GG(\Gamma)$, if $y_1, \dots, y_M \in C(y_i)$, then $y_i$ is central - this contradicts the fact that $\Gamma$ is not a join. We deduce that $[v_jwv_i^{-1}, y_i]\ne 1$ and so $\Delta$ embeds into the ball of radius $M(M+R_1+R_2)+R_2$. By induction, we have that $R_1 < 2(K-1)n^2M^K$; since $\Delta_2$ is connected, we have shown that $R_2 < n^2M$. In particular, we have that $R_1 + R_2 < (2K-1) n^2 M^K$. We conclude that $R< M(M+R_1 +R_2)+R_2 < 4Kn^2 M^{K+1}$ and $\Delta$ embeds in $B(\Gamma^e, R)$. This finishes the proof of the theorem.

\end{proof}

\begin{cor}\label{cor:decEGE}
There is an algorithm that given two simplicial graphs $\Delta$ and $\Gamma$ decides whether or not $\Delta$ is an induced subgraph of $\Gamma^e$.
\end{cor}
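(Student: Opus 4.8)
The plan is to read the algorithm directly off Theorem \ref{thm:boundembd}. Given the two finite input graphs $\Delta$ and $\Gamma$, I would first compute the combinatorial data $n=|V(\Delta)|$, $M=|V(\Gamma)|$ and the number $K$ of connected components of $\Delta$; all three are extracted from the graphs by inspection, so the bound $R=4Kn^{2}M^{K+1}$ is an explicit natural number. The whole point is that Theorem \ref{thm:boundembd} reduces the infinite, non-locally-finite question ``is $\Delta$ an induced subgraph of $\Gamma^{e}$?'' to the same question for the \emph{finite} induced subgraph $B(\Gamma^{e},R)$: if $\Delta<\Gamma^{e}$ then $\Delta<B(\Gamma^{e},R)$, and the converse is immediate because $B(\Gamma^{e},R)$ is by definition an induced subgraph of $\Gamma^{e}$. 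Hence it suffices to build $B(\Gamma^{e},R)$ effectively and then test induced-subgraph containment inside it.

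To construct $B(\Gamma^{e},R)$ I would enumerate all words $v\in\GG(\Gamma)$ with $|v|\le R$; since $\Gamma$ has $M$ vertices this is a finite list, and the candidate vertex set $\{\,y_i^{v}\mid i=1,\dots,M,\ |v|\le R\,\}$ is finite. To turn this list of formal conjugates into the actual vertex set of $B(\Gamma^{e},R)$, I would use the solvability of the word problem in pc groups (available from the geodesic normal-form machinery of \cite{EKR} and \cite{CK1}): two candidates $y_i^{v}$ and $y_j^{v'}$ determine the same vertex exactly when $y_i^{v}=y_j^{v'}$ holds in $\GG(\Gamma)$, which is a decidable equality. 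The edge relation is decidable for the same reason, since $y_i^{v}$ and $y_j^{v'}$ are adjacent precisely when $[y_i^{v},y_j^{v'}]=1$ in $\GG(\Gamma)$. Running these two tests over all finitely many candidate pairs produces the finite graph $B(\Gamma^{e},R)$ explicitly.

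With $B(\Gamma^{e},R)$ in hand, deciding whether $\Delta$ is an induced subgraph is a purely finite combinatorial search: I would range over all injective maps $V(\Delta)\to V(B(\Gamma^{e},R))$ and, for each, check that it both preserves and reflects adjacency. Since both graphs are finite this terminates, and by the equivalence above the output answers the original question, proving the corollary.

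I do not expect a genuine obstacle in this corollary; essentially all the difficulty has already been absorbed into Theorem \ref{thm:boundembd}, which supplies the explicit, computable radius $R$. The only steps requiring care are the effective construction of $B(\Gamma^{e},R)$ and the identification of equal vertices, and both rest squarely on the decidability of the word problem in pc groups. The exponential dependence of $R$ on $K$ renders the procedure wildly impractical, but this is irrelevant to the statement, which asserts decidability only.
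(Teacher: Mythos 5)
Your proof is correct and matches the paper's approach exactly: the paper derives Corollary \ref{cor:decEGE} immediately from Theorem \ref{thm:boundembd} without further argument, and you have simply spelled out the routine effectivity details (computing $R$, constructing the finite ball $B(\Gamma^e,R)$ via the decidable word problem in $\GG(\Gamma)$, and a finite induced-subgraph search) that the paper leaves implicit. No gaps.
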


\bigskip

The Embedding Problem (coding) between partially commutative monoids (trace monoids) is particularly interesting in the context of concurrent systems and the existence of uniquely decipherable morphisms and it was intensively studied in the 80's. In \cite{Kunc}, Kunc proves that the Coding Problem for pc monoids is undecidable, or in other words, there is no algorithm to decide whether or not there exists an embedding between two given trace monoids. However, if one restricts to the case when the defining graph of the source pc monoid is the complement of a forest, then the Embedding Problem is decidable.

The decidability of the (Tame) Embedding Problem for partially commutative groups is still an open question, see Question \ref{q4}. We will return to this question in Section \ref{sec:universal}, where we present some facts that bring us to conjecture, see Conjecture \ref{conj:dectameemb}, that the Tame Embedding Problem for pc groups is decidable. 

\medskip

A closely related problem on embeddability asks if a given map between partially commutative groups is an embedding. In the case of monoids this is a well-known problem called the \emph{trace code problem}. The trace code problem for pc monoids is known to be undecidable even for tame morphisms, see \cite{Dikert-Metivier}. In the context of partially commutative groups an analogous question is also open.
\begin{question}
Does there exist an algorithm to decide whether or not a given map between partially commutative groups is an embedding?
\end{question}

The above question in the context of groups is more subtle than the trace code problem for monoids. It is well-known, see \cite{Dikert-Metivier}, that there is no algorithm to decide whether or not a morphism from a free monoid to the direct product of free monoids is an embedding. However, in the case of groups such an algorithm exists. Notice that a homomorphism $\varphi$ from the free group $F_n$ to the direct product of free groups $F_n^{(1)}\times F_n^{(2)}$ is an embedding if and only if $\varphi \pi_i: F_n \to F_n^{(i)}$ is an embedding for (at least one) $i \in \{1,2\}$, where $\pi_i$ is the canonical projection from $F_n^{(1)} \times F_n^{(2)}$ to its factor $F_n^{(i)}$. Indeed, assume that both homomorphisms $\varphi \pi_i$ are not injective, $i=1,2$. Let $w_i \in F_n$ be such that $w_i \ne 1$ and $\varphi\pi_i(w_i)=1$, $i=1,2$. If $w_1 = w_2$, then $\varphi(w_1)=(1,1)$ and so $\varphi$ is not injective. If $w_1 \ne w_2$, we have that $[w_1, w_2] \ne 1$, but 
$$
\varphi([w_1,w_2])=[(1,\varphi\pi_2(w_1)), (\varphi\pi_1(w_2),1)]=1
$$ and so $\varphi$ is not injective. In turn, $\varphi\pi_i$ is an embedding if and only if the image is a free group of rank $n$ and this can be decided algorithmically.

\subsection{Conditions on the target pc group}

In the previous section, we proved that the Extension Graph Embedding Problem is decidable. This motivates one to look for conditions on the graphs for which the (Tame) Embedding and Graph Embedding Problems are equivalent, since one can then deduce the decidability of the (Tame) Embedding Problem. 

In this section we describe conditions on the target pc group that assure this equivalence. More concretely, we show that if the deflation graph of $\Gamma$ (see Definition \ref{def:defl}) is triangle-free then the Tame Embedding and the Extension Graph Embedding Problems  are equivalent. This result plays a crucial role in our proof that there is an algorithm to decide whether or not an arbitrary pc group is universally equivalent to a $2$-dimensional one (see Corollary \ref{cor:decunivth2dpc}).

\medskip

We begin by recalling an already known condition on the defining graph of the target pc group that assures the equivalence of the three Embedding Problems. The first result in this direction was proven by Kim and Koberda. In \cite{KK}, the authors show that the Embedding Problems are equivalent for $2$-dimensional pc groups, that is:

\begin{thm}[see \cite{KK}]\label{thm:KKtfree}
If $\Gamma$ is triangle-free, then the three Embedding Problems are equivalent, namely, there exists an embedding from $\GG(\Delta)$ to $\GG(\Gamma)$ if and only if there exists a tame embedding if and only if $\Delta$ is an induced subgraph of $\Gamma^e$.
\end{thm}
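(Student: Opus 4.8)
The statement asserts the equivalence of three conditions, so the plan is to dispatch the cheap implications first and then concentrate on the single substantive arrow. By Definition~\ref{def:tameemb} an extension graph embedding is a special case of a tame embedding, and a tame embedding is in particular an embedding; moreover \cite{KK} records that an induced-subgraph inclusion $\Delta<\Gamma^e$ gives rise to a group embedding $\GG(\Delta)\to\GG(\Gamma)$. Hence
\[
\Delta<\Gamma^e \;\Longrightarrow\; \text{a tame embedding exists} \;\Longrightarrow\; \text{an embedding exists}
\]
holds with \emph{no} hypothesis on $\Gamma$, purely from the inclusions of the three classes of maps (Problems~\ref{prob:EP}, \ref{prob:EGEP}, \ref{prob:TEP}). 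The whole content of the theorem is therefore the remaining implication: if $\GG(\Delta)$ embeds in $\GG(\Gamma)$ and $\Gamma$ is triangle-free, then $\Delta$ is an induced subgraph of $\Gamma^e$. This is the only place triangle-freeness is used.

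To prove that arrow I would first normalise the embedding. Using the result recalled just before Definition~\ref{def:tameemb}, I may assume the embedding $\varphi$ is in the nice form $\varphi(x_i)=y_{i1}\cdots y_{ir_i}$, where the $y_{ij}=g_{ij}^{-1}{x_{ij}'}^{n_{ij}}g_{ij}$ are pairwise commuting powers of conjugates of generators; write $v_{ij}=g_{ij}^{-1}x_{ij}'g_{ij}\in V(\Gamma^e)$ for the underlying vertices. The next key point is that $\Gamma^e$ is itself triangle-free when $\Gamma$ is (this is a lemma of \cite{KK}; it also follows from the Centraliser Theorem~\ref{thm:centr}, since in a triangle-free graph the neighbours of a vertex span a free subgroup, so three conjugates of generators cannot pairwise commute). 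Consequently the distinct vertices $v_{i1},\dots,v_{ir_i}$ underlying a single image $\varphi(x_i)$ are pairwise adjacent in $\Gamma^e$ and hence form a clique of size at most $2$, so $r_i\le 2$.

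The crucial reduction is then to upgrade this to $r_i=1$ for all $i$, that is, to produce an embedding each of whose images is a single conjugate of a power of a generator. Once this is achieved the theorem is essentially immediate: setting $v_i=v_{i1}$, the map $x_i\mapsto v_i$ is an induced embedding $\Delta\hookrightarrow\Gamma^e$. Indeed, if $x_i\sim x_j$ in $\Delta$ then $\varphi(x_i),\varphi(x_j)$ commute, forcing $v_i,v_j$ adjacent; and if $x_i\not\sim x_j$ then $\varphi(x_i),\varphi(x_j)$ do not commute, so $v_i,v_j$ are non-adjacent and in particular distinct, because otherwise $\langle\varphi(x_i),\varphi(x_j)\rangle$ would be cyclic while $\langle x_i,x_j\rangle\cong\BZ^2$ injects, a contradiction (here Lemma~\ref{lem:1} and Theorem~\ref{thm:centr} guarantee distinct generators yield distinct vertices).

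The reduction to single factors is the main obstacle, and it is precisely where the bound $r_i\le 2$ and the triangle-free structure of $\Gamma^e$ must be exploited. My line of attack would be to choose, among all nice-form embeddings, one minimising the total geodesic length $\sum_i|\varphi(x_i)|$, and to show that minimality forbids a genuine two-factor image $\varphi(x_i)=y_{i1}y_{i2}$: the two commuting vertices $v_{i1},v_{i2}$ span an edge of $\Gamma^e$, and an analysis of the conjugators via Lemma~\ref{claim1} and the disjoint-commutation calculus should allow one factor to be discarded (or the conjugators shortened) without destroying injectivity, contradicting minimality. Verifying that such a replacement preserves injectivity — simultaneously across all the commutation and non-commutation relations of $\Delta$ — is the delicate technical heart of the argument and is where I expect the real work of \cite{KK} to lie; the triangle-freeness lemma and the two trivial implications are routine by comparison.
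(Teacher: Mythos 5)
First, note that the paper itself gives no proof of this statement: it is imported from \cite{KK} (hence the attribution ``see \cite{KK}''), so the comparison can only be against what your sketch actually establishes. Your bookkeeping of the cheap arrows is correct and matches the paper's framing: $\Delta<\Gamma^e$ yields an extension graph embedding, which is tame by Definition~\ref{def:tameemb}, which is an embedding, all with no hypothesis on $\Gamma$; your identification of the single substantive arrow (embedding $\Rightarrow\Delta<\Gamma^e$) is right, as is the normalisation to nice form, the bound $r_i\le 2$ via triangle-freeness of $\Gamma^e$, and the closing step from $r_i=1$ to an induced copy of $\Delta$ (which correctly uses Lemma~\ref{lem:1}, Theorem~\ref{thm:centr} and the fact that $C(w)=C(\sqrt{w})$).

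The genuine gap is exactly the step you flag yourself: upgrading $r_i\le 2$ to $r_i=1$, which is the entire content of the theorem, and your proposed mechanism for it does not work as stated. If $\varphi(x_i)=y_{i1}y_{i2}$ and you ``discard one factor,'' the resulting assignment need not be a homomorphism at all: $y_{i1}$ alone must still commute with $\varphi(x_j)$ for \emph{every} $x_j$ adjacent to $x_i$, and nothing in the minimality setup guarantees this — injectivity is not the only thing at risk, so a length-minimal embedding gives you no contradiction from the mere existence of a two-factor image. Likewise ``shortening the conjugators'' does not engage with two genuinely distinct commuting factors. Moreover, any surgery argument of this kind must use triangle-freeness somewhere beyond the clique bound $r_i\le 2$, since the implication is false for general $\Gamma$ (the paper cites the counterexamples of \cite{CDK}), and your sketch never identifies where the hypothesis re-enters. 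It is instructive to compare with the one theorem of this type the paper does prove, Theorem~\ref{thm:defltfree}: there the induction eliminating a two-class image assumes the embedding is \emph{tame} from the start, exploits the universal sentence of Lemma~\ref{lem:universalprop} to transfer the triangle-free condition to $\Delta$, uses the leaf structure of the deflation graph, and replaces a generator by a conjugate $v_3^{v_1}$ inside an isomorphic subgroup $\GG(W)$ rather than truncating an image — a different and more delicate mechanism than length-minimisation, and one that still does not cover the passage from an arbitrary embedding to a tame one, which is precisely what \cite{KK} supplies in the triangle-free case. So your proposal maps the terrain correctly but leaves the theorem's essential content unproven.
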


In \cite{CDK}, an analogous result was proven for triangle-built graphs, namely

\begin{thm}[see \cite{CDK}]\label{thm:CDKtbuilt}
If $\Gamma$ is triangle-built, then the three Embedding Problems are equivalent: there exists an embedding from $\GG(\Delta)$ to $\GG(\Gamma)$ if and only if there exists a tame embedding if and only if $\Delta$ is an induced subgraph of $\Gamma^e$.
\end{thm}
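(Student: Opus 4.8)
The plan is to isolate the single non-trivial implication and then exploit the combinatorial rigidity of triangle-built graphs. By Definition \ref{def:tameemb}, an extension graph embedding is exactly a tame embedding with $r_i=1$ for every $i$, and a tame embedding is by construction an embedding; recall also that $\Delta$ is an induced subgraph of $\Gamma^e$ precisely when an extension graph embedding $\GG(\Delta)\to\GG(\Gamma)$ exists. Hence, for an arbitrary graph $\Gamma$, the property $\Delta<\Gamma^e$ yields a tame embedding, which in turn yields an embedding. It therefore suffices to prove the reverse: when $\Gamma$ is triangle-built, every embedding $\varphi\colon\GG(\Delta)\to\GG(\Gamma)$ forces $\Delta$ to be an induced subgraph of $\Gamma^e$.

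First I would replace $\varphi$ by a nice embedding, using the Kim--Koberda structure result recalled just before Definition \ref{def:tameemb}: after this reduction, each generator $x_i$ of $\GG(\Delta)$ satisfies $\varphi(x_i)=y_{i1}\cdots y_{ir_i}$, a product of pairwise commuting conjugates of powers of generators of $\GG(\Gamma)$. The goal is to upgrade this to an embedding with $r_i=1$ for all $i$, since such a map is an extension graph embedding and hence witnesses $\Delta<\Gamma^e$. The factors $y_{i1},\dots,y_{ir_i}$ lie in a common abelian subgroup, so by the Centraliser Theorem \ref{thm:centr} together with the block decomposition they are governed by the clique structure of $\Gamma$: a family of $r_i$ distinct pairwise commuting conjugates of generators spans a clique in $\Gamma^e$, whose clique structure mirrors that of $\Gamma$ (see \cite{KK}).

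The heart of the argument is then to analyse the entire commuting family $\{y_{ij}\}$ across all $i$ and to collapse each product to a single conjugate of a generator. This is where the triangle-built hypothesis is essential: the forbidden induced subgraphs (squares and the relevant paths) are exactly the configurations producing the counterexamples of \cite{CDK}, in which a product of several commuting conjugates is genuinely unavoidable; ruling them out lets one read off the commutation pattern of the family from an induced subgraph of $\Gamma^e$ and select the factors coherently. I expect the main obstacle to be this final selection step: one must verify that collapsing the products preserves the full commutation graph of $\Delta$, and in particular that whenever $x_i$ and $x_k$ fail to commute in $\GG(\Delta)$ the chosen single conjugates representing their images still fail to commute in $\GG(\Gamma)$. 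Controlling this non-commutation, via divisibility and the centraliser description, is precisely where triangle-built-ness is invoked and where the equivalence breaks down in general; once it is in hand the resulting map is an extension graph embedding and the theorem follows. As these details are carried out in \cite{CDK}, I would conclude by citing that reference.
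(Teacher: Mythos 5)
Your proposal is correct and matches the paper's treatment: the paper states this theorem with the attribution ``see \cite{CDK}'' and offers no proof of its own, importing the hard implication (embedding $\Rightarrow$ $\Delta < \Gamma^e$ for triangle-built $\Gamma$) wholesale from that reference, exactly as you do after correctly disposing of the routine implications (extension graph embedding $\Rightarrow$ tame embedding $\Rightarrow$ embedding, and the equivalence of $\Delta < \Gamma^e$ with the existence of an extension graph embedding via the Kim--Koberda construction and the centraliser description $C(w)=C(\sqrt{w})$). Since the essential content resides in \cite{CDK} in both cases, your sketch of how triangle-built-ness should enter is a reasonable gloss but carries no proof burden here.
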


We now introduce two operations on a graph $\Gamma$: the $n$-inflation and the deflation of $\Gamma$. Informally, the $n$-inflation graph ``clones'' vertices of $\Gamma$ and so the associated pc group has the property that for every generator, there are $n-1$ different generators with the same centraliser. On the other hand, the deflation graph is a quotient graph that identifies vertices with the same star or, on the group level, it identifies generators with the same centraliser. More formally,
\begin{defn}[Inflation graph]\label{def:infl}
Let $\Gamma$ be a simplicial graph and $n \in \BN$. The $n$-\emph{inflation of} $\Gamma$, denoted by $\infl(\Gamma)$, is the following graph:
\begin{itemize}
\item for every vertex $v\in V(\Gamma)$ we introduce $n$-vertices $v_1, \dots, v_n \in V(\infl(\Gamma))$  and so $|V(\infl(\Gamma))|= n |V(\Gamma)|$; 
\item set $(v_i, v_j) \in E(\infl(\Gamma))$ for all $1\le i < j \le n$, i.e. the $n$-vertices associated to a vertex in $\Gamma$ define a clique in the inflation of $\Gamma$; we further set $(v_i, w_k) \in E(\infl(\Gamma))$ if and only if $(v,w) \in E(\Gamma)$. 
\end{itemize}
Obviously, the $1$-inflation graph coincides with the graph $\Gamma$. Furthermore, it is immediate that $\Gamma$ is an induced subgraph of its $n$-inflation.

The pc group $\GG(\infl(\Gamma))$ defined by the inflation graph is the graph product with the underlying graph $\Gamma$ and vertex groups isomorphic to the free abelian group of rank $n$.
\end{defn}

Recall that the graph product of groups is defined as follows. Let $G_i=\langle X_i\mid R_i\rangle$, $i=1,\dots, k$ be groups and let $\Gamma$ be a simplicial graph with vertices $v_1,\dots, v_k$. Then, the  graph product $mathcal G=\mathcal G(\Gamma;G_1,\dots,G_k)$ of groups $G_1,\dots, G_k$ with the underlying graph $\Gamma$, is a group with a presentation of the form
$$
\langle X_1,\dots, X_k\mid R_1,\dots, R_k, \mathcal{R}\rangle,
$$
where $\mathcal{R}=\{[x_i,x_j]\mid x_i \in X_i, x_j \in X_j \hbox{ and } (v_i,v_j) \in E(\Gamma)\}$.

\begin{defn}[Deflation graph]\label{def:defl}
Let $\Gamma$ be a simplicial graph. Given $v\in V(\Gamma)$, we define the \emph{star} $\St(v)$ of $v$ to be the set of vertices $\{v' \in V(\Gamma) \mid (v,v') \in E(\Gamma) \} \cup \{v\}$.

We say that two vertices $v,w\in V(\Gamma)$ are \emph{equivalent} if $\St(v)=\St(w)$ and write $v \sim w$. It is immediate to check that $\sim$ is an equivalence relation. On the level of groups, we have $v \sim w$ if and only if the centralisers $C(v)$ and $C(w)$ coincide.

The \emph{deflation of} $\Gamma$, denoted by $\defl(\Gamma)$ is the graph defined as follows:
\begin{itemize}
\item vertices in $\defl(\Gamma)$ are equivalence classes $[v]$ of vertices in $\Gamma$ defined by the equivalence relation $\sim$.
\item $([v],[w])$ is an edge in the deflation graph if and only if $(v,w)$ is an edge in $\Gamma$ (note that the definition does not depend on the choice of representative of the class).
\end{itemize}

It is straightforward to check that the map from $\Gamma$ to the deflation $\defl(\Gamma)$ is a retraction of graphs: there is an epimorphism from $\Gamma$ to $\defl(\Gamma)$ and $\defl(\Gamma)$ is an induced subgraph of $\Gamma$. Note that if a graph is connected so is its deflation and the deflation of a graph is never the graph with two vertices and one edge.
\end{defn}

It follows from the definitions that inflation and deflation are ``inverse'' operations, namely, $\defl (\infl( \Gamma)) = \defl (\Gamma)$.

In the next lemma we observe that the fact that the deflation graph of $\Gamma$ is triangle-free is a universal property.

\begin{lemma}\label{lem:universalprop}
Let $\Gamma$ be a simplicial graph. Then deflation graph $\defl(\Gamma)$ of $\Gamma$ is triangle-free if and only if $\GG(\Gamma)$ satisfies the following universal sentence:``for all non-trivial pair-wise commuting elements x,y,z, we have that $C(x)=C(y)$ or $C(x)=C(z)$'', that is
\begin{equation} \label{eq:property}
\begin{split}
&\forall x,y,z,t  \ [x,y]=1, [x,z]=1, [y,z]=1, x\ne 1, y \ne 1, z\ne 1 \\
& \rightarrow ( [t,x]= 1 \leftrightarrow [t,y]=1) \vee ([t,x]= 1 \leftrightarrow [t,z]=1)
\end{split}
\end{equation}
\end{lemma}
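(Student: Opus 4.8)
The plan is to read the universal sentence (\ref{eq:property}) as the assertion that among any three non-trivial pairwise commuting elements of $\GG(\Gamma)$ at least two have the same centraliser (equality $C(x)=C(y)$ being exactly the condition that $[t,x]=1\leftrightarrow[t,y]=1$ holds for every $t$). Its \emph{negation} is then the existence of three non-trivial pairwise commuting elements $x,y,z$ with $C(x),C(y),C(z)$ pairwise distinct, and I will work with this form throughout. Two facts will be used repeatedly: first, by the Corollary to Theorem \ref{thm:centr} we have $C(g)=C(\sqrt g)$, so nothing changes when elements are replaced by their roots; second, for a \emph{vertex} $v$ the Centraliser Theorem gives $C(v)=\langle v\rangle\times\BA(v)=\langle\St(v)\rangle$, so for vertices $C(v)=C(w)$ if and only if $\St(v)=\St(w)$, i.e. $v\sim w$. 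Consequently a triangle $[a],[b],[c]$ of $\defl(\Gamma)$ is precisely a triple of vertices that pairwise commute (edges of $\defl(\Gamma)$ are edges of $\Gamma$) and have pairwise distinct stars, hence pairwise distinct centralisers.

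With this reformulation the implication ``$\GG(\Gamma)$ satisfies (\ref{eq:property}) $\Rightarrow\defl(\Gamma)$ is triangle-free'' is immediate by contraposition: a triangle of $\defl(\Gamma)$ furnishes three non-trivial pairwise commuting vertices with pairwise distinct centralisers, directly negating the property. The content therefore lies entirely in the converse, which I also prove contrapositively: from three non-trivial pairwise commuting elements $x,y,z$ with pairwise distinct centralisers I must \emph{produce} a triangle in $\defl(\Gamma)$.

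I first normalise the witnesses. Using $C(g)=C(\sqrt g)$ together with the observation that roots of commuting elements commute (if $[x,y]=1$ then $y\in C(x)=C(\sqrt x)$, so $[\sqrt x,y]=1$, and iterating gives $[\sqrt x,\sqrt y]=1$), I may assume $x,y,z$ are root elements. Next I conjugate the whole triple by a single element so that all three become cyclically reduced: cyclically reduce $x$ to $\bar x$, note that $y,z\in C(\bar x)=\langle\sqrt{p_1}\rangle\times\cdots\times\langle\sqrt{p_k}\rangle\times\BA(\bar x)$ each split as a part supported on $\alpha(\bar x)$ (automatically cyclically reduced) times a part in $\BA(\bar x)=\GG(Y)$, where every letter of $Y$ commutes with all of $\alpha(\bar x)$, and cyclically reduce the latter part by conjugating inside $\GG(Y)$. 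Such conjugations fix $\bar x$ (and the $\alpha(\bar x)$-part), so they can be performed successively, yielding a common conjugator that cyclically reduces $x,y,z$ simultaneously. Since conjugation preserves both commutation and the (in)equality of centralisers, the hypotheses persist.

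Finally, with $x,y,z$ cyclically reduced commuting roots I invoke the block theory underlying Proposition 5.7 and Theorem \ref{thm:centr}: the blocks appearing in the block decompositions of pairwise commuting cyclically reduced elements are pairwise \emph{parallel}, in the sense that any two distinct blocks have disjoint supports with every letter of one commuting with every letter of the other. Hence $\alpha(x)\cup\alpha(y)\cup\alpha(z)$ partitions into block-supports that commute completely with one another, each of $C(x),C(y),C(z)$ is determined by which blocks occur, and pairwise distinctness of the centralisers forces the three block-sets to be pairwise distinct. Selecting one vertex from each of three suitably chosen blocks produces vertices that pairwise commute, since they lie in distinct, completely commuting block-supports; the desired triangle appears once these vertices can be chosen with pairwise distinct stars. \textbf{The main obstacle is exactly this last step}: turning ``the three elements have pairwise distinct centralisers'' into ``three of the chosen vertices have pairwise distinct stars'', i.e. ruling out that the relevant block representatives all collapse into fewer than three $\sim$-classes. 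This is where the Centraliser Theorem must be used in earnest, tracking how the factors $\langle\sqrt{p_i}\rangle$ and $\BA(\cdot)$ record the stars of the underlying vertices; it is the only genuinely delicate point, and the base case in which $x,y,z$ are already vertices exhibits the triangle directly and guides the general argument.
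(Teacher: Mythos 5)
Your opening reformulation already contains a slip worth noting: the formal sentence (\ref{eq:property}) distinguishes the variable $x$ (only the pairs $(x,y)$ and $(x,z)$ occur in the conclusion), so it is \emph{not} equivalent to your symmetric reading ``among any three pairwise commuting non-trivial elements at least two share a centraliser''. In the group of the path with vertices $a,b,c$ and edges $(a,b),(b,c)$, take $x=b$, $y=a$, $z=ab$, $t=c$: the hypotheses hold, $[c,b]=1$ while $[c,a]\ne 1$ and $[c,ab]\ne 1$, so both biconditionals fail even though $C(a)=C(ab)=\langle a,b\rangle$, i.e.\ two of the three centralisers coincide. Since $\defl(\Gamma)=\Gamma$ is triangle-free here, this is in fact a counterexample to the direction ``$\defl(\Gamma)$ triangle-free $\Rightarrow$ sentence (\ref{eq:property})'' as literally stated. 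More importantly for your argument, the step you honestly flag as the ``main obstacle'' is not merely delicate --- it is impossible even for your symmetric version. Take $\Gamma$ the path on vertices $a,b,c,d$ with edges $(a,b),(b,c),(c,d)$, again with $\defl(\Gamma)=\Gamma$ triangle-free, and consider $x=b$, $y=c$, $z=bc$: these are non-trivial and pairwise commute, and by Theorem \ref{thm:centr} one computes $C(b)=\langle b\rangle\times\langle a,c\rangle$, $C(c)=\langle c\rangle\times\langle b,d\rangle$, $C(bc)=\langle b\rangle\times\langle c\rangle$, which are pairwise distinct. Yet the block supports involved are $\{b\},\{c\},\{b,c\}$ --- pairwise distinct \emph{as sets}, exactly as your argument produces, but supplying only two vertices, so no triangle can be extracted from them. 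The degeneracy enters precisely at your ``parallel blocks'' claim: blocks of distinct commuting elements need not disjointly commute, since they can coincide up to roots and powers (here $b$ is a block of both $x$ and $z$), and this sharing is what lets such triples exist in triangle-free-deflation graphs. So the gap you identified cannot be closed: the implication ``three non-trivial pairwise commuting elements with pairwise distinct centralisers yield a triangle in $\defl(\Gamma)$'' is false.

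For comparison, the paper's own proof is a two-sentence assertion that reads the quantifiers at the level of \emph{vertices} of $\Gamma$, where the equivalence between a triangle in $\defl(\Gamma)$ and three pairwise commuting vertices with pairwise distinct stars (equivalently, centralisers) is indeed immediate; it never addresses the passage from arbitrary group elements back to vertices. Your diagnosis that this passage is the real content of the lemma is therefore more accurate than the paper's treatment --- but the conclusion is that for arbitrary elements the lemma fails, and your normalisations (roots, simultaneous cyclic reduction, block decomposition), all correct in themselves, could never have rescued it. What the paper actually needs later (in the proof of Theorem \ref{thm:defltfree}) is the property applied only to products of pairwise commuting conjugates of powers of generators; restricted to such elements the blocks are single conjugated vertices, a triple with pairwise distinct centralisers does span a triangle in $\Gamma^e$ and hence, by Kim--Koberda's clique results, yields a triangle of $\Gamma$ with distinct stars, and a version of your strategy goes through. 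As stated, however, the proposal has a genuine, unfixable gap at exactly the point you marked.
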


\begin{proof}
By definition the deflation graph of $\Gamma$ is triangle-free if and only if $\Gamma$ does not have three vertices with different centralisers that pair-wise commute. The latter is equivalent to saying that $\GG(\Gamma)$ satisfies the universal sentence (\ref{eq:property}).
\end{proof}

The main result of this section is the following
\begin{thm}\label{thm:defltfree}
If the deflation graph of $\Gamma$ is triangle-free, then the Tame Embedding Problem is equivalent to the Extension Graph Embedding Problem, i.e. there exists a tame embedding from $\GG(\Delta)$ to $\GG(\Gamma)$ if and only if $\Delta$ is an induced subgraph of the extension graph $\Gamma^e$.
\end{thm}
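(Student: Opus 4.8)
One implication is immediate. If $\Delta$ is an induced subgraph of $\Gamma^e$, then the associated extension graph embedding sends each $x_i$ to a single conjugate of a generator, and by Definition \ref{def:tameemb} an extension graph embedding is in particular tame. So the whole content is the converse: starting from a tame embedding $\varphi\colon\GG(\Delta)\to\GG(\Gamma)$, I must produce an induced copy of $\Delta$ in $\Gamma^e$, i.e. reduce the images of the generators to single conjugates.

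The plan is to first fix notation and record two structural facts. Write $\varphi(x_i)=y_{i1}\cdots y_{ir_i}$ with $y_{ij}=\rho_{ij}^{n_{ij}}$, where the roots $\rho_{ij}=\sqrt{y_{ij}}$ are pairwise distinct conjugates of generators (this is exactly tameness) and the $y_{ij}$ pairwise commute. First, since $C(\rho^{n})=C(\rho)$ (Corollary to Theorem \ref{thm:centr}), I would show that commutation of the products is detected on roots: $[\varphi(x_i),\varphi(x_j)]=1$ if and only if $[\rho_{ip},\rho_{jq}]=1$ for all $p,q$, and moreover whether $\rho_{jq}$ commutes with $\rho_{ip}$ depends only on the centraliser $C(\rho_{ip})$, so two roots with a common centraliser are interchangeable for every commutation question. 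Second, and decisively, I would invoke the hypothesis through Lemma \ref{lem:universalprop}: if $Q$ is a clique of $\Delta$ then all roots $\{\rho_{ij}:x_i\in Q\}$ pairwise commute (adjacency in $\Delta$ forces commutation of images, hence of roots by the first fact), so the universal sentence (\ref{eq:property}) caps them at \emph{at most two distinct centralisers}. Applied to a single vertex, the roots of each $\varphi(x_i)$ carry at most two centralisers; applied along an edge of $\Delta$, the \emph{combined} root sets of two adjacent generators still carry at most two.

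I would then set up the reduction to single conjugates. The aim is to choose one conjugate $z_i$ per generator so that $x_i\mapsto z_i$ is an induced embedding $\Delta\hookrightarrow\Gamma^e$. Here the adjacencies of $\Delta$ are automatic (an edge makes all roots commute, so any chosen $z_i,z_j$ commute) and distinctness follows from tameness, so the only constraint is that each non-adjacency survive, i.e. $[z_i,z_j]\neq1$ whenever $(x_i,x_j)\notin E(\Delta)$. Using the at-most-two-centralisers structure I would group the roots of each $\varphi(x_i)$ into at most two parallel classes; since equal-centraliser roots are interchangeable, the selection descends to choosing one class per generator, subject to witnessing every non-adjacency by a non-commuting pair of classes. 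The rigidity that along every clique of $\Delta$ the participating classes number at most two is what prevents the classes from proliferating, and the fact that $\defl(\Gamma)$ is triangle-free lets me realise the needed non-commuting witnesses inside $(\defl\Gamma)^e$ (via Theorem \ref{thm:KKtfree}) and then transport them into $\Gamma^e$ along the induced inclusion $(\defl\Gamma)^e\subseteq\Gamma^e$ coming from the fact that $\defl(\Gamma)$ is an induced subgraph of $\Gamma$.

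The main obstacle is exactly the simultaneous, consistent choice of one conjugate per generator witnessing all non-adjacencies at once. Simple examples already over a path show that one cannot keep the given roots $\rho_{ij}$ fixed and merely delete the redundant ones: two distinct non-neighbours of $x_i$ may each ``see'' only one of its two classes, forcing conflicting selections, and the correct witnesses then live at genuinely new vertices of $\Gamma^e$ (conjugates not among the $\rho_{ij}$). Resolving this is where the triangle-free deflation hypothesis does the real work: via Lemma \ref{lem:universalprop} it bounds the number of centraliser classes along every clique by two, which I would exploit in an induction on $\sum_i(r_i-1)$, at each step either merging two equal-centraliser roots or re-choosing a conjugator guided by a non-commuting witness in the triangle-free graph $\defl(\Gamma)$, until every image is a single conjugate. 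The technical heart — and the step I expect to be hardest — is the bookkeeping that shows these re-choices never destroy a previously secured non-adjacency.
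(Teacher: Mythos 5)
Your easy direction and your opening reductions are sound and in fact coincide with the paper's: the universal sentence of Lemma \ref{lem:universalprop} transfers down the embedding, so $\defl(\Delta)$ is also triangle-free, and it caps the number of distinct centraliser classes among the pairwise-commuting roots in each image $\varphi(x_i)$ at two. But from that point on your proposal is a plan rather than a proof, and you say so yourself: the induction on $\sum_i(r_i-1)$ is announced with two possible moves per step (``merging two equal-centraliser roots or re-choosing a conjugator''), yet no mechanism is given for the re-choosing move, and the invariant you flag as ``the technical heart'' --- that re-choices never destroy a previously secured non-adjacency --- is exactly what is missing. Worse, the one concrete device you do propose, realising non-commuting witnesses inside $(\defl\Gamma)^e$ via Theorem \ref{thm:KKtfree} and transporting them along $(\defl\Gamma)^e \subseteq \Gamma^e$, does not connect to anything: Theorem \ref{thm:KKtfree} needs an embedding of $\GG(\Delta)$ into a pc group with triangle-free \emph{defining} graph, and the tame embedding into $\GG(\Gamma)$ does not yield one into $\GG(\defl\Gamma)$ (the retraction $\GG(\Gamma)\to\GG(\defl\Gamma)$ need not remain injective on $\varphi(\GG(\Delta))$). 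So there is a genuine gap at the core of the converse direction.

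It is instructive to compare with how the paper closes precisely this gap, because its mechanism is of a different kind from what you attempt: instead of modifying the map $\varphi$ (selecting roots or re-choosing conjugators), it modifies the \emph{domain}. If some $\varphi(v_1)=w_{11}w_{12}$ uses two classes, one takes $v_2$ adjacent to $v_1$ with $[v_1]\neq[v_2]$; property (\ref{eq:property}) forces $\varphi(v_2)$ to lie in one of the two classes, whence $C(\varphi(v_1)) < C(\varphi(v_2))$, and triangle-freeness of $\defl(\Delta)$ then forces $[v_1]$ to be a \emph{leaf class} of $\defl(\Delta)$. Choosing $v_3$ adjacent to $v_2$ with $v_3\notin[v_1]\cup[v_2]$, one replaces the vertices of $[v_1]$ by the conjugates $v_3^{v}$, $v\in[v_1]$, inside $\Delta^e$: the resulting induced subgraph $W$ is isomorphic to $\Delta$, $\GG(W)\cong\GG(\Delta)$, and restricting $\varphi$ to $\GG(W)$ gives $\varphi(v_3^{v_1})=\varphi(v_3)^{w_{11}w_{12}}=\varphi(v_3)^{w_{12}}$ (the $w_{11}$ part is absorbed because $[w_{11}]=[\varphi(v_2)]$ commutes with $\varphi(v_3)$), strictly decreasing the number of vertices with two-class images; induction finishes. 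Your diagnosis that one ``cannot keep the given roots fixed and merely delete the redundant ones'' and that correct witnesses live at genuinely new vertices of $\Gamma^e$ is accurate --- but the resolution is this leaf-class observation plus the substitution trick in $\Delta^e$, neither of which appears in your outline, and without them the simultaneous-selection problem you identify remains unsolved.
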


\begin{proof}

Assume that there exists a tame embedding $\varphi$ from $\GG(\Delta)$ to $\GG(\Gamma)$. We want to show that $\Delta < \Gamma^e$.

Since by assumption $\defl(\Gamma)$ is triangle-free, it follows from Lemma \ref{lem:universalprop} that $\GG(\Gamma)$ satisfies the universal sentence (\ref{eq:property}). Since $\varphi(\GG(\Delta)) < \GG(\Gamma)$, we have that $\Th_\forall( \GG(\Gamma)) \subset \Th_\forall(\Delta)$. Therefore $\GG(\Delta)$ satisfies property (\ref{eq:property}) and so by Lemma \ref{lem:universalprop} the deflation graph of $\Delta$ is also triangle-free.

\smallskip

If $\Delta$ is a clique, then $\GG(\Delta) < \GG(\Gamma)$ if and only if $\Gamma$ has a clique $\Gamma'$ as an induced subgraph and $|V(\Delta)|<|V(\Gamma')|$ and so $\Delta < \Gamma$. If $\Gamma$ is a clique, i.e. $\GG(\Gamma)$ is free abelian, then $\GG(\Delta)< \GG(\Gamma)$ if and only if $\Delta$ is a clique and $|V(\Delta)|< |V(\Gamma)|$, so $\Delta < \Gamma$.

Let us further assume that neither $\Delta$ nor $\Gamma$ is a clique. Since vertices in the same equivalence class in the deflation graph have the same centraliser and since the embedding is tame (there is no repetition of vertices of $\Gamma^e$ in the image), without loss of generality, one can assume that the image of a vertex is the product of vertices from different classes, that is $\varphi(v)=w_1 \cdots w_k$ and $[w_i] \ne [w_j]$ if $i \ne j$.

Assume towards contradiction, that there is a vertex, say $v_1$, such that its image is the product of vertices that belong to different classes. Since the deflation graph of $\Gamma$ is triangle-free, it follows that the image of $v_1$ is the product of at most two classes, that is $\varphi(v_1) = w_{11}  w_{12}$. 

Let $v_2$ be a vertex connected to $v_1$ so that $[v_1] \ne [v_2]$ (note that such $v_2$ exists since the deflation of $\Delta$ is connected and not a vertex). By property (\ref{eq:property}), since the deflation graph is triangle-free and since $v_1$ and $v_2$ commute, it follows that the image of $v_2$ is a product of vertices $w_{21}^\epsilon w_{22}^\delta$, $\epsilon, \delta \in \{0,1\}$ such that $[w_{11}]=[w_{21}]$ and $[w_{12}]=[w_{22}]$. Since $[v_1]\ne [v_2]$, it follows that either $\epsilon$ or $\delta$ is $0$. Without loss of generality, assume that $\delta=0$. Since $[v_1] \ne [v_2]$ and $C(\varphi(v_1)) < C(\varphi(v_2))$ and so $C(v_1) < C(v_2)$, it follows that the class of $[v_1]$ is a leaf in the deflation graph of $\Delta$ as $\defl(\Delta)$ is triangle-free. Furthermore, since $\defl(\Delta)$ is not the graph with two vertices and one edge, it follows that there is a vertex $v_3$ in $\Delta$ so that $v_3 \notin [v_1]$, $v_3 \notin [v_2]$ and $(v_3,v_2) \in E(\Delta)$.

Consider the subgraph of $\Delta^e$ induced by the set of vertices $W= (V(\Delta)\setminus \{v \in [v_1]\}) \cup \{v_3^{v} \mid v\in [v_1] \}$. Note that this graph is isomorphic to $\Delta$ since $[v_1]$ is a leaf in the deflation graph of $\Delta$ and is only connected to the vertex $[v_2]$ and since $[v_3]$ is also a vertex of $\defl(\Delta)$ connected to $[v_2]$. Hence, the subgroup $\GG(W)$ of $\GG(\Delta)$ defined by $W$ is isomorphic to $\GG(\Delta)$ and so the restriction of the embedding $\varphi$ to the subgroup defined by $W$ is again an embedding from a group isomorphic to $\GG(\Delta)$ to $\GG(\Gamma)$. Furthermore, since $[w_{11}]=[\varphi(v_2)]$ and $\varphi(v_2)$ and $\varphi(v_3)$ commute, we have that $\varphi(v_3^{v_1}) = \varphi(v_3) ^{w_{11} w_{12}} = \varphi(v_3)^{w_{12}}$. Therefore, the embedding $\varphi|_{\GG(W)}$ of $\GG(W)$ (a subgroup isomorphic to $\GG(\Delta)$) into $\GG(\Gamma)$ satisfies that $\varphi|_{\GG(W)}(v_1')$ is a vertex of $\Gamma^e$, $v_1'= v_3^{v_1}$ and so the number of vertices of $W \simeq \Delta$ whose image is a product of vertices from two different equivalence classes is reduced. Hence, proceeding by induction we conclude that there exists an embedding of $\Delta$ into $\Gamma^e$.
\end{proof}

\subsection{Conditions on the source group}

In this section we discuss conditions on the graph of the source pc group that assure that the three Embedding Problems are equivalent.

\medskip

In \cite{KK}, the authors show that a sufficient condition on the source group is that its graph be a forest.
\begin{thm}[see \cite{KK}]\label{thm:KKtree}
If $\Delta$ is a forest, then the three Embedding Problems are equivalent: there exists an embedding from $\GG(\Delta)$ to $\GG(\Gamma)$ if and only if there exists a tame embedding if and only if $\Delta < \Gamma^e$.
\end{thm}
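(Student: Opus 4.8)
The three conditions form a descending chain of strengths, so the plan is to dispose of the cheap implications and concentrate all effort on the single hard converse. First I would record that $\Delta<\Gamma^e$ yields an extension graph embedding (the direction proved in \cite{KK} and recalled in the introduction), that an extension graph embedding is tame by Definition \ref{def:tameemb} (each generator maps to a single conjugate, so there are trivially no repeated conjugates), and that a tame embedding is in particular an embedding. Hence the entire content is the implication: if $\GG(\Delta)\hookrightarrow\GG(\Gamma)$ and $\Delta$ is a forest, then $\Delta<\Gamma^e$.

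To attack this I would start from the nice form of an embedding recalled just before Definition \ref{def:tameemb}: choose $\varphi$ with $\varphi(x_i)=y_{i1}\cdots y_{ir_i}$, where the $y_{ij}=g_{ij}^{-1}(x'_{ij})^{n_{ij}}g_{ij}$ are pairwise commuting conjugates of powers of generators. Using Corollary \ref{cor:centr} (which gives $C(w)=C(\sqrt{w})$, so replacing each power $(x'_{ij})^{n_{ij}}$ by $x'_{ij}$ leaves every centraliser, and hence every commutation relation among the factors, unchanged) I pass to the vertices $\widehat y_{ij}=g_{ij}^{-1}x'_{ij}g_{ij}$ of $\Gamma^e$ and set $S_i=\{\widehat y_{ij}\}_j\subset V(\Gamma^e)$. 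The Centraliser Theorem (Theorem \ref{thm:centr}) together with injectivity of $\varphi$ then turns the embedding into combinatorial data: if $(x_i,x_k)\in E(\Delta)$ every vertex of $S_i$ is adjacent in $\Gamma^e$ to every vertex of $S_k$, whereas if $(x_i,x_k)\notin E(\Delta)$ some pair $\widehat y\in S_i$, $\widehat y'\in S_k$ fails to commute.

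The problem is thereby reduced to a selection (``rainbow induced subgraph'') problem: choose one representative $v_i\in S_i$ for each $i$ so that $x_i\mapsto v_i$ is an induced embedding $\Delta\hookrightarrow\Gamma^e$. Adjacencies are automatic, and distinctness of the $v_i$ follows once non-commutation holds (a vertex commutes with itself), so the sole essential requirement is that $[v_i,v_k]\ne 1$ for every non-edge. I would run an induction that builds $\Delta$ up one leaf at a time, using that every forest admits an ordering in which each new vertex is a leaf of the graph constructed so far. Having selected $v_1,\dots,v_{n-1}$ realising the current subforest, and with $x_n$ a leaf attached to $x_{n-1}$, the task is to produce $v_n\in S_n$ commuting with $v_{n-1}$ (automatic) but non-commuting with every $v_i$, $i\le n-2$; the fine analysis of when two conjugates of generators commute, governed by Theorem \ref{thm:centr} and the disjoint-commutation Lemma \ref{claim1}, is the tool for this step, fed by the fact that $x_n$ is non-adjacent to all of $x_1,\dots,x_{n-2}$ so that $[\varphi(x_n),\varphi(x_i)]\ne 1$ for each such $i$.

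The main obstacle is precisely this last step, and it is exactly where the forest hypothesis is indispensable. For a general graph one cannot in general pick a single representative of each vertex realising all non-edges simultaneously — this is the failure exhibited by the counterexample of \cite{CDK} cited in the introduction — because the interacting non-commutation constraints may be mutually inconsistent around a cycle. The crux of a full write-up is therefore to isolate a sufficiently strong inductive invariant, controlling (via the geodesic/support structure of the chosen $v_i$ relative to the parent vertex $x_{n-1}$) how each representative sits, so that attaching a leaf never gets stuck. Acyclicity of $\Delta$ is what ensures that a newly attached leaf imposes a non-commutation demand against an \emph{independent} set of already-placed vertices rather than against a configuration that could close up into a commuting cycle, and it is this structural feature, not mere triangle-freeness, that drives the argument.
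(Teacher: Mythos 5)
Your reduction of the theorem to the single implication ``$\GG(\Delta)\hookrightarrow\GG(\Gamma)$ and $\Delta$ a forest $\Rightarrow\Delta<\Gamma^e$'' is correct, and the easy implications are handled properly; note for the record that the paper itself gives no proof of Theorem \ref{thm:KKtree} (it is quoted from \cite{KK}), so the natural in-paper benchmark is the dual result it does prove, Theorem \ref{thm:noncommtree}. The genuine gap is that your proposal stops exactly where the proof begins. The leaf-by-leaf greedy selection is not an outline with a routine step deferred: for each $i\le n-2$, the hypothesis $[\varphi(x_n),\varphi(x_i)]\ne 1$ yields, via Theorem \ref{thm:centr}, only that \emph{some} pair of factors $\widehat y\in S_n$, $\widehat y'\in S_i$ fails to commute, and this witness in $S_n$ may vary with $i$; nothing you wrote shows that a single $v_n\in S_n$ can satisfy all the non-commutation demands simultaneously while also being adjacent to $v_{n-1}$. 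You acknowledge this (``the crux of a full write-up is therefore to isolate a sufficiently strong inductive invariant'') but never state the invariant, and that invariant is the entire mathematical content of the theorem. It is telling that in the dual case the paper does prove, the analogous selection problem is not solved greedily at all but globally: via Lemma \ref{claim1'} (non-vanishing of iterated commutators along connected subgraphs of the non-commutation graph), ``full tuples'' with a minimal number of repetitions, a linear-dependence count in $\BZ_2^r$ on the leaf images, and a commutator $C$ whose image under $\varphi$ is forced to vanish, contradicting injectivity. Some global mechanism of this kind is precisely what your sketch lacks.

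There is also a smaller but real overclaim in the setup. You assert that when $(x_i,x_k)\in E(\Delta)$ ``every vertex of $S_i$ is adjacent in $\Gamma^e$ to every vertex of $S_k$,'' and that distinctness of the representatives ``follows once non-commutation holds.'' The adjacency claim is only true up to equality: $S_i$ and $S_k$ can share a vertex (take $\varphi(x_i)=ab$ and $\varphi(x_k)=ab^2$ with $a,b$ distinct commuting vertices of $\Gamma^e$; this is injective on $\langle x_i,x_k\rangle\cong\BZ^2$), and a shared vertex is equal, not adjacent, to itself. Hence for \emph{adjacent} pairs your selection must additionally avoid collisions $v_i=v_k$, a constraint your argument never addresses; compare Lemma \ref{lem:1} and the final portion of the proof of Theorem \ref{thm:noncommtree}, where exactly this collision case ($\psi_1(T_1)$ meeting $\psi_2(T_2)$) needs its own analysis. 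In summary: correct skeleton and correct easy directions, but the core step is absent and one of your ``automatic'' claims fails as stated.
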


We prove a ``dual'' result for the class $\mathcal{CT}$ of graphs whose non-commutation graph is a forest. More precisely, we show that if $\Delta$ is the complement of a forest, then there is an embedding from $\GG(\Delta)$ to $\GG(\Gamma)$ if and only if $\Delta$ is an induced subgraph of $\Gamma^e$. As a corollary, we deduce the decidability of the Embedding Problem when the defining graph of the source pc group is in $\mathcal{CT}$, or in other words, there is an algorithm that given a complement of a forest $\Delta$ and an arbitrary $\Gamma$ decides whether or not there exists an embedding from $\GG(\Delta)$ to $\GG(\Gamma)$. As we mentioned in the introduction, the Embedding Problem in pc monoids is undecidable. However, Kunc proves in \cite{Kunc} that if one requires the defining graph of the source pc monoid to be in $\mathcal{CT}$, then the problem is decidable. This reinforces the analogy between the results on decidability of the Embedding Problems for pc monoids and pc groups (although the techniques to prove them are rather different).

An important feature of the class $\mathcal{CT}$ is that it is ``generic'' in the sense that every pc group $\GG(\Delta)$ embeds into a pc group $\GG(\Gamma)$, for some $\Gamma \in \mathcal{CT}$, see \cite{KK2}. Roughly speaking, our result states that if a pc group defined by the complement of a tree embeds into a pc group, then it also embeds in it in a nice way, that is there exists an extension graph embedding. This seems to indicate that in order to understand whether or not the three Embedding Problems are equivalent for a given source pc group, it suffices to analyse how this pc group embeds into pc groups whose defining graph is in $\mathcal{CT}$.

\bigskip

We shall make use of the following
\begin{lemma}\label{claim1'}
Let $\GG(\Delta)$ be a pc group, $V(\Delta)=\{x_1,\dots, x_n\}$, and  let $\{x_{i_1}, \dots, x_{i_k} \}$ be a subset of pairwise different elements of $\{x_1, \dots, x_n\}$. Then, 
$$
[x_{i_1}, \dots, x_{i_k}] \ne 1
$$
if and only if each subset $\{x_{i_1}, x_{i_2} \}, \dots, \{x_{i_1}, \dots,x_{i_k} \}$ of vertices of the non-commutation graph $\overline \Delta$ spans a connected subgraph of $\overline \Delta$.
\end{lemma}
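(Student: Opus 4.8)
Throughout, $[x_{i_1},\dots,x_{i_k}]$ denotes the left-normed iterated commutator; I set $w_j=[x_{i_1},\dots,x_{i_j}]$ (so that $w_1=x_{i_1}$ and $w_{j+1}=[w_j,x_{i_{j+1}}]$) and $S_j=\{x_{i_1},\dots,x_{i_j}\}$. The hypothesis on the right-hand side is exactly that $S_j$ spans a connected subgraph of $\overline\Delta$ for every $j=2,\dots,k$ (the case $j=1$ being automatic). The plan is to treat the two implications separately.

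The implication ``$w_k\neq1\Rightarrow$ all $S_j$ connected'' I would prove by contraposition. If some $S_j$ is disconnected, let $j_0$ be the least such index; then $S_{j_0-1}$ is connected while $S_{j_0}$ is not, which forces $x_{i_{j_0}}$ to have no $\overline\Delta$-edge to $S_{j_0-1}$, i.e.\ $x_{i_{j_0}}$ commutes with each of $x_{i_1},\dots,x_{i_{j_0-1}}$. As $w_{j_0-1}$ is a word in these letters, $x_{i_{j_0}}$ commutes with $w_{j_0-1}$, so $w_{j_0}=[w_{j_0-1},x_{i_{j_0}}]=1$ and hence $w_k=1$.

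For the converse I would induct on $j$, carrying the stronger hypothesis that $w_j$ is conjugate to a block $\hat w_j$ (a cyclically reduced word with connected support) whose support $\alpha(\hat w_j)$, equivalently the support of the conjugacy class of $w_j$, equals $S_j$. The base $j=2$ is clear: $S_2$ connected means $x_{i_1},x_{i_2}$ do not commute, so $w_2$ is already a cyclically reduced length-$4$ block with support $S_2$. For the step, write $\hat w_j=g^{-1}w_jg$; since $S_j$ is connected, $\hat w_j$ is a single block, and Theorem~\ref{thm:centr} gives $C(\hat w_j)=\langle\sqrt{\hat w_j}\rangle\times\BA(\hat w_j)$, where $\alpha(\sqrt{\hat w_j})=S_j$ and $\BA(\hat w_j)=\GG(B)$ with $B$ the set of letters outside $S_j$ commuting with every vertex of $S_j$. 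Now $x_{i_{j+1}}$ centralises $w_j$ iff $z:=g^{-1}x_{i_{j+1}}g$ lies in $\langle\sqrt{\hat w_j}\rangle\times\BA(\hat w_j)$. Comparing cyclic supports ($z$ is conjugate to the generator $x_{i_{j+1}}$, so its cyclically reduced form is a single vertex, whereas $|S_j|\ge2$) forces the $\langle\sqrt{\hat w_j}\rangle$-component of $z$ to be trivial; then $z\in\GG(B)$, so $x_{i_{j+1}}\in\alpha(z)\subseteq B$. But connectedness of $S_{j+1}$ says precisely that $x_{i_{j+1}}\notin B$. Hence $x_{i_{j+1}}\notin C(w_j)$ and $w_{j+1}\neq1$.

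It remains to propagate the structural hypothesis, namely that the support of the cyclically reduced form of $w_{j+1}$ is all of $S_{j+1}=S_j\cup\{x_{i_{j+1}}\}$ (the reverse inclusion and connectedness being immediate). Here I would use the amalgamated decomposition $\GG(S_{j+1})=\GG(S_j)\ast_{\GG(D)}\bigl(\GG(D)\times\langle x_{i_{j+1}}\rangle\bigr)$, where $D=\{s\in S_j:[s,x_{i_{j+1}}]=1\}\subsetneq S_j$. Since $\alpha(w_j)=S_j\not\subseteq D$, the element $w_j$ lies in $\GG(S_j)\setminus\GG(D)$ while $x_{i_{j+1}}$ lies outside the amalgamated factor, so $w_{j+1}=w_j^{-1}x_{i_{j+1}}^{-1}w_jx_{i_{j+1}}$ is cyclically reduced of syllable length $4$ in the amalgam and, by the normal form theorem, is not conjugate into either factor; in particular $x_{i_{j+1}}$ survives in the cyclic support. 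The point I expect to be the main obstacle is showing that no old vertex of $S_j$ is lost: the vertices not commuting with $x_{i_{j+1}}$ cannot cross the $x_{i_{j+1}}^{\pm1}$-syllables and so clearly persist, but the vertices of $D$ require a careful cancellation/normal-form analysis to exclude their disappearance between the two occurrences of $w_j$. Once full cyclic support is confirmed, the induction closes and $w_k\neq1$, completing the hard direction.
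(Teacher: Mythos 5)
Your easy direction (contraposition via the least $j_0$ with $S_{j_0}$ disconnected, forcing $x_{i_{j_0}}$ to commute with $w_{j_0-1}$) is exactly the paper's argument, and your overall skeleton for the hard direction also mirrors the paper's: the paper too inducts on $k$ carrying the strengthened hypothesis that $\alpha([x_{i_1},\dots,x_{i_j}])=S_j$, and your Centraliser Theorem/amalgam route for deducing $w_{j+1}=[w_j,x_{i_{j+1}}]\ne 1$ from that hypothesis is a clean (and correct) alternative to the paper's direct cancellation argument. But the proposal has a genuine gap, and you have located it yourself: the propagation of the full-support hypothesis is not proved, only described as requiring ``a careful cancellation/normal-form analysis,'' and this is not a routine verification — it is the actual content of the hard direction. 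Concretely, writing $x=x_{i_{j+1}}$ and letting $p$ (resp.\ $r$) be the maximal left (resp.\ right) divisor of $w_j$ commuting with $x$, one has $w_{j+1}=w_j^{-1}x^{-1}w_jx$ conjugate to $\tilde q^{-1}x^{-1}\tilde q x$ where $\tilde q$ is $w_j$ with $p$ and $r$ stripped; so the cyclic support of $w_{j+1}$ is $\alpha(\tilde q)\cup\{x\}$, and nothing in your argument rules out that some letter $d\in D$ occurs in $w_j$ \emph{only} inside these strippable divisors and hence vanishes. If a $D$-letter is lost, the induction breaks at the next stage: if $x_{i_{j+2}}$'s only non-neighbour-in-commutation (i.e.\ only $\overline\Delta$-edge into $S_{j+1}$) is the lost letter $d$, then your centraliser comparison no longer excludes $x_{i_{j+2}}\in\BA(\hat w_{j+1})$, and likewise your amalgam step needs $\alpha(w_{j+1})\not\subseteq D'$ for the new commuting set $D'$, which is exactly what is in doubt. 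Note also that your first (centraliser) paragraph is subsumed by the amalgam observation, so the entire weight of the proof rests on the unproven support claim.

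This is precisely where the paper's proof does its real work: its induction hypothesis carries, in addition to $\alpha(c)=S_j$, a structural invariant — every letter $a$ of $c=[x_{i_1},\dots,x_{i_j}]$ admits, for every other letter $b$, a decomposition of a cyclic permutation of $c$ of the form $c_1b^{\delta_1}c_2a^{\epsilon_1}c_3b^{\delta_2}c_4a^{\epsilon_2}$ in which an ordered $\overline\Delta$-path from $a$ to $b$ can be read inside each $c_i$. This invariant is what guarantees that once the anchor letter $x_{i_{s_0}}$ (the one not commuting with $x_{i_{j+1}}$) is known not to cancel, no other letter — in particular no letter of $D$ — can cancel either, since any such cancellation would have to cross an occurrence of a non-commuting letter along one of the recorded paths. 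Your proposal has no substitute for this mechanism, so as written it proves the inductive step's nontriviality only conditionally and does not close the induction; to complete it you would either have to prove the full-support propagation (essentially reconstructing the paper's path invariant, perhaps phrased in amalgam normal-form language) or find a different strengthening of the induction hypothesis that survives the commutation with $x_{i_{j+1}}$.
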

\begin{proof}
We first show that if a subset $\{x_{i_1}, \dots, x_{i_l} \}$ for some $l\in \{2, \dots, k\}$ spans a disconnected graph in $\overline \Delta$, then the commutator $[x_{i_1}, \dots, x_{i_k}]$ is trivial. Indeed, let $l$ be minimal so that the set $\{x_{i_1}, \dots, x_{i_l} \}$ spans a disconnected graph, i.e. $(x_{i_s}, x_{il})$ is not an edge in $\overline \Delta$, for all $1 \le s<l$ and so $[x_{i_s}, x_{i_l}]=1$ for all $1\le s <l$. It follows that $[x_{i_1}, \dots, x_{i_l}]=1$ and so $[x_{i_1}, \dots, x_{i_k}]=1$.

We use induction on $k$ to prove the converse. The induction hypothesis are as follows. If the above sets span connected subgraphs in $\overline \Delta$, then 
\begin{itemize}
\item $\alpha([x_{i_1}, \dots, x_{i_k}])=\{x_{i_1}, \dots, x_{i_k} \}$ and so in particular $[x_{i_1}, \dots, x_{i_k}]$ is non-trivial.
\item For all $a,b \in \alpha(c)$, where $c=[x_{i_1}, \dots, x_{i_k}]$ and any occurrence of $a$ in $c$, there is a decomposition of a cyclic permutation $cr(c)$ of $c$ of the type
$$
c_1 b^{\delta_1} c_2 a^{\epsilon_1} c_3 b^{\delta_2} c_4 a^{\epsilon_2},
$$
where $\delta_j, \epsilon_j\in \{\pm 1\}$, $j=1,2$. Furthermore, this decomposition satisfies the following properties:
\begin{itemize}
\item there is an ordered path from $a$ to $b$ in $\overline \Delta$ that can be read in $c_j$, $j=1,\dots, 4$; more precisely, 
$$
c_j=c_{j1}v_1c_{j1}v_2 \dots v_{s-1}c_{js_j}
$$ 
and we have the following paths in $\overline \Delta$: 
$$
p=a,v_1,\dots,v_{s_j-1}b \hbox{ if } j=1,3
$$ and 
$$
p'=bv_1,\dots, v_{s_j-1}a, \hbox{ if } j=2,4.
$$
\item $\alpha(c_j)$ or $\alpha(c_{j+1})$ contains $\alpha(c)\setminus \{a,b\}$.
\end{itemize}

\end{itemize}

\paragraph{Base of induction.} Assume that $k=2$. Since $\{x_{i_1}, x_{i_2}\}$ defines a connected subgraph, it follows that $[x_{i_1}, x_{i_2}]\ne 1$ and the induction hypothesis are satisfied.

\paragraph{Step of induction.} Assume that the sets $\{x_{i_1}, x_{i_2}\}, \dots, \{x_{i_1}, \dots, x_{i_k} \}$ span connected subgraphs. In particular $x_{i_k}$ is connected by an edge to at least one $x_{i_s}$, say $x_{i_{s_0}}$. Let us write the commutator $[x_{i_1}, \dots, x_{i_{k-1}}]$ as follows:
\begin{equation}\label{eq:commutator}
[x_{i_1}, \dots, x_{i_k}] =  [x_{i_1}, \dots, x_{i_{k-1}}]^{-1} x_{i_k}^{-1} [x_{i_1}, \dots, x_{i_{k-1}}] x_{i_k}.
\end{equation}

Since by assumption $x_{i_k} \notin \{x_{i_1}, \dots, x_{i_k-1}\}$, it follows that $x_{i_k}$ cancels in the commutator (\ref{eq:commutator}) if and only if it disjointly commutes with the geodesic of $[x_{i_1}, \dots, x_{i_{k-1}}]$. In turn, since by induction we have that $\alpha([x_{i_1}, \dots, x_{i_{k-1}}])=\{x_{i_1}, \dots, x_{i_{k-1}} \}$, we have that $x_{i_k}$ cancels if and only if $x_{i_k}$ disjointly commutes with $\{x_{i_1}, \dots, x_{i_{k-1}}\}$. Since $x_{i_k}$ is connected to $x_{i_{s_0}}$ in $\overline \Delta$, it follows that $x_{i_k}$ and $x_{i_{s_0}}$ do not disjointly commute. Therefore, $x_{i_k}$ does not cancel in $[x_{i_1}, \dots, x_{i_k}]$ and so $x_{i_k} \in \alpha([x_{i_1}, \dots, x_{i_k}])$.

Furthermore, by induction we have that for all $a,b \in \alpha([x_{i_1}, \dots, x_{i_{k-1}}])$ and any occurrence of $a$ there is a decomposition of (a cyclic permutation of $[x_{i_1}, \dots, x_{i_{k-1}}]$) of the type
\begin{equation}\label{eq:c4}
c_1 b^{\delta_1} c_2 a^{\epsilon_1} c_3 b^{\delta_2} c_4 a^{\epsilon_2},
\end{equation}
where $\delta_j, \epsilon_j\in \{\pm 1\}$, $j=1,2$, so that
\begin{itemize}
\item there is an ordered path from $a$ to $b$ in $\overline \Delta$ that can be read in $c_j$, $j=1,\dots, 4$;
\item $\alpha(c_j)$ or $\alpha(c_{j+1})$ contains $\alpha([x_{i_1}, \dots, x_{i_{k-1}}])\setminus \{a,b\}$. 
\end{itemize}

In particular, let $a$ be $x_{i_{s_0}}$ and $b \in \{x_{i_1}, \dots, x_{i_{k-1}}\} \setminus \{x_{i_{s_0}}\}$. Since $x_{i_{s_0}}$ does not commute with $x_{i_k}$, it cannot cancel in the commutator $c$, see Equation (\ref{eq:commutator}), and so $x_{i_{s_0}}$ occurs in the geodesic of $c$. Since by assumption there is an ordered path in $\overline \Delta$ from $a$ to $b$ that can be read in $c_4$ in the expression (\ref{eq:c4}) and since $a$ does not cancel, it follows that neither does any letter which occurs in the path. In particular, it follows that $b$ cannot cancel and so it appears in the geodesic of $c$. Therefore, we have that $\alpha([x_{i_1}, \dots, x_{i_k}])=\{x_{i_1}, \dots, x_{i_k} \}$. The decomposition of $c$ in the induction hypothesis is an immediate consequence of the decomposition of $[x_{i_1}, \dots, x_{i_{k-1}}]$. This finishes the proof.
\end{proof}

\begin{thm}\label{thm:noncommtree}
If $\overline \Delta$ is a forest, then the three Embedding Problems are equivalent: there exists an embedding from $\GG(\Delta)$ to $\GG(\Gamma)$ if and only if there exists a tame embedding if and only if $\Delta < \Gamma^e$.
\end{thm}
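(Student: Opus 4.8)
The three implications $\Delta < \Gamma^e \Rightarrow (\text{tame embedding}) \Rightarrow (\text{embedding})$ hold for every $\Delta$, the first because an induced subgraph of the extension graph yields an extension graph embedding (Definition \ref{def:tameemb}) and the second because an extension graph embedding is in particular tame. Thus the content of the theorem is the reverse implication, and the plan is to show that, when $\overline\Delta$ is a forest, the existence of any embedding $\GG(\Delta)\hookrightarrow\GG(\Gamma)$ forces $\Delta<\Gamma^e$. First I would invoke the Kim--Koberda normal form recalled before Definition \ref{def:tameemb}: after composing with an inner automorphism one may assume the embedding $\varphi$ sends each generator $x_i$ to a product $\varphi(x_i)=y_{i1}\cdots y_{ir_i}$ of pairwise commuting conjugates of powers of generators, with $y_{ij}=g_{ij}^{-1}(x_{ij}')^{n_{ij}}g_{ij}$. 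The goal is then to select, for each $i$, a single factor $z_i:=y_{i,s(i)}$ so that the assignment of $x_i$ to the underlying vertex $g_{i,s(i)}^{-1}x'_{i,s(i)}g_{i,s(i)}$ of $\Gamma^e$ realises $\Delta$ as an induced subgraph: $[z_i,z_j]=1$ whenever $(x_i,x_j)\in E(\Delta)$ and $[z_i,z_j]\ne 1$ whenever $(x_i,x_j)\in E(\overline\Delta)$.

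The combinatorial engine is Lemma \ref{claim1'}. Because $\overline\Delta$ is a forest, every simple path $x_{i_1}-\cdots-x_{i_k}$ of $\overline\Delta$, read in order, has the property that each prefix $\{x_{i_1},\dots,x_{i_l}\}$ spans a connected subgraph; hence the iterated commutator $[x_{i_1},\dots,x_{i_k}]$ is nontrivial, has support exactly $\{x_{i_1},\dots,x_{i_k}\}$, and admits the explicit path-reading decomposition furnished by the lemma. Applying $\varphi$, each such commutator has nontrivial image in $\GG(\Gamma)$, and I would analyse these images through the Centraliser Theorem \ref{thm:centr} together with the cancellation behaviour of the factors $y_{ij}$.

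This analysis is intended to yield two transfer statements. On the one hand, for a commuting pair $(x_i,x_j)\in E(\Delta)$ I would show that $[\varphi(x_i),\varphi(x_j)]=1$ already forces every factor of $\varphi(x_i)$ to commute with every factor of $\varphi(x_j)$, so the commuting relations place no constraint whatsoever on the selection. On the other hand, for an edge $(x_i,x_j)\in E(\overline\Delta)$ the nontriviality of $[\varphi(x_i),\varphi(x_j)]$ exhibits some factor of $\varphi(x_i)$ not commuting with some factor of $\varphi(x_j)$, and the detailed support control of Lemma \ref{claim1'} prevents such witnessing pairs from being used inconsistently at the two ends of adjacent edges. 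Granting these, the selection is carried out by induction along the forest: rooting each tree component of $\overline\Delta$ and proceeding outward from the root, at each vertex I would choose the representative $z_i$ witnessing non-commutation with the already-chosen representative of its unique parent. Acyclicity of $\overline\Delta$ is exactly what makes this coherent, since in a forest each vertex has a single parent and there is never a cycle of non-commutation constraints forcing two incompatible choices of a single representative. The disconnected case reduces to the connected one because $\GG(\Delta)$ is the direct product of the pc groups of the tree components (see \eqref{eq:decomp}) and distinct components are completely joined in $\Delta$, so the commuting-pair fact makes their chosen representatives commute automatically.

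I expect the main obstacle to be the two transfer statements of the previous paragraph, and in particular the claim that commutation of the images already forces factor-wise commutation: this is precisely the point at which the general Embedding Problems fail to be equivalent (cf.\ \cite{CDK}), so the forest hypothesis, channelled through the rigidity of Lemma \ref{claim1'} and the Centraliser Theorem, must carry the entire weight of the argument here. Verifying that a single representative $z_i$ can simultaneously witness non-commutation along every incident edge of $\overline\Delta$ while respecting all commuting relations is the delicate step, and it is exactly where the absence of cycles in $\overline\Delta$ is indispensable.
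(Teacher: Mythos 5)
Your setup matches the paper's: the easy implications, the Kim--Koberda normal form (in fact the paper uses the sharper form of Theorem 3 of \cite{KK}, where the set $\{y_{i1},\dots,y_{ir_i}\}$ spans a clique of vertices of $\Gamma^e$, not merely commuting conjugates of powers), Lemma \ref{claim1'} applied to orderings of each tree of $\overline\Delta$ whose prefixes span connected subgraphs, and your first transfer statement --- commutation of images forces factor-wise commutation --- is exactly the paper's Remark \ref{rem:edges} and is correct (it follows from Theorem \ref{thm:centr} because each $y_{is}$ is a single block of $\varphi(x_i)$). But your selection scheme has a genuine gap. Rooting the tree and choosing, at each vertex, the factor $z_i$ that witnesses non-commutation with the already-chosen parent representative does not work: non-triviality of $[\varphi(x_i),\varphi(x_c)]$ for a child $x_c$ only produces \emph{some} non-commuting pair of factors, and nothing forces the previously fixed $z_i$ to be a member of such a pair; a vertex of valence $\ge 2$ in $\overline\Delta$ must witness non-commutation along \emph{all} incident edges simultaneously, and acyclicity alone does not rescue the greedy choice. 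The paper resolves this globally, not edge-by-edge: it applies $\varphi$ to the single iterated commutator $[x_{i_1},\dots,x_{i_k}]$ over the \emph{whole} ordered tree, expands via commutator identities, and extracts a ``full tuple'' $(y_{i_1,j_1},\dots,y_{i_k,j_k})$ with $[y_{i_1,j_1},\dots,y_{i_k,j_k}]\ne 1$, whose extended non-commutation graph is then shown (by Lemma \ref{claim1'} and Remark \ref{rem:edges}) to be isomorphic to $T$ --- this is the simultaneous witnessing you flag as delicate but do not supply.

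The second, larger omission is repetitions: a full tuple realises $T$ combinatorially but may assign the \emph{same} element of $\Gamma^e$ to distinct vertices (necessarily leaves of $T$ hanging on a common vertex $y_{i_t,j_t}$), which destroys injectivity of the graph map; your proposal never addresses this, and Lemma \ref{lem:1} does not cover it since such leaves are conjugates of possibly different generators. Eliminating repetitions is where most of the paper's proof lives: assuming every full tuple repeats, it takes one with the maximal number of distinct entries, associates to the images of the offending leaves vectors in $\BZ_2^r$ with $r<A$, derives a linear dependence, and builds from it words $W(1),\dots,W(m)$ and a commutator $C=[x_{i_{t_1}},\dots,x_{i_{t_h}},W(1),\dots,W(m)]$ which is non-trivial in $\GG(\Delta)$ (via a retraction killing all but one leaf per cluster, plus Lemma \ref{claim1'}) yet satisfies $\varphi(C)=1$, contradicting injectivity. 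Finally, your reduction of the forest case to tree components is also too quick: the componentwise embeddings of $\overline\Delta$ into $\overline\Lambda$ can overlap on vertices, and the paper needs a separate argument (when a component is a single vertex, commutation of $\varphi(x_1)$ and $\varphi(x_i)$ generating $\BZ^2$ forces $r_1>1$ or $r_i>1$, giving an alternative representative) to restore injectivity. So while your frame and your statement (a) coincide with the paper's opening moves, the argument as proposed would fail at the two decisive steps.
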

\begin{proof}
From Theorem 3 in \cite{KK}, if $\GG(\Delta)$ embeds into $\GG(\Gamma)$, then there exits an embedding $\varphi:\GG(\Delta) \to \GG(\Gamma)$ defined by the map $x_i \to y_{i1}\cdots y_{ir_i}$, where the set $\{ y_{i1}, \cdots, y_{ir_i}\}$ spans a clique with $r_i$ vertices in $\Gamma^e$. Let $\Lambda$ be the subgraph of $\Gamma^e$ induced by the vertices $\{y_{11}, \dots, y_{1r_1}, \dots, y_{n1}, \dots, y_{nr_n} \}$.

Our goal is to show that $\Delta$ is an induced subgraph of $\Lambda$ or, equivalently, that the complement graph $\overline{\Delta} $ is an induced graph of the complement graph $\overline \Lambda$.

\smallskip

We begin with an observation that we will use throughout the proof. 

\begin{remark}\label{rem:edges}
Firstly, for all $s,t \in \{1,\dots, r_i\}$, there is no edge $(y_{is}, y_{it}) \in \overline \Lambda$ since we have that $[y_{is}, y_{it}]=1$. Secondly, if $[x_i,x_j]=1$, then $[y_{i1} \cdots y_{ir_i}, y_{j1} \cdots y_{jr_j}]=1$ and in particular there are no edges $(y_{is}, y_{it})$ in $\overline{\Lambda}$, for all $1 \le s \le r_i$, $1\le t \le r_j$.
\end{remark}

Let $T$ be a connected component of the forest $\overline \Delta$. Since $T$ is a tree, we can order the vertices of $T$, $V=\{x_{i_1}, \dots, x_{i_k} \}$ so that $\{x_{i_1}, x_{i_2}\}, \dots, \{x_{i_1}, \dots, x_{i_k}\}$ span connected subgraphs of $\overline \Delta$. It follows from Lemma \ref{claim1'} that the commutator $c=[x_{i_1}, \dots, x_{i_k}]$ is non-trivial in $\GG(\Delta)$.

Recall that $\varphi(x_{i_l}) = y_{i_l,1}\cdots y_{i_l,r_{i_l}}$, $i_l=1, \dots,n$. Define $\mathcal C$ to be a maximal (by inclusion) connected subgraph of $\overline \Lambda$ obtained as union of subgraphs spanned by vertices $\{y_{i_1, j_1}, \dots, y_{i_k,j_k} \}$ which satisfy that the subsets $\{y_{i_1,j_1}, y_{i_2,j_2}\}, \dots, \{y_{i_1,j_1}, \dots, y_{i_k,j_k} \}$ span connected subgraphs of $\overline \Lambda$. 

Since $c$ is non-trivial and since by assumption $\varphi$ is injective, it follows that
$$
\varphi(c)= [ y_{i_1,1}\cdots y_{i_1, r_{i_1}}, \dots, y_{i_k,1} \cdots y_{i_k, r_{i_k}}] \ne 1
$$
and so using commutator identities we have that 
$$
\varphi(c) = \prod\limits_{s=(j_1, \dots, j_k), 1\le j_s \le r_{i_s} } {[y_{i_1, j_1}, \dots, y_{i_k, j_k}]}^{g(s)} \ne 1
$$
for some $g(s) \in \GG(\Gamma)$. It follows that there exists at least one tuple $(j_1, \dots, j_k)$ for which $[y_{i_1,j_1}, \dots, y_{i_k, j_k}]$ is non-trivial. Hence, from Lemma \ref{claim1'}, it follows that the subsets $\{y_{i_1, j_1}, y_{i_1, j_2}\}$,$\dots$,$\{y_{i_1,j_1}, \dots, y_{i_1,j_k} \}$ span connected graphs in $\overline \Lambda$ and so $\mathcal C$ is non-empty.

We define the extended non-commutation graph of the set $\{y_{i_1,j_1}, \dots, y_{i_k, j_k}\}$ as follows. For each element of the set $y_{i_s,j_s}$, we introduce a vertex $v_{s}$, $s=1,\dots, k$, so there are $k$ vertices; and there is an edge $(v_{s},v_{s'})$ if and only if $[y_{i_s, j_s}, y_{i_{s'}, j_{s'}}] \ne 1$. Note that although $y_{i_s, j_s}$ may be equal to $y_{i_{s'}, j_{s'}}$ in the group for some $s \ne s'$, in the graph the elements $y_{i_s, j_s}$ and $y_{i_{s'}, j_{s'}}$ are associated to different vertices $v_s$ and $v_{s'}$ correspondingly. 

We now use induction on $k$ to show that the non-commutation graph of the set $\{y_{i_1,j_1}, \dots, y_{i_k, j_k}\}$ is isomorphic to $T$. If $k=2$, since by assumption we have that $[y_{i_1, j_1}, y_{i_1, j_2}]\ne 1$, it follows that the non-commutation graph of $\{y_{i_1,j_1}, y_{i_2, j_2}\}$ is a path of length 1 and so isomorphic to the graph induced by $\{x_{i_1}, x_{i_2} \}$. 

Assume by induction that the non-commutation graphs defined by $\{y_{i_1, j_1}, \dots, y_{i_{k-1}, j_{k-1}}\}$ and $\{x_{i_1}, \dots, x_{i_{k-1}}\}$ are isomorphic. Since $T$ is a tree and the graph defined by $\{x_{i_1}, \dots, x_{i_{k-1}} \}$ is connected, it follows that $x_{i_k}$ is connected  to at most one vertex, say $x_{i_{s_0}}$. So, in particular, $x_{i_k}$ commutes with all the vertices from $\{x_{i_1}, \dots, x_{i_{k-1}} \} \setminus \{x_{i_{s_0}} \}$. It follows that $y_{i_k,1}, \dots, y_{i_k, r_{i_k}}$ pair-wise commute with $y_{i_t, 1}, \dots, y_{i_t, r_{i_t}}$, $t\in \{1, \dots, k-1\} \setminus \{s_0\}$, see Remark \ref{rem:edges}. Since the non-commutation graph of $\{y_{i_1, j_1}, \dots, y_{i_k, j_k}\}$ is connected, it follows that $y_{i_k, j_k}$ must be connected to $y_{i_{s_0}, j_{s_0}}$ and so the graphs defined by $\{y_{i_1,j_1}, \dots, y_{i_k, j_k}\}$ and $\{x_{i_1}, \dots, x_{i_k}\}$ are isomorphic.

\smallskip

A tuple $(y_{i_1,j_1}, \dots, y_{i_k,j_k})$ is called \emph{full} if and only if the commutator $[y_{i_1,j_1}, \dots, y_{i_k,j_k}]$ is non-trivial. We now show that there exist a  full tuple $(y_{i_1,j_1}, \dots, y_{i_k,j_k})$  whose elements are pair-wise distinct. Then, from the above discussion, it follows that the non-commutation graph $\Upsilon$ of such a tuple is an induced subgraph of $\overline \Lambda$; since $T$ is isomorphic to $\Upsilon$, we conclude that $T < \overline \Lambda$.

Assume towards contradiction, that each full tuple $(y_{i_1,j_1}, \dots, y_{i_k,j_k})$ has a repetition of elements, that is $y_{i_s,j_s} = y_{i_{s'},j_{s'}}$, for some $s\ne s'$. Let $(y_{i_1,j_1}, \dots, y_{i_k,j_k})$ be a full tuple with the maximal number of pair-wise distinct elements. If $y_{i_l,j_l} = y_{i_k,j_k}$, then $y_{i_l,j_l}$ and $y_{i_k,j_k}$ are connected by edges with the same elements in the non-commutation graph. Furthermore, since the non-commutation graph of the tuple is isomorphic to the tree $T$ and so has no cycles, it follows that $y_{i_l,j_l}$ and $y_{i_k,j_k}$ (and so $x_{i_l}$ and $x_{i_k}$) are leaves of the graph (that is have valency 1) and  are both connected by an edge to the same element from the tuple $(y_{i_1,j_1}, \dots, y_{i_k,j_k})$, say $y_{i_t,j_t}$. 

Note that if there is another $y_{i_l, m_l}$ in the image of $x_{i_l}$ connected to $y_{i_t,j_t}$ by an edge, then $y_{i_l, m_l}$ is equal to an element $y_{i_s,j_s}$ from the tuple, $s\ne l, k$. Indeed, otherwise, the tuple obtained by replacing $y_{i_l,j_l}$ by $y_{i_l,m_l}$, that is $(y_{i_1,j_1}, \dots, y_{i_l, m_l}, \dots, y_{i_k,j_k})$ is a full tuple (we only replace a leaf by a vertex connected to the tree) and has less repetitions of elements contradicting the choice of the tuple $(y_{i_1,j_1}, \dots, y_{i_k,j_k})$. Furthermore, arguing as above, since $y_{i_s, j_s} = y_{i_m, j_m}$ and $y_{i_m,j_m}$ is connected to $y_{i_t,j_t}$, it follows that $y_{i_s,j_s}$ is also connected to $y_{i_t,j_t}$ (and so $x_{i_s}$ is connected to $x_{i_t}$). Since $T$ is a tree and $y_{i_s, j_s}=y_{i_m,j_m}$, it follows that $y_{i_s,j_s}$ (and so $x_{i_s}$) is a leaf of the tree. 

Similarly, if there exists $y_{i_s, m_s}$ connected to $y_{i_t, j_t}$ by an edge and so that it does not appear in the tuple $(y_{i_1,j_1}, \dots, y_{i_k,j_k})$, then $(y_{i_1,j_1}, \dots, y_{i_k,j_k}) \setminus \{y_{i_l,j_l}, y_{i_s, j_s}\} \cup \{y_{i_l, m_l}, y_{i_s, m_s}\}$ is a full tuple with less repetitions, contradicting the choice of the tuple $(y_{i_1,j_1}, \dots, y_{i_k,j_k})$.

Repeating this argument we get a maximal set $\mathcal A= \{ x_{i_{l_1}}, \dots, x_{i_{l_A}} \}$ of vertices of $T$ such that each $x_{i_{l_a}}$ in $\mathcal A$ is a leaf of the graph $T$ connected to the vertex $x_{i_t}$ and such that each $y_{i_{l_a}, m_{i_{l_1}}}$ connected to $y_{i_t, j_t}$ appears in the tuple $(y_{i_1,j_1}, \dots, y_{i_k,j_k})$.

Consider $\varphi(x_{i_{l_a}})=y_{i_{l_a},1} \cdots y_{i_{l_a}, r_{l_a}} y_{i_{l_a},1}' \cdots y_{i_{l_a}, r_{l_a}'}'$, $a=1, \dots, A$, where $y_{i_{l_a}, f}$ is connected to $y_{i_t,j_t}$ and $y_{i_{l_a}, f'}'$ is not connected to $y_{i_t,j_t}$, $f=1, \dots, r_{l_a}$, $f'=1, \dots, r_{l_a}'$. By construction, $y_{i_{l_a},f}$ and $y_{i_{l_a},f'}'$ are pairwise distinct. On the other hand, it follows from the above argument that each $y_{i_{l_a},f}$, $f=1, \dots, r_{l_a}$ appears in the tuple $(y_{i_{l_1}, j_{l_1}}, \dots, y_{i_{l_A},j_{l_1}})$ and so $r_{l_a}$ is bounded by the number $r$ of different elements in the tuple $(y_{i_{l_1}, j_{l_1}}, \dots, y_{i_{l_A},j_{l_1}})$. In turn, since by assumption there is at least one repetition in the tuple $(y_{i_{l_1}, j_{l_1}}, \dots, y_{i_{l_A},j_{l_1}})$, we have that $r_{l_a} \le r < A$ for all $1\le a\le A$.

Recall that since $x_{i_{l_a}}$ are leaves of the non-commutation graph $T$, they generate a free abelian group. By Remark \ref{rem:edges}, the set of elements $y_{i_{l_a}, f}$, $f=1, \dots, r_{l_a}$, also generate a free abelian group. 

We order arbitrarily the set of $r$ pair-wise distinct elements of the set $\{ y_{i_{l_1}, j_{l_1}}, \dots, y_{i_{l_A}, j_{l_A}} \}$. To each $\varphi(x_{i_{l_a}})$, we associate a vector in $\BZ_2^r$ as follows: there is $1$ in the position $y_{i_{l_b}, j_{l_b}}$, $b=1, \dots, r$, if and only if $y_{i_{l_b}, j_{l_b}} = y_{i_{l_a}, f}$, for some $f=1, \dots r_{l_a}$, that is there is 1 in the position $y_{i_{l_b}, j_{l_b}}$ if and only if $y_{i_{l_b}, j_{l_b}}$ appears in the image $\varphi(x_{i_{l_a}})$ and is connected to $y_{i_t,j_t}$ by an edge. Then $\varphi(x_{i_{l_a}})$, $a=1, \dots, A$ determine $A$ vectors from $\BZ_2^r$ and since $r<A$ it follows that these vectors are linearly dependent.

Note that we do not claim that the images of $x_{i_{l_a}}$, $a=1, \dots, A$ are linearly dependent. The linear dependence involves only the elements that occur in the image and are connected to the vertex $y_{i_t,j_t}$; more precisely, there exist $\lambda_1, \dots, \lambda_A \in \BZ$ such that $\varphi( x_{i_{l_1}}^{\lambda_1} \cdots x_{i_{l_A}}^{\lambda_A})$ is a word in $y_{i_{l_a}, f'}'$, $a=1, \dots, A$, $f'=1, \dots, r_{l_a}'$, where by definition $y_{i_{l_a}, f'}'$ are not connected to $y_{i_t,j_t}$. Since $x_{i_{l_a}}$ are leaves of the non-commutation graph $T$, they are only connected to $x_{i_t}$ (and so commute with the other vertices). By Remark \ref{rem:edges}, it follows that if $b\ne t$, then $y_{i_{l_a},f'}'$ commutes with $y_{i_{l_b}, j_{l_b}}$. Furthermore, by definition $y_{i_{l_a}, f'}'$ are not connected to $y_{i_t, j_t}$ and so they commute with $y_{i_t,j_t}$. We conclude that there exist $\lambda_1, \ldots, \lambda_A$ such that 

\begin{equation}\label{eq:ld}
[y_{i_1,j_1}, \dots, y_{i_t, j_t},  \varphi(x_{i_{l_1}}^{\lambda_1} \cdots x_{i_{l_A}}^{\lambda_A})]=1. 
\end{equation}

In fact, for any other full tuple $(y_{i_1,j_1'}, \dots, y_{i_t, j_t}, \dots, y_{i_{l_A}, j_{l_A}'})$ that contains $y_{i_t, j_t}$, (not necessarily with minimal number of repetitions), we have that
\begin{equation}\label{eq:commutatortrivial}
[y_{i_1,j_1'}, \dots, y_{i_t, j_t},  \varphi(x_{i_{l_1}}^{\lambda_1} \cdots x_{i_{l_A}}^{\lambda_A})]=1.
\end{equation} 

Since $x_{i_{l_a}}$ are leaves of the tree $T$, there exists an ordering on the vertices of $T$
$$
x_{i_1}, \dots, x_{i_t}, x_{i_{l_1}}, \dots, x_{i_{l_A}}
$$ 
so that the subsets of vertices 
$$
\{x_{i_1}, x_{i_2}\}, \dots, \{x_{i_1}, \dots, x_{i_t}, x_{i_{l_1}}, \dots, x_{i_{l_A}}\}
$$ 
span connected subgraphs. We can extend this order to all vertices of $T$. More precisely, fix an ordering on the vertices of $T$: 
$$
x_{i_{t_1}}< \dots< x_{i_{t_h}}< x_{i_{c_1,1}}< \dots< x_{i_{c_1,r_1}}< \dots< x_{i_{c_m, 1}}< \dots< x_{i_{c_m, r_m}},
$$
where $x_{i_{t_1}}, \dots, x_{i_{t_h}}$ are vertices of valence at least $2$ and $x_{i_{c_s,1}}, \dots, x_{i_{c_s,r_s}}$ are leaves of the tree joined to the same vertex of valence greater than or equal to $2$, $s=1, \dots, m$.

Let $F=(y_{i_1, j_1}, \dots, y_{i_k,j_k})$ be a full tuple with minimal number of repetitions and let $s\in \{1, \dots, m\}$. Set $w_F(x_{i_{c_s,1}}, \dots, x_{i_{c_s,r_s}})$ to be the linear combination such that $\varphi(w_F)$ is a word that commutes with $y_{i_1, j_1}, \dots, y_{i_t,j_t}$, see Equation (\ref{eq:ld}). Let 
$$
W(s) = \prod\limits_{F \in \mathcal F} w_F(x_{i_{c_s,1}}, \dots, x_{i_{c_s,r_s}})^{N_F},
$$ 
where $\mathcal F$ is the set of full tuples with minimal number of repetitions and the integers $N_F \in \BZ$ are chosen in such a way that all the words $w_F(x_{i_{c_s,1}}, \dots, x_{i_{c_s,r_s}})$ appear in $W(s)$ as subwords.

Consider the commutator
$$
C=[x_{i_{t_1}}, \dots, x_{i_{t_h}}, W(1), \dots, W(m)].
$$
We claim that the commutator $C$ is non-trivial in $\GG(\Delta)$. Recall that since $x_{i_{c_s, 1}}, \dots, x_{i_{c_s, r_s}}$, $s\in \{1, \dots,m\}$, are leaves of the non-commutation tree $T$, they generate a free abelian group. Then the map $\pi$ that sends $x_{i_{c_s, 1}}$ to $x_{i_{c_s, 1}}$ and $x_{i_{c_s, 2}}, \dots, x_{i_{c_s, r_s}}$ to $1$, $s=1, \dots, m$, induces a projection that sends $C$ to $[x_{i_{t_1}}, \dots, x_{i_{t_h}}, x_{i_{c_s,1}}^{n_1}, \dots, x_{i_{c_m,1}}^{n_m}]$, $n_s \in \BZ\setminus\{0\}$, $s=1,\dots,m$. By the choice of order on the set of vertices of the tree $T$, each set of vertices $\{x_{i_{t_1}}, x_{i_{t_2}}\},\dots, \{x_{i_{t_1}}, \dots, x_{i_{t_h}}, x_{i_{c_1,1}}, \dots, x_{i_{c_m,1}}\}$ spans a connected subgraph. Hence, by Lemma \ref{claim1'} it follows that $\pi(C)=[x_{i_{t_1}}, \dots, x_{i_{t_h}}, x_{i_{c_s,1}}^{n_1}, \dots, x_{i_{c_m,1}}^{n_m}]$ is non-trivial and so neither is $C$.

On the other hand, using commutator identities and the fact that $[ y_{i_1,j_1'}, \dots, y_{i_t, j_t},  \varphi(W_F)]=1$, see Equation (\ref{eq:commutatortrivial}), we see that $\varphi(C)$ is trivial - a contradiction with the fact that $\varphi$ is injective. Hence there is a full tuple $(y_{i_1,j_1}, \dots, y_{i_k,j_k})$ without repetitions of elements and so the graph $T$ is an induced subgraph of $\overline\Lambda$.

\medskip

Finally, we are left to show that if every connected component $T$ of $\overline \Delta$ embeds into $\overline \Lambda$, then 
$\overline \Delta$ also embeds into $\overline \Lambda$. We prove it by induction on the number of connected components. Assume that the forest $\overline \Delta$ is the disjoint union of $n$ trees $T_1', \dots, T_n'$. Let $T_1 = T_1'$ and $T_2$ be the disjoint union of $T_2', \dots, T_n'$. Note that since $T_1$ and $T_2$ are disjoint in the non-commutation graph $\overline \Delta$, it follows that $\GG(T_1)$ and $\GG(T_2)$ commute. Recall that $\varphi: \GG(\Delta) \to \GG(\Gamma)$ is an embedding defined by the map $x_i \to y_{i1} \cdots y_{ir_i}$, $i=1, \dots, n$ and by Remark \ref{rem:edges} there are no edges between $y_{ik}$ and $y_{jl}$ for all $x_i \in T_1$ and $x_j \in T_2$. By induction we assume that there exist injective maps $\psi_l: T_l \to \overline \Lambda$, $l=1,2$ such that $\psi_l(x_i) = y_{ij_{il}}$, where $j_{il} \in \{1,\dots, r_i\}$, $l=1,2$. Let us show that there exists $\psi:\overline \Delta \to \overline \Lambda$ such that $\psi(x_i)=y_{ij_i}$, where $j_{i} \in \{1,\dots, r_i\}$. If $\psi_1(T_1)$ and $\psi_2(T_2)$ are disjoint, then we can define $\psi(T_1)=\psi_1(T_1)$ and $\psi(T_2)=\psi_2(T_2)$ and clearly $\psi$ satisfies the induction hypothesis.

Assume that $T_1$ is just one vertex, say $x_1$, and that $\psi_1(T_1)$ and $\psi_2(T_2)$ are not disjoint, say $\psi_1(x_1) = y_{1j_{11}} = y_{ij_{i2}} = \psi_2(x_i)$, for some $x_i$ in $T_2$. Since $x_1$ and $x_i$ generate a free abelian subgroup of rank 2 in $\GG(\Delta)$ and since $\varphi$ is injective, we have that $\varphi(x_1)$ and $\varphi(x_i)$ generate a free abelian group of rank 2. It follows that either $\varphi(x_1)=y_{1,1} \cdots y_{1, {r_1}}$ and $r_1 >1$ or $\varphi(x_i)=y_{i_1, 1}\cdots y_{i_i,r_i}$ and $r_i>1$. If $r_1>1$, define $\psi(T_2) = \psi_2(T_2)$ and $\psi(T_1)=\psi(x_1)= y_{1j}$, where $y_{1j} \ne y_{1j_{11}}$, $j\in \{1, \dots, r_1\}$. Similarly, if $r_i>1$, define $\psi(T_2) = \psi_2(T_2)$ and $\psi(T_1)=\psi(x_1)= y_{ij}$, where $y_{ij} \ne y_{ij_{i2}}$, $j\in \{1, \dots, r_i\}$. Then $\psi$ satisfies the induction hypothesis.

On the other hand, if $T_1$ is not a vertex, then we claim that $\psi_1(T_1)$ and $\psi_2(T_2)$ are disjoint. Indeed, assume towards contradiction that $\psi_1(x_1)=y_{1j_{11}}=y_{2j_{22}}=\psi_2(x_2)$, for some $x_1 \in T_1$ and $x_2 \in T_2$. Let $x_3$ in $T_1$ be connected to $x_1$, that is $(x_1, x_3)\in E(\overline \Delta)$. Since $T_1$ and $T_2$ are disjoint, it follows that $x_3 \in T_1$ commutes with $x_2 \in T_2$. From Remark \ref{rem:edges}, we have that if $\varphi(x_3)=y_{31} \cdots y_{3r_3}$ and $\varphi(x_2)=y_{21} \cdots y_{2r_2}$, then $(y_{2l_2}, y_{3l_3}) \notin E(\overline \Lambda)$, $l_s=1, \dots, r_s$, $s=2,3$. It follows that $\psi_1(x_3)=y_{3,j_{31}}$ is not connected to $\psi_1(x_1)=y_{1j_{11}}=y_{2j_{22}}$, that is $(\psi_1(x_1), \psi_1(x_3)) \notin E(\overline \Lambda)$ -a contradiction since $(x_1,x_3) \in E(\overline \Delta)$ and $\psi_1$ is injective. This shows that there exists $\psi: \overline \Delta \to \overline \Lambda$ satisfying the induction hypothesis and so $\overline \Delta$ is an induced subgraph of $\overline\Lambda$.
\end{proof}

\bigskip

Combining the main results of this section, we derive the following corollaries.

\begin{cor}\label{cor:decemb2dim}
There exists an algorithm that given a simplicial graph $\Delta$ and a triangle-free or triangle-built graph $\Gamma$ decides whether or not there exists an embedding from $\GG(\Delta)$ to $\GG(\Gamma)$. In other words, it is decidable whether or not an arbitrary pc group is a subgroup of a triangle-free or triangle-built pc group.
\end{cor}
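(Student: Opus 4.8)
The plan is to reduce the Embedding Problem to the Extension Graph Embedding Problem using the equivalence theorems already recorded for triangle-free and triangle-built target graphs, and then to invoke the decidability of the latter. First I would observe that the two hypotheses on $\Gamma$ are themselves decidable: testing whether a finite graph contains a triangle, or whether it is triangle-built (square- and $P_3$-free), amounts to inspecting finitely many induced subgraphs on at most four vertices, so an algorithm can determine which of the two cases applies (the statement guarantees that at least one does).

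Next, in either case I would appeal to the equivalence of the Embedding Problems. If $\Gamma$ is triangle-free, Theorem \ref{thm:KKtfree} gives that $\GG(\Delta)$ embeds into $\GG(\Gamma)$ if and only if $\Delta$ is an induced subgraph of the extension graph $\Gamma^e$; if $\Gamma$ is triangle-built, Theorem \ref{thm:CDKtbuilt} gives the same equivalence. Thus in both cases the analytic question of embeddability is transferred, without loss of information, to the purely graph-theoretic question of whether $\Delta$ is an induced subgraph of $\Gamma^e$.

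Finally, Corollary \ref{cor:decEGE} (decidability of the Extension Graph Embedding Problem, itself resting on the computable bound $R=4Kn^2M^{K+1}$ of Theorem \ref{thm:boundembd}) provides an algorithm that, given $\Delta$ and $\Gamma$, decides whether $\Delta$ is an induced subgraph of $\Gamma^e$. Composing this with the reduction above yields the desired decision procedure: run the test for $\Delta<\Gamma^e$ and return its answer, which by the equivalence coincides with the answer to whether $\GG(\Delta)$ embeds into $\GG(\Gamma)$.

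There is essentially no genuine obstacle remaining, as all the substantive work has been done upstream: the hard content lies in Theorem \ref{thm:boundembd}, which bounds the conjugator lengths so that $\Gamma^e$ need only be searched inside the computable finite ball $B(\Gamma^e, R)$, and in the equivalence theorems of \cite{KK} and \cite{CDK}. The only point requiring minor care is to confirm that the search over $B(\Gamma^e, R)$ can be carried out effectively, that is, that one can enumerate the finitely many vertices $y_i^{v}$ with $|v|\le R$ and decide adjacency by solving the word and commutation problems in $\GG(\Gamma)$; both are decidable in pc groups, so this presents no difficulty.
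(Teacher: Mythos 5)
Your proposal is correct and follows exactly the paper's (implicit) argument: reduce to the Extension Graph Embedding Problem via Theorem \ref{thm:KKtfree} in the triangle-free case and Theorem \ref{thm:CDKtbuilt} in the triangle-built case, then apply the decidability result of Corollary \ref{cor:decEGE} resting on the bound of Theorem \ref{thm:boundembd}. Your added remarks on the decidability of the hypotheses on $\Gamma$ and the effective enumeration of $B(\Gamma^e,R)$ using the solvable word problem are sound and merely make explicit what the paper leaves tacit.
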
 
 
\begin{cor} \label{cor:comt}
There is an algorithm that given a forest or a complement of a forest $\Delta$ and an arbitrary simplicial graph $\Gamma$ decides whether or not there exists an embedding from $\GG(\Delta)$ to $\GG(\Gamma)$. In other words, it is decidable whether or not a pc group defined by a forest or a complement of a forest is a subgroup of an arbitrary pc group $\GG(\Gamma)$.
\end{cor}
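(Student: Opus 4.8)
The plan is to reduce the Embedding Problem to the Extension Graph Embedding Problem and then invoke the decidability of the latter. First I would split into the two cases permitted by the hypothesis on the source graph. If $\Delta$ is a forest, then Theorem \ref{thm:KKtree} guarantees that the three Embedding Problems are equivalent; if instead $\Delta$ is the complement of a forest, then Theorem \ref{thm:noncommtree} yields the same conclusion. In either case one obtains the key equivalence: there exists an embedding from $\GG(\Delta)$ into $\GG(\Gamma)$ if and only if $\Delta$ is an induced subgraph of the extension graph $\Gamma^e$.

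Next I would appeal to Corollary \ref{cor:decEGE}, which furnishes an algorithm that, given arbitrary simplicial graphs $\Delta$ and $\Gamma$, decides whether $\Delta$ is an induced subgraph of $\Gamma^e$. Composing this procedure with the equivalence above produces the desired decision algorithm: on input $(\Delta, \Gamma)$ with $\Delta$ a forest or a complement of a forest, run the Extension Graph Embedding decision algorithm and return its output. By the case analysis, this output coincides with the answer to ``does $\GG(\Delta)$ embed into $\GG(\Gamma)$?''.

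All the substantive work has already been carried out upstream, so I do not expect a genuine obstacle; the corollary is essentially an assembly of earlier results. The one point I would be careful to emphasise is that the reduction relies on the full \emph{equivalence} of the Embedding and Extension Graph Embedding Problems for these source classes: it is not enough that extension graph embeddings are always embeddings (which holds trivially), one genuinely needs the converse, and this is precisely the content supplied by Theorems \ref{thm:KKtree} and \ref{thm:noncommtree}. Finally, the effectivity of the overall procedure ultimately rests on the explicit radius bound $R = 4Kn^2M^{K+1}$ from Theorem \ref{thm:boundembd}, which renders the search over the finite ball $B(\Gamma^e, R)$ decidable and thereby underwrites Corollary \ref{cor:decEGE}.
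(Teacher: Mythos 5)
Your proposal is correct and follows exactly the paper's own route: the paper derives Corollary \ref{cor:comt} by combining Theorem \ref{thm:KKtree} (forest case) and Theorem \ref{thm:noncommtree} (complement-of-forest case) to reduce the Embedding Problem to the Extension Graph Embedding Problem, which is decidable by Corollary \ref{cor:decEGE} via the explicit bound of Theorem \ref{thm:boundembd}. Your emphasis on needing the full equivalence of the Embedding Problems, not merely the trivial direction, is exactly the point the paper's argument rests on.
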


\begin{cor}
There exists an algorithm that given a simplicial graph $\Delta$ and a graph $\Gamma$ whose deflation is triangle-free decides whether or not there exists a tame embedding from $\GG(\Delta)$ to $\GG(\Gamma)$.
\end{cor}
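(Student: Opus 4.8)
The plan is to obtain this corollary as an immediate combination of the two main results of this section: the reduction of the Tame Embedding Problem to the Extension Graph Embedding Problem under the triangle-free deflation hypothesis (Theorem \ref{thm:defltfree}), together with the decidability of the latter (Corollary \ref{cor:decEGE}). No new argument is required beyond chaining these together.

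First I would note that the side condition on the input, namely that $\defl(\Gamma)$ be triangle-free, is itself verifiable by a terminating procedure: from the finite graph $\Gamma$ one computes the star $\St(v)$ of each vertex, groups the vertices into the equivalence classes of the relation $\sim$ of Definition \ref{def:defl}, forms the quotient graph $\defl(\Gamma)$, and checks in finitely many steps that it contains no triangle. Hence the class of admissible inputs is decidable and we may freely assume we are in the situation of Theorem \ref{thm:defltfree}.

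Next, by Theorem \ref{thm:defltfree} the hypothesis that $\defl(\Gamma)$ is triangle-free guarantees that the existence of a tame embedding from $\GG(\Delta)$ to $\GG(\Gamma)$ is \emph{equivalent} to the graph-theoretic condition that $\Delta$ be an induced subgraph of the extension graph $\Gamma^e$. Finally, Corollary \ref{cor:decEGE} supplies an algorithm that, on input $\Delta$ and $\Gamma$, decides whether or not $\Delta < \Gamma^e$. The desired algorithm therefore runs this procedure and returns its answer verbatim: it reports a tame embedding exists precisely when $\Delta < \Gamma^e$.

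The genuine content lies entirely in the two invoked results, so there is no real obstacle left at this stage. The load-bearing step is Theorem \ref{thm:defltfree}, which trades the a priori difficult tame embedding question — whose witnesses are products of pairwise commuting conjugates of powers of generators — for the single-conjugate condition $\Delta < \Gamma^e$; the latter is in turn made effective by the length bound on conjugators furnished by Theorem \ref{thm:boundembd}. Once both are available, the corollary follows with no further calculation.
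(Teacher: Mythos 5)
Your proposal is correct and takes essentially the same route as the paper, which proves this corollary by observing that Theorem \ref{thm:defltfree} makes the Tame Embedding Problem equivalent to the Extension Graph Embedding Problem when $\defl(\Gamma)$ is triangle-free, and then invoking the decidability of the latter from Corollary \ref{cor:decEGE}. Your added remark that the hypothesis on the input (triangle-freeness of $\defl(\Gamma)$) is itself algorithmically checkable is correct and harmless, though the paper does not bother to state it.
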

\begin{proof}
It follows from Theorems \ref{thm:KKtfree}, \ref{thm:CDKtbuilt}, \ref{thm:defltfree}, \ref{thm:KKtree} and \ref{thm:noncommtree} that the corresponding Embedding Problems are equivalent to the Extension Graph Embedding Problem which is decidable by Corollary \ref{cor:decEGE}.
\end{proof}

\section{Embeddability and universal equivalence}\label{sec:universal}

The goal of this section is to establish a relation between the model-theoretic problem of classification of pc groups up to universal equivalence and the algebraic problem of embeddability between pc groups. More concretely, we establish when two pc groups are universally equivalent in terms of mutual embeddability into the inflation pc groups.

\begin{thm11}
The partially commutative groups $\GG(\Delta)$ and $\GG(\Gamma)$ are universally equivalent if and only if there exist tame embeddings $\varphi:\GG(\Delta) \to \GG(\Gamma_n)$ and $\psi:\GG(\Gamma) \to \GG(\Delta_m)$, where $n$ and $m$ are the number of {\rm(}canonical{\rm)} generators of $\GG(\Delta)$ and $\GG(\Gamma)$ respectively. 
\end{thm11}

Recall that the $n$-inflation of a partially commutative group $\GG(\Delta_n)$ is the graph product with underlying graph $\Delta$ and $n$-generated free abelian vertex groups.

\medskip

We use this characterisation to transfer algebraic and algorithmic results on embeddability to model-theoretic ones. For instance, we deduce that two pc groups defined by cycles are elementarily equivalent if and only if they are isomorphic; similarly, we use the fact that the (Tame) Embedding Problem is decidable when the inflation graph of the target group is triangle-free, to prove that there is an algorithm to decide when a pc group is universally equivalent to a given $2$-dimensional pc group.

\medskip

We begin by proving that a pc group and its $n$-inflation are universally equivalent. This result is a natural generalisation of the fact that free abelian groups are universally equivalent.

\begin{lemma}\label{lem:disextcen}
Let $\GG$ be a pc group and let $\HH$ be a (free) extension of a centraliser:
$$
\HH=\langle \GG, A \mid [A,C_\GG(x)]=1 \rangle,
$$
where $A \simeq \BZ^{n-1}$ and $x$ is a generator of $\GG$. Then $\HH$ is discriminated by $\GG$. 
\end{lemma}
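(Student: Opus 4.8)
The plan is to realise $\HH$ as the amalgamated free product $\GG\ast_{C_\GG(x)}\bigl(C_\GG(x)\times A\bigr)$ and to discriminate it by a family of retractions that send $A$ into the centre of $C_\GG(x)$. By the Centraliser Theorem \ref{thm:centr}, since $x$ is a (cyclically reduced, single-block) generator we have $C_\GG(x)=\langle x\rangle\times\BA(x)$, so $x$ is central in $C_\GG(x)$. Hence, for every tuple $\bar N=(N_1,\dots,N_{n-1})\in\BZ^{n-1}$, the assignment that fixes $\GG$ pointwise and sends the free generators $a_1,\dots,a_{n-1}$ of $A$ to $x^{N_1},\dots,x^{N_{n-1}}$ respects both $[A,C_\GG(x)]=1$ (because $x^{N_i}$ is central in $C_\GG(x)$) and $[a_i,a_j]=1$, and therefore defines a retraction $\phi_{\bar N}\colon\HH\to\GG$. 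It thus suffices to show that for any finite set $S$ of non-trivial elements of $\HH$ there is a choice of $\bar N$ with $\phi_{\bar N}(w)\ne 1$ for every $w\in S$.

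First I would put each $w\in S$ in its amalgam normal form $w=g_0\,a^{(1)}\,g_1\cdots a^{(k)}\,g_k$, where $a^{(i)}\in A\setminus\{1\}$ and the interior factors satisfy $g_1,\dots,g_{k-1}\in\GG\setminus C_\GG(x)$, using that $A$ is a direct complement of $C_\GG(x)$ in $C_\GG(x)\times A$. Words with $k=0$ lie in $\GG\setminus\{1\}$ and are fixed by every $\phi_{\bar N}$, so they cause no trouble. For $k\ge 1$, writing $a^{(i)}=a_1^{e_{i1}}\cdots a_{n-1}^{e_{i,n-1}}$, the image is $\phi_{\bar N}(w)=g_0\,x^{m_1}\,g_1\,x^{m_2}\cdots x^{m_k}\,g_k$ with $m_i=\sum_j e_{ij}N_j$. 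Since only finitely many non-zero integer vectors $(e_{i1},\dots,e_{i,n-1})$ occur across $S$, I would choose $\bar N$ generically and large — for instance $N_j=T^{\,j}$ with $T$ large — so that every exponent $m_i$ is non-zero and $\min_i|m_i|$ exceeds all the geodesic lengths $|g_j|$ occurring in $S$.

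The crux is then the following no-cancellation statement, which I would establish using the divisibility and cancellation theory of pc groups (Section 3 of \cite{CK1} and the double-coset decomposition already exploited in Lemma \ref{lem:2}): if $x$ is a generator, $g_1,\dots,g_{k-1}\in\GG\setminus C_\GG(x)$, $g_0,g_k\in\GG$ are arbitrary, and the exponents $m_i$ are non-zero with $|m_i|$ larger than the lengths of the $g_j$, then $g_0\,x^{m_1}\,g_1\cdots x^{m_k}\,g_k\ne 1$. The key point is that each interior factor admits a decomposition $g_i=c_L^{(i)}h_i\,c_R^{(i)}$ with $c_L^{(i)},c_R^{(i)}\in C_\GG(x)$ and $h_i\ne 1$ a representative of the $\bigl(C_\GG(x),C_\GG(x)\bigr)$-double coset of $g_i$, non-trivial precisely because $g_i\notin C_\GG(x)$. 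Since no non-trivial element of $C_\GG(x)$ — in particular neither $x$ nor $x^{-1}$ — left- or right-divides $h_i$, the commuting parts $c_L^{(i)},c_R^{(i)}$ merge harmlessly with the adjacent $x$-powers while the large powers $x^{m_i}$ cannot be absorbed into the $h_i$; they therefore survive in a geodesic representative of $\phi_{\bar N}(w)$, whose length is at least $\sum_i|m_i|$ minus a bounded correction, forcing non-triviality. I expect this cancellation estimate to be the main obstacle, since it requires careful bookkeeping of how the $x$-syllables interact with the parts of the $g_i$ that commute with $x$; everything else (well-definedness of $\phi_{\bar N}$, the normal form, and the generic choice of $\bar N$) is routine. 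Taking a single $\bar N$ large enough to work simultaneously for all finitely many $w\in S$ yields a retraction injective on $S$, proving that $\HH$ is discriminated by $\GG$.
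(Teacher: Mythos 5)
Your proposal is correct and follows essentially the same route as the paper's proof: the paper likewise writes each non-trivial element in normal form with respect to the centraliser-extension splitting (phrased as an HNN-extension rather than your amalgam $\GG\ast_{C_\GG(x)}(C_\GG(x)\times A)$), retracts onto $\GG$ by sending $A$ to high powers of $x$, and verifies non-triviality of the images by a no-cancellation argument — there carried out via Britton's lemma for $\GG$ viewed as an HNN-extension over the subgroup generated by the remaining generators, with the maximal $x$-exponent $c_0$ occurring in the coefficients playing the role of your geodesic-length bound. The only point to tighten is quantitative: already for $k=1$ one needs $|m_1|>|g_0|+|g_1|$ (sums, not maxima, of the lengths — or, as in the paper, powers exceeding $2c_0$ so the merged exponents stay non-zero), which your generic choice $N_j=T^{j}$ with $T$ large supplies in any case.
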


Recall that a family of homomorphisms $\varphi_n: H \to G$ is called \emph{discriminating} if for any finite set $S$ of elements in $H$, there exists a homomorphism $\varphi_{n_S}$ in the family which is injective in the set $S$. We say that $H$ is \emph{discriminated} by $G$ if there exists a discriminating family.

\begin{proof}
Since $\HH$ is an HNN-extension, any non-trivial element $h$ of $\HH$ can be written as follows:
$$
h=g_1 u_1 g_2 u_2 \dots g_k u_k g_{k+1}
$$
where $g_i \in \GG$, $i=1, \dots, k+1$, $u_j \in A$, $u_j \ne 1$, $j=1, \dots, k$, $g_i \notin C_\GG(x)$, $i=2, \dots, k$.
The free abelian group $A$ is discriminated by $\langle x \rangle$. Let $\phi_r$ be a discriminating family for $A$. Any homomorphism $\phi_r : A \to \langle x \rangle$ extends naturally to a homomorphism $\phi_r': \HH \to \GG$ whose restriction onto $\GG$ is the identity.  Let $c_0$ be the maximal exponent of $x$ in the coefficients $g_i$, $i=1, \dots, k+1$, more precisely, for every geodesic word representing $g_i$ of the form $w_1 x^c w_2$, where $x^{\pm 1}$ does not left-divide $w_1$ and does not right-divide $w_2$ one has that $|c| \le c_0$. For the infinite subfamily $\{\phi_r\}$ so that $\phi_r(u_j) =x^{s_k}$, where $|s_k| >2 |c_0|$ for all $j=1, \dots, k$, we have that $\phi_r' (h)=g_1' x^{t_1} \dots g_k' x^{t_k} g_{k+1}'$, $|t_i| \ge |s_i| - 2|c_0| > 1$, $g_i' \notin C(x)$. Indeed, to see this it suffices to notice that since $g_i \notin C_\GG(x)$, so $x^{\epsilon c_0} g_i x^{\delta c_0}$, $\epsilon, \delta \in \{\pm 1\}$ is non-trivial and it does not belong to $C_\GG(x)$. Hence, if we view $\GG$ as the HNN-extension $\GG=\langle \GG_{n-1}, x \mid (C(x) \cap \GG_{n-1})^x=(C(x) \cap \GG_{n-1}) \rangle$, where $\GG_{n-1}$ is the partially commutative group generated by all generators of $\GG$ but $x$, then $\phi_r'(h)$ is a reduced element of $\GG$. It follows that $\phi_r'(h)$ is non-trivial for an infinite family of homomorphisms $\phi_r'$. Therefore the family $\phi_r'$ discriminates $\HH$ to $\GG$.
\end{proof}

\begin{prop}\label{prop:influniveq}
The partially commutative group $\GG(\Delta)$ and its $m$-inflation $\GG(\Delta_m)$ are universally equivalent, for all $m\in \mathbb{N}$.
\end{prop}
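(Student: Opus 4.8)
The plan is to show that $\GG(\Delta)$ and its $m$-inflation $\GG(\Delta_m)$ discriminate each other, and then to invoke the characterisation of universal equivalence in terms of mutual discrimination (see \cite{remes}): two pc groups are universally equivalent if and only if each is discriminated by the other. For $m=1$ there is nothing to prove, so I would assume $m\ge 2$.

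One direction is immediate. Since $\Delta$ is an induced subgraph of its $m$-inflation, sending each generator of $\GG(\Delta)$ to one of its copies in $\GG(\Delta_m)$ defines an injective homomorphism $\iota\colon\GG(\Delta)\to\GG(\Delta_m)$. A single injective homomorphism is injective on every finite subset, so $\{\iota\}$ is a discriminating family and $\GG(\Delta)$ is discriminated by $\GG(\Delta_m)$.

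For the reverse direction I would realise $\GG(\Delta_m)$ as the final term of a finite chain of free centraliser extensions starting from $\GG(\Delta)$, inflating one vertex at a time. Enumerate $V(\Delta)=\{v^{(1)},\dots,v^{(N)}\}$, put $\Delta^{(0)}=\Delta$, and let $\Delta^{(j)}$ be obtained from $\Delta^{(j-1)}$ by replacing $v^{(j)}$ by a clique of $m$ copies $v^{(j)}_1,\dots,v^{(j)}_m$, each joined to the same vertices as $v^{(j)}$. Each $\GG(\Delta^{(j)})$ is again a pc group (a clique is just a free abelian vertex group) and $\Delta^{(N)}=\Delta_m$. The key observation is that, writing $x=v^{(j)}_1$, the centraliser of the generator $x$ in $\GG(\Delta^{(j-1)})$ equals $\langle\St(x)\rangle$ — by the Centraliser Theorem \ref{thm:centr}, since $\BA(x)$ is generated by the neighbours of $x$ — while the newly added generators $A=\langle v^{(j)}_2,\dots,v^{(j)}_m\rangle\simeq\BZ^{m-1}$ commute with $C(x)$ and with nothing else. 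Hence
$$
\GG(\Delta^{(j)})=\langle\,\GG(\Delta^{(j-1)}),\,A \mid [A,C(x)]=1\,\rangle
$$
is precisely the free extension of the centraliser of $x$ by $\BZ^{m-1}$, so by Lemma \ref{lem:disextcen} the group $\GG(\Delta^{(j)})$ is discriminated by $\GG(\Delta^{(j-1)})$. Since discrimination is transitive — composing a homomorphism injective on a finite set $S$ with one injective on its (finite) image yields a homomorphism injective on $S$ — it follows that $\GG(\Delta_m)=\GG(\Delta^{(N)})$ is discriminated by $\GG(\Delta)$.

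Combining the two directions, $\GG(\Delta)$ and $\GG(\Delta_m)$ discriminate each other and are therefore universally equivalent. The only genuinely delicate point is the centraliser bookkeeping along the chain: one must check that when $v^{(j)}$ is inflated, the copies of neighbours created at earlier steps already lie in $\St(v^{(j)}_1)$, so that $A$ really does commute with the whole of $C(v^{(j)}_1)$ and the step is an honest centraliser extension. This holds because $v^{(j)}_1$ was joined to each such copy exactly when the corresponding neighbour was inflated. I expect this adjacency verification, rather than the application of Lemma \ref{lem:disextcen}, to be the main obstacle.
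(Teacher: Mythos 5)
Your proposal is correct and follows essentially the same route as the paper: the paper likewise realises $\GG(\Delta_m)$ as an iterated chain of free extensions of centralisers of generators, $\GG(\Delta)=\GG_0<\GG_1<\dots<\GG_n=\GG(\Delta_m)$, applies Lemma \ref{lem:disextcen} at each step (noting, as you do, that each intermediate group is again a pc group), and concludes via the discrimination criterion for universal equivalence. Your extra care about the adjacency bookkeeping is sound but is absorbed in the paper by taking the centraliser $C_{\GG_{i-1}}(x_i)$ in the current intermediate group, which automatically contains the copies created at earlier steps.
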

\begin{proof}
Since partially commutative groups are linear and since $\GG(\Delta) < \GG(\Delta_m)$, in order to show that $\GG(\Delta)$ and $\GG(\Delta_m)$ are universally equivalent it suffices to prove that $\GG(\Delta_m)$ is discriminated by $\GG(\Delta)$, see \cite{DMR}. 

Observe that the $m$-inflation group $\GG(\Delta_m)$ can be constructed as an iterated sequence of extension of centralisers of the generators $x_i$, $i=1, \dots,n$ of the partially commutative group $\GG(\Delta)$, that is
\begin{gather}
\begin{split}
\GG(\Delta) & = \GG_0< \\
& < \GG_1=\langle \GG(\Delta), y_{12}, \dots, y_{1m} \mid [y_{1j}, C_{\GG_0}(x_1)]=1, [y_{1i},y_{1j}]=1, 1\le i<j \le m \rangle < \\
& < \GG_2<\dots <\GG_{n-1}< \\
& < \GG_n= \left< \GG_{n-1}, y_{n2}, \dots, y_{nm} \left| 
\begin{array}{l}
 [y_{nj}, C_{\GG_{n-1}}(x_n)]=1, \\
 \left[y_{ni},y_{nj}\right]=1, 1\le i<j \le m
\end{array}
 \right.\right> = \GG(\Delta_m).
\end{split}
\end{gather}

\smallskip

Since a free extension of a centraliser of a generator of a partially commutative group is again a partially commutative group, the statement follows from Lemma \ref{lem:disextcen}.
\end{proof}

\begin{cor}\label{cor:onlyif}
If there exist embeddings $\varphi:\GG(\Delta) \to \GG(\Gamma_n)$ and $\psi:\GG(\Gamma) \to \GG(\Delta_m)$ for some $n,m\in \BN$, then $\GG(\Delta) \equiv_\forall \GG(\Gamma)$.
\end{cor}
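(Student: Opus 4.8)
The plan is to deduce this purely formally from two ingredients already in hand: the behaviour of universal theories under subgroups, and Proposition \ref{prop:influniveq}, which identifies a pc group with its inflation up to universal equivalence. Note that the hypothesis only asks for the existence of embeddings (not tame ones), and indeed the argument will use nothing about the embeddings beyond their existence.

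First I would record the elementary model-theoretic fact that embeddings reverse inclusion of universal theories: if a group $H$ embeds into a group $G$, then $\Th_\forall(G) \subseteq \Th_\forall(H)$. The reason is that a universal sentence $\forall \bar x\, \phi(\bar x)$ with $\phi$ quantifier-free, if true in $G$, must hold for every tuple $\bar x$ taken from the subgroup $H$ as well, since such tuples are in particular tuples of $G$; thus every universal sentence valid in $G$ is valid in $H$. (Here I identify $H$ with its image under the embedding.)

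Next I would apply this to each of the two given embeddings in turn. From $\varphi:\GG(\Delta)\to\GG(\Gamma_n)$ we obtain $\Th_\forall(\GG(\Gamma_n)) \subseteq \Th_\forall(\GG(\Delta))$; invoking Proposition \ref{prop:influniveq}, which gives $\GG(\Gamma)\equiv_\forall \GG(\Gamma_n)$ and hence $\Th_\forall(\GG(\Gamma)) = \Th_\forall(\GG(\Gamma_n))$, this becomes $\Th_\forall(\GG(\Gamma)) \subseteq \Th_\forall(\GG(\Delta))$. Symmetrically, from $\psi:\GG(\Gamma)\to\GG(\Delta_m)$ together with $\GG(\Delta)\equiv_\forall \GG(\Delta_m)$ I get $\Th_\forall(\GG(\Delta)) = \Th_\forall(\GG(\Delta_m)) \subseteq \Th_\forall(\GG(\Gamma))$.

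Combining the two inclusions yields $\Th_\forall(\GG(\Delta)) = \Th_\forall(\GG(\Gamma))$, that is, $\GG(\Delta)\equiv_\forall \GG(\Gamma)$, as required. There is no genuine obstacle here: the whole content has been front-loaded into Proposition \ref{prop:influniveq}, and the only point requiring care is keeping the direction of the two inclusions straight (embedding into the \emph{inflation} is what lets the inflation Proposition convert each one-sided inclusion back into a statement about $\GG(\Gamma)$ and $\GG(\Delta)$ themselves).
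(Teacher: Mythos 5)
Your proposal is correct and is essentially identical to the paper's own proof: both arguments combine the fact that an embedding $H \hookrightarrow G$ gives $\Th_\forall(G) \subseteq \Th_\forall(H)$ with Proposition \ref{prop:influniveq} ($\GG(\Lambda) \equiv_\forall \GG(\Lambda_k)$), applied once in each direction to obtain the two inclusions and hence equality. Your added justification of the elementary model-theoretic fact is fine and does not change the route.
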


\begin{proof}
If $\GG(\Delta)$ embeds into $\GG(\Gamma_n)$, then $\Th_\forall(\GG(\Gamma_n)) \subset \Th_\forall(\GG(\Delta))$ and so it follows from Proposition \ref{prop:influniveq} that  $\Th_\forall(\GG(\Gamma)) = \Th_\forall(\GG(\Gamma_n)) \subset \Th_\forall(\GG(\Delta))$. Similarly, if $\GG(\Gamma) < \GG(\Delta_m)$, then $\Th_\forall(\GG(\Delta)) \subset \Th_\forall(\GG(\Gamma))$ and so the statement follows.
\end{proof}

\medskip

We now prove that if the existential theory of $\GG(\Delta)$ is contained in the existential theory of $\GG(\Gamma)$, then there exists a tame embedding from $\GG(\Delta)$ to the $n$-inflation $\GG(\Gamma_n)$. The proof relies on the following intermediate result on embeddability, which we think is interesting on its own right. We show that in order to determine if a tame map between pc groups is an embedding, it suffices to check if it is injective on a finite set of elements. More precisely, we prove in Lemma \ref{cor:embfinitecond} that given two simplicial graphs $\Delta$ and $\Gamma$, $|V(\Delta)|=n$, there exists a finite set of words $W \subset F(x_1, \dots, x_n)$ such that if a tame map $f: \GG(\Delta) \to \GG(\Gamma)$ is injective on the set $W$, then $f$ is an embedding.

The strategy of the proof is as follows. One can encode the existence of a map which is injective on the finite set of words $W$ in an existential sentence $\varphi_{\Delta,W}$, i.e. if a tuple $\overline g$ from $\GG(\Gamma)$ witnesses the existential sentence $\varphi_{\Delta, W}$, then $W(g)=1$ in $\GG(\Gamma)$ if and only if $W=1$ in $\GG(\Delta)$. The group $\GG(\Delta)$ satisfies the sentence $\varphi_{\Delta, W}$, say in the tuple of generators. Therefore, if $\Th_\exists(\GG(\Delta)) \subset \Th_\exists(\GG(\Gamma))$, it follows that $\GG(\Gamma) \models \varphi_{\Delta,W}(\overline g)$, for some tuple $\overline g$ and so the map $\overline x \to \overline g$ is injective on the set $W$. To assure that this map is tame, we inflate the graph $\Gamma$; finally, we use the intermediate result that tame maps which are injective on the set $W$ are embeddings to deduce the existence of an embedding from $\GG(\Delta)$ to $\GG(\Gamma_n)$.

The outline of the proof of the intermediate embedding result is as follows. We first show that to any $n$-tuple of elements $\overline g$ from $\GG(\Gamma)$, one can associate a finite ($n$-coloured) subgraph $\Lambda_{\overline g}$ of $\Gamma_n^e$ whose number of vertices is bounded by $\Delta$ and $\Gamma$. Thus, one can define a map from the set $\GG(\Gamma)^n$ to a finite set of finite subgraphs of $\Gamma_n^e$. In turn, to a (coloured) graph $\Lambda_{\overline g}$, we associate an $n$-tuple $\overline a$ of elements from $\GG(\Gamma_n)$. A crucial point is that if the tuple of elements $\overline g$ witnesses an existential sentence $\varphi$ in $\GG(\Gamma)$, that is $\GG(\Gamma) \models \varphi(\overline g)$,  then so does the $n$-tuple $\overline a$, i.e. $\GG(\Lambda_{\overline g})\models \varphi(\overline a)$; see Lemma \ref{lem:elemgraph}. The fact that the number of graphs $\Lambda_{\overline g}$ is finite allows us to prove that there is a Noetherian-type property for the set of existential sentences:
$$
\varphi_{\Delta, m}= \exists x_1, \dots, x_n \left( \bigwedge\limits_{(x_i,x_j)\in E(\Delta)} [x_i,x_j]=1 \wedge \bigwedge\limits_{i=1, \dots, m} w_i \ne 1\right),
$$
where $m\in \BN$ and $w_i$ belongs to the infinite set of words $\mathcal W$ that represent non-trivial elements of $\GG(\Delta)$, i.e. 
$$
w_i \in \mathcal W=\{ w_i \in F(x_1, \dots, x_n) \mid w_i \ne 1 \hbox{ in } \GG(\Delta)\}.
$$

In other words, we show that there exist finitely many words $w_1, \dots, w_s \in \mathcal W$ such that if there is a tuple $\overline a$ that witnesses the existencial sentence defined by these finitely many words, i.e. there exists $\overline a$ such that  $\GG(\Lambda_{\overline g}) \models \varphi_{\Delta,s}(\overline a)$ for some $\Lambda_{\overline g}$, then the tuple $\overline a$ witnesses every existencial sentence of the  infinite family, that is  $\GG(\Lambda_{\overline g}) \models \varphi_{\Delta, m}(\overline a)$, for all $m\in \mathcal \BN$. In terms of embeddings, the latter statement can be read as follows: if the map from $\GG(\Delta)$ to $\GG(\Lambda_{\overline{g}})$ that sends the tuple of generators $\overline x$ of $\GG(\Delta)$ to $\overline a$ is injective on the set of words $w_1, \dots, w_s$, then that map is injective on the set of \emph{all} non-trivial words of $\GG(\Delta)$ and so it is an embedding. Let us formalise these ideas.

\smallskip

A \emph{coloured graph} is a graph $\Gamma$ together with a map from the set of vertices $V(\Delta)$ to a finite set $S$. Elements of the set $S$ are called \emph{colours} and the map, a \emph{colouring}. 

We now turn our attention to the construction of coloured graphs associated to tuples of elements from $\GG(\Gamma)$. Informally, given an $n$-tuple of elements $\overline g$ from $\GG(\Gamma)$, one can consider the set of blocks that appear in the decomposition of the elements of the tuple. Note that a block can appear in the decomposition of two different elements. In order to avoid this (and thinking towards obtaining a tame map), we consider the $n$-inflation of $\Gamma$ and replace the blocks that appear in decompositions of different elements by the same word in new generators; doing so, we end up with at most $dn$ different blocks (where $d$ is the maximal size of a clique in $\Gamma$). Taking powers of these  blocks, we get a pc group with underlying graph $\Lambda_{\overline g}$, see \cite{Koberda}. We associate $\Lambda_{\overline g}$ to the tuple $\overline g$. Formally,

\begin{defn}\label{def:graphasstog}
Let $V(\Gamma)=\{y_1, \dots, y_m\}$ and $V(\Gamma_n)=\{y_1, \dots, y_{1n}, \dots, y_m, \dots, y_{mn}\}$, where $C(y_{i,j})=C(y_i)$ in $\GG(\Gamma_n)$, $j=2, \dots,n$.

Let $\overline g=(g_1, \dots, g_n)$ be a tuple of elements in $\GG(\Gamma)$. Let $g_i= b_{i1} \cdots b_{ir_i}$ be the block decomposition of $g_i$, $i=1, \dots, n$. By \cite{Koberda}, large enough powers of block elements generate a partially commutative group with underlying graph $\Lambda'$, where $\Lambda'$ is an induced subgraph of $\Gamma^e$. Let $i:\Lambda' \to \Gamma^e$ be an embedding, $i(b_{ij})=y_{ij}^{w_{ij}}$, where $y_{ij}$ is a generator of $\GG(\Gamma)$.

Note that by definition of the block decomposition, it follows that $b_{ij} \ne b_{ik}$ if $i \ne k$. Hence, a block $b_{ij}$ repeats at most $n$ times in the list $\{b_{11}, \dots, b_{n r_n}\}$ and so $y_{ij}^{w_{ij}}$ repeats at most $n$ times in $\{ y_{11}^{w_{11}}, \dots, y_{nr_n}^{w_{nr_n}}\}$.

We replace the vertices $\{ y_{11}^{w_{11}}, \dots, y_{nr_n}^{w_{nr_n}}\}$ of $\Gamma^e$ by vertices of $\Gamma_n^e$ as follows. If 
$$
y_{i_1j_1}^{w_{i_1j_1}}=\dots=y_{i_rj_r}^{w_{i_rj_r}}
$$ 
are all the repetitions of the element $y_t^{w_{t}}$, $y_t \in V(\Gamma)$, then $r\le n$ and we replace $y_{i_sj_s}^{w_{i_sj_s}}$ by $y_{ts}^{w_{i_sj_s}}$, where $y_{ts} \in V(\Gamma_n)$, $s=2, \dots, r$.

By doing this replacement, we obtain a list of pairwise different vertices of $\Gamma_n^e$ that we denote by $\{a_{11}, \dots, a_{1r_1}, \dots, a_{n1}, \dots, a_{nr_n}\}$. Let $\Lambda_{\overline g}$ be the graph induced by this set in $\Gamma_n^e$. We colour the vertices $a_{ij}$, $j=1, \dots, r_i$ with the colour $i$ and so $\Lambda_{\overline g}$ is $n$-coloured.

Notice that by construction the number of vertices of $\Lambda_{\overline g}$ is bounded by $nd$, where $d= \max\{ |V(C)| \mid C \hbox{ is a clique in } \Gamma\}$  and $n=|V(\Delta)|$. Hence the bound $nd$ is independent of the tuple $\overline g$.

To a tuple $\overline g=(g_1, \dots, g_n)$ in $\GG(\Gamma)$, we associate the finite $n$-coloured graph $\Lambda_{\overline g} < \Gamma_n^e$ just constructed.
\end{defn}

We next show that if the tuple $\overline g$ witnesses a (particular type of) existential sentence, so does the tuple $\overline a= (a_{11} \cdots a_{1r_1}, \dots, a_{n1}\cdots,a_{nr_n})$ defined above.

Given a simplicial graph $\Delta$, $V(\Delta)=\{x_1, \dots,x_n\}$ and a finite set of words $W=\{w_1, \dots, w_k\}$, where $w_i \in F(x_1, \dots, x_n)$, $i=1, \dots, k$, we define the formula $\varphi_{\Delta, W}$ as follows:
$$
\varphi_{\Delta, W} = \exists x_1, \dots, x_n \left( \bigwedge\limits_{(x_i,x_j)\in E(\Delta)} [x_i,x_j]=1 \wedge \bigwedge\limits_{i=1, \dots, k} w_i \ne 1\right)
$$

\begin{lemma}\label{lem:elemgraph}
In the above notation, if 
$$
\GG(\Gamma) \models \varphi_{\Delta, W}(\overline g),
$$ 
then 
$$
\GG(\Lambda_{\overline g}) \models \varphi_{\Delta, W}(\overline a).
$$
\end{lemma}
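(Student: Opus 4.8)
The statement to establish has two parts, matching the two conjuncts of $\varphi_{\Delta,W}$. Writing $a_i=a_{i1}\cdots a_{ir_i}$, I must show: (i) the commutation relations carry over, i.e. if $(x_i,x_j)\in E(\Delta)$ and $[g_i,g_j]=1$ in $\GG(\Gamma)$, then $[a_i,a_j]=1$ in $\GG(\Lambda_{\overline g})$; and (ii) the non-triviality carries over, i.e. $w_l(\overline g)\neq 1$ in $\GG(\Gamma)$ implies $w_l(\overline a)\neq 1$ in $\GG(\Lambda_{\overline g})$ for each $w_l\in W$. The hinge of both parts is a dictionary between commutation of the blocks $b_{ij}$ in $\GG(\Gamma)$ and adjacency of the corresponding vertices $a_{ij}$ in the induced subgraph $\Lambda_{\overline g}\subseteq\Gamma_n^e$.

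I would do the non-triviality part first, as it is the clean one. The plan is to produce a homomorphism $\pi\colon\GG(\Lambda_{\overline g})\to\GG(\Gamma)$ sending each generator $a_{ij}$ of the pc group $\GG(\Lambda_{\overline g})$ to the block $b_{ij}$. To see this is well defined I would use the retraction $\rho\colon\GG(\Gamma_n)\to\GG(\Gamma)$ collapsing the inflation clones, which by the construction of Definition \ref{def:graphasstog} satisfies $\rho(a_{ij})=i(b_{ij})$, the vertex of $\Gamma^e$ attached to $b_{ij}$. Every defining relation of $\GG(\Lambda_{\overline g})$ is a commutation $[a_{ij},a_{kl}]=1$ coming from an edge of $\Gamma_n^e$; applying $\rho$ gives $[i(b_{ij}),i(b_{kl})]=1$ in $\GG(\Gamma)$, and since $i$ is a graph embedding this forces $[b_{ij},b_{kl}]=1$ (the case $b_{ij}=b_{kl}$ being trivial). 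Hence $\pi$ is a homomorphism, and by multiplicativity $\pi(a_i)=\pi(a_{i1})\cdots\pi(a_{ir_i})=b_{i1}\cdots b_{ir_i}=g_i$. Consequently $\pi(w_l(\overline a))=w_l(g_1,\dots,g_n)=w_l(\overline g)$, so triviality of $w_l(\overline a)$ would force triviality of $w_l(\overline g)$; the contrapositive is exactly (ii).

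For the commutation part, suppose $[g_i,g_j]=1$. First I would show that every block of $g_i$ commutes with every block of $g_j$. Writing $g_i=h^{-1}\hat g_i h$ with $\hat g_i$ cyclically reduced, the Centraliser Theorem (Theorem \ref{thm:centr}) together with $C(w)=C(\sqrt{w})$ describes $C(g_i)$ as a conjugate of $\langle\sqrt{v_1}\rangle\times\cdots\times\langle\sqrt{v_k}\rangle\times\BA(g_i)$, where $v_1,\dots,v_k$ are the blocks of $g_i$; since $g_j\in C(g_i)$, its block decomposition consists of factors each of which is a power of some $\sqrt{v_l}$ or lies in $\BA(g_i)$, and in either case commutes with all the $v_l$. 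This gives $[b_{is},b_{jt}]=1$ for all $s,t$. The second step is the transport: I must check that $[b_{is},b_{jt}]=1$ implies $(a_{is},a_{jt})\in E(\Lambda_{\overline g})$. Granting this, the conclusion is immediate, for the $a_{is}$ pairwise commute, the $a_{jt}$ pairwise commute, and every $a_{is}$ commutes with every $a_{jt}$, so the products $a_i$ and $a_j$ commute.

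The hard part will be exactly this last transport, which amounts to understanding the extension graph of an inflation. The \emph{easy} direction (adjacency in $\Gamma_n^e$ descends under $\rho$ to commutation in $\GG(\Gamma)$) is what powered part (ii); the reverse lifting is the delicate one. The crucial feature of the construction is that the conjugators $w_{ij}$ attached to the $a_{ij}$ lie in $\GG(\Gamma)$, namely the first clones, while the $a_{ij}$ are themselves clones of generators conjugated by these words. Hence a relation $[i(b_{is}),i(b_{jt})]=1$, say $[y_p^{w_{is}},y_q^{w_{jt}}]=1$, holding in $\GG(\Gamma)$ should lift to the corresponding clones in $\GG(\Gamma_n)$, since clones of a generator share its star and the only new commutations created by the inflation are the harmless ones among clones of a single generator. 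I would make this precise using cancellation theory in pc groups \cite{CK1} together with the Centraliser Theorem to compare $C(y_{p\sigma})$ in $\GG(\Gamma_n)$ with $C(y_p)$ in $\GG(\Gamma)$. This is the one genuinely technical point of the lemma; everything else is formal.
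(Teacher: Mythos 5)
Your proposal is correct and follows essentially the same route as the paper: your map $a_{ij}\mapsto b_{ij}$ is exactly the paper's composite $\pi_2\, i^{-1}\pi_1$ (clone-collapse, then the inverse of Koberda's powers isomorphism, then $b_{ij}^N\mapsto b_{ij}$), used for the same purpose of pulling the inequations $w_l(\overline a)\ne 1$ back from $w_l(\overline g)\ne 1$, and your transport of commutation rests on the same two facts the paper invokes, namely the isomorphism $i$ on high powers of blocks and the equality $C(y_{ts})=C(y_t)$ in $\GG(\Gamma_n)$. Your blockwise refinement of the commutation step (first deducing $[b_{is},b_{jt}]=1$ for all $s,t$ from the Centraliser Theorem, then lifting each adjacency to the clones) is a more detailed rendering of the paper's terse product-level assertion rather than a different argument, and the ``genuinely technical'' lifting you flag in fact reduces to the two-line observation that $C(y_{p\sigma})=\langle\St(y_{p\sigma})\rangle=\langle\St(y_p)\rangle=C(y_p)$, applied once on each side of the commutator.
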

\begin{proof}
In the notation of Definition \ref{def:graphasstog}, it follows from the construction of the graph $\Lambda_{\overline g}$ that the map $a_{ij}=y_{ts}^{w_{i_sj_s}} \to y_{t}^{w_{i_sj_s}}$ induces an epimorphism:
$$
\pi_1: \GG(\Lambda_{\overline g}) \to \GG(\Lambda').
$$

On the other hand, it was shown in \cite{Koberda} that for $N$ large enough the elements $\{b_{ij}^N\}$ generate a partially commutative group  and so the homomorphism $$
\pi_2: \langle b_{11}^N, \dots, b_{nr_n}^N \rangle \to \langle b_{11}, \dots, b_{nr_n} \rangle
$$ 
induced by the map $b_{ij}^N \to b_{ij}$ precomposed with the inverse of the isomorphism $i: \langle b_{11}^N, \dots, b_{nr_n}^N \rangle \to \GG(\Lambda')$ induces an epimorphism:
$$
\pi_2 i^{-1}: \GG(\Lambda') \to \langle b_{11}, \dots, b_{1r_1}, \dots, b_{n1}, \dots, b_{nr_n} \rangle < \GG(\Gamma)
$$
Furthermore, it follows from the definitions that 
\begin{equation}
\begin{split}
\pi_2 i^{-1} \pi_1(a_{i1} \cdots a_{ir_i}) & = \pi_2 i^{-1} (y_{i1}^{w_{i1}} \cdots y_{ir_i}^{w_{ir_i}}) = \\ & = \pi_2 ( b_{i1}^N \cdots b_{ir_i}^N) = b_{i1}\cdots b_{ir_i}= g_i,
\end{split}
\end{equation}
where $i=1, \dots, n$.

Assume that $\GG(\Gamma) \models \varphi_{\Delta, W} (\overline g)$, i.e 
\begin{itemize}
\item $[g_i, g_j]=1$, if $(x_i,x_j) \in E(\Delta)$ and
\item $w(\overline g)\ne 1$.
\end{itemize} 
We want to show that 
$$
\GG(\Lambda_{\overline g}) \models \varphi_{\Delta, W}(\overline{a_1}, \dots, \overline{a_n}),
$$
where $\overline{a_i}=a_{i1}\cdots a_{ir_i}$, $i=1, \dots, n$, i.e.
\begin{itemize}
\item $[\overline{a_i}, \overline{a_j}]=1$, if $(x_i,x_j) \in E(\Delta)$ and
\item $w(\overline{a_1}, \dots, \overline{a_n})\ne 1$.
\end{itemize} 

On the one hand, the above equalities hold since centralisers in pc groups are isolated. Indeed, if $[g_i, g_j]=1$, since $g_i=b_{i1}\cdots b_{ir_i}$, it follows that $[b_{i1} \cdots b_{ir_i}, b_{j1} \cdots b_{jr_j} ]=1$ and so $[b_{i1}^N \cdots b_{ir_i}^N, b_{j1}^N \cdots b_{jr_j}^N ]=1$. Using the isomorphism $i$ and the fact that $C(y_{ts})=C(y_t)$, we have that $[a_{i1} \cdots a_{ir_i}, a_{j1} \cdots a_{jr_j} ]=1$.

On the other hand, the existence of the morphism $\pi_2 \pi_1$ assures that the above inequalities hold. Indeed, $\pi_2(\pi_1(w(\overline{a_1}, \dots, \overline{a_n})))=w(g_1, \dots, g_n)$ and since $\GG(\Gamma) \models \varphi_{\Delta, W} (\overline g)$, we have that $w(g_1, \dots, g_n) \ne 1$. It follows that $w(\overline{a_1}, \dots, \overline{a_n})\ne 1$. 
\end{proof}

\begin{remark}
Notice that the witness of the formula $\varphi_{\Delta,W}$ in $\GG(\Lambda_{\overline g})$ only depends on the colouring of the graph $\Lambda_{\overline g}$ and not on $\overline g$, that is if $\GG(\Gamma) \models \varphi_{\Delta,W}(\overline g) \wedge \varphi_{\Delta,W'}(\overline h)$ and $\Lambda_{\overline g}=\Lambda_{\overline h}$ as coloured graphs, then $\GG(\Lambda_{\overline g})=\GG(\Lambda_{\overline h}) \models \varphi_{\Delta,W} (a_{11}\cdots a_{1r_1}, \dots, a_{n1}\cdots a_{nr_n}) \wedge \varphi_{\Delta,W'} (a_{11}\cdots a_{1r_1}, \dots, a_{n1}\cdots a_{nr_n})$.
\end{remark}

\begin{remark}
{\rm Lemma \ref{lem:elemgraph}} shows that inflation of the graphs is necessary. In general, if one considers the graph defined by the block elements {\rm(}without replacing repetitions{\rm)} the statement is trivially false: consider the tuple $\overline g= (a, a^2)$ in $\GG(\Gamma)$; then $\Lambda'$ is the graph with one vertex labeled by $A$ and $\Lambda_{\overline g}$ would be the graph with two vertices $A$ and $A'$ and one edge between them. Let $\varphi$ be the following existential sentence $\exists x,y  \ x\ne y$. Then $\GG(\Gamma) \models \varphi(\overline g)$, $\GG(\Lambda_{\overline g}) \models \varphi(A,A')$ but $\GG(\Lambda') \not\models \varphi(A,A)$.
\end{remark}

\begin{defn}
Given two simplicial graphs, $\Delta$ and $\Gamma$, $|V(\Delta)|=n$ and $w\in F(x_1, \dots, x_n)$, define $wit(w)$ to be the set of $n$-coloured graphs $\Lambda$ with at most $nd$ vertices for which there exist $\overline g \in \GG(\Gamma)^n$ such that $\GG(\Gamma) \models \varphi_{\Delta,w} (\overline g)$ and $\Lambda=\Lambda_{\overline g}$ (as coloured graphs), i.e.
$$
wit(w)=\{ \Lambda < \Gamma_n^e \mid |V(\Lambda)| \le nd, \Lambda=\Lambda_{\overline g}, \GG(\Gamma) \models \varphi_{\Delta,w} (\overline g) \},
$$
where $d$ is the maximal size of a clique in $\Gamma$.

Note that since the number of vertices of the graphs $\Lambda$ is bounded, the set $wit(w)$ is finite.
\end{defn}

\begin{defn}\label{def:witness}
Let $\Delta$ be a simplicial graph, $|V(\Delta)|=n$ and let $\mathcal W \subset F(x_1, \dots, x_n)$ be a (possibly infinite) set of words. A set of $i$-\emph{witnesses} for $\Delta$, $\Gamma$ and $\mathcal W$ is a finite set of words $w_1, \dots, w_s \in \mathcal W$ such that
\begin{enumerate}
\item $wit(w_i) \ne \emptyset$, $i=1, \dots, s$;
\item $ \emptyset \subseteq \bigcap\limits_{i=1, \dots, s} wit(w_i) \subsetneq \bigcap\limits_{i=1, \dots, s-1} wit(w_i) \subsetneq \dots \subsetneq \bigcap\limits_{i=1,2} wit(w_i) \subsetneq wit(w_1);$
\item for any $w\in \mathcal W$ such that $wit(w) \ne \emptyset$, we have that 
$$
\bigcap\limits_{i=1, \dots, s} wit(w_i) \cap wit(w) = \bigcap\limits_{i=1, \dots, s} wit(w_i).
$$
\end{enumerate}

Note that if $\Gamma$ is non-empty, then the set of $i$-witnesses always exists. Indeed, it suffices to notice that there are words  $w$ so that $wit(w)\neq \emptyset$, for instance, the word $w=x_1$. Then the set of $i$-witnesses exists since the set $wit(w)$ is finite, for every word $w$.
\end{defn}

\begin{lemma}\label{lem:i-witness}
Let $\Delta$ and $\Gamma$ be simplicial graphs, $|V(\Delta)|=n$. Let $\mathcal W < F(x_1, \dots, x_n)$ be the set of words such that $w\ne 1$ in $\GG(\Delta)$. If $\Th_\exists(\GG(\Delta)) \subset \Th_\exists(\GG(\Gamma))$, then there exists a set of $i$-witnesses $W=\{w_1, \dots, w_s\}$ such that $\bigcap\limits_{i=1, \dots, s} wit(w_i)$ is non-empty.
\end{lemma}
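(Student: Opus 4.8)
The plan is to exploit that, although $\mathcal W$ is infinite, all the sets $wit(w)$ live inside a single \emph{finite} universe, so a na\"ive greedy/minimality argument will terminate. Let $\mathcal U$ be the finite collection of $n$-coloured graphs on at most $nd$ vertices, considered up to coloured isomorphism (there are finitely many, which is exactly the observation already recorded after Definition \ref{def:graphasstog} that each $wit(w)$ is finite). By construction every $\Lambda_{\overline g}$ lies in $\mathcal U$, so $wit(w)\subseteq \mathcal U$ for all $w\in\mathcal W$, and the whole argument takes place in the finite Boolean lattice of subsets of $\mathcal U$.

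First I would verify that $wit(w)\ne\emptyset$ for \emph{every} $w\in\mathcal W$. Indeed, if $w\ne 1$ in $\GG(\Delta)$, then the tuple of canonical generators $\overline x$ witnesses $\varphi_{\Delta,w}$ in $\GG(\Delta)$: the defining relations of $\Delta$ hold among the $x_i$, and $w(\overline x)\ne 1$ by the choice of $w\in\mathcal W$. Hence $\varphi_{\Delta,w}\in\Th_\exists(\GG(\Delta))\subseteq\Th_\exists(\GG(\Gamma))$, so $\GG(\Gamma)\models\varphi_{\Delta,w}(\overline g)$ for some $\overline g$, and then $\Lambda_{\overline g}\in wit(w)$. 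This establishes condition (1) in Definition \ref{def:witness} for any words we later select, and shows that the hypothesis "$wit(w)\ne\emptyset$" appearing in condition (3) is automatically met for every $w\in\mathcal W$.

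Next I would build the words greedily. Pick any $w_1\in\mathcal W$ and set $I_1=wit(w_1)$. Having chosen $w_1,\dots,w_k$ with $I_k=\bigcap_{i=1}^k wit(w_i)$, if there is some $w\in\mathcal W$ with $I_k\cap wit(w)\subsetneq I_k$, set $w_{k+1}=w$ and $I_{k+1}=I_k\cap wit(w_{k+1})$; otherwise stop. Since $|I_1|\le|\mathcal U|<\infty$ and each step strictly decreases cardinality, the process terminates, say at step $s$, producing a strictly decreasing chain $wit(w_1)\supsetneq I_2\supsetneq\dots\supsetneq I_s$, which is exactly condition (2). The stopping criterion says precisely that $I_s\cap wit(w)=I_s$ for every $w\in\mathcal W$, which is condition (3). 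Thus $\{w_1,\dots,w_s\}$ is a set of $i$-witnesses.

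Finally — and this is the one genuinely non-formal step — I would show $I_s=\bigcap_{i=1}^s wit(w_i)\ne\emptyset$ by passing to the \emph{single} combined sentence $\varphi_{\Delta,\{w_1,\dots,w_s\}}$. The generators $\overline x$ of $\GG(\Delta)$ satisfy it (the relations of $\Delta$ hold and $w_i(\overline x)\ne 1$ for each $i$, since $w_i\in\mathcal W$), so by $\Th_\exists(\GG(\Delta))\subseteq\Th_\exists(\GG(\Gamma))$ there is a tuple $\overline g$ with $\GG(\Gamma)\models\varphi_{\Delta,\{w_1,\dots,w_s\}}(\overline g)$. This one witness $\overline g$ simultaneously satisfies $\varphi_{\Delta,w_i}(\overline g)$ for every $i$, whence $\Lambda_{\overline g}\in wit(w_i)$ for all $i$ and so $\Lambda_{\overline g}\in I_s$. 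I expect this to be the crux: non-emptiness of the finite intersection does \emph{not} follow formally from each $wit(w_i)$ being non-empty, but is precisely the model-theoretic fact that the conjunction of the finitely many inequalities is realisable in $\GG(\Gamma)$ — the greedy construction having reduced an a priori infinite family of constraints to finitely many words. The only delicate bookkeeping will be checking that the stopping condition of the greedy process coincides verbatim with condition (3), and that the coloured-isomorphism-type universe $\mathcal U$ is genuinely finite so that the chain must terminate.
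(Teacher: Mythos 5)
Your proposal is correct and follows essentially the same route as the paper: the existence and chain conditions for the $i$-witnesses come from the finiteness of the sets $wit(w)$ (your greedy construction makes explicit the observation already recorded after Definition \ref{def:witness}), and the crux of non-emptiness is exactly the paper's step of transferring the single combined sentence $\varphi_{\Delta,\{w_1,\dots,w_s\}}$, witnessed by the canonical generators in $\GG(\Delta)$, to $\GG(\Gamma)$ via $\Th_\exists(\GG(\Delta)) \subset \Th_\exists(\GG(\Gamma))$, so that one tuple $\overline g$ places $\Lambda_{\overline g}$ in every $wit(w_i)$ simultaneously. You correctly identify that non-emptiness of the intersection is a model-theoretic fact rather than a formal consequence of each $wit(w_i)$ being non-empty, which is precisely where the paper's argument also places the weight.
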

\begin{proof}
Let $V(\Delta)=\{X_1, \dots, X_n\}$. It follows from the definition of $\mathcal W$ that for any finite set of words $W \subset \mathcal W$, we have $\GG(\Delta) \models \varphi_{\Delta, W}(X_1, \dots, X_n)$. Since $\Th_\exists(\GG(\Delta)) \subset \Th_\exists(\GG(\Gamma))$ and $\varphi_{\Delta, W}$ is an existential sentence, it follows that there exists $w\in \mathcal W$ for which $wit(w) \ne \emptyset$ and so the set of $i$-witnesses $\{w_1, \dots, w_s\}$ exists. Then, if we set $W$ to be the set of $i$-witnesses $\{w_1, \dots, w_s\}$, we have that $\GG(\Gamma) \models \varphi_{\Delta, W}(\overline g)$ and so $\Lambda_{\overline g} \in \bigcap\limits_{i=1, \dots, s} wit(w_i)$.
\end{proof}

\begin{cor}\label{cor:i-witness}
In the notation of {\rm Lemma \ref{lem:i-witness}}, for any coloured graph $\Lambda \in \bigcap\limits_{i=1, \dots, s} wit(w_i)$, we have that $\GG(\Lambda) \models \varphi_{\Delta, w}(a_{11} \cdots a_{1r_1}, \dots, a_{n1} \cdots a_{nr_n})$, for all $w\in \mathcal W$.
\end{cor}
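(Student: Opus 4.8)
The plan is to fix a coloured graph $\Lambda \in \bigcap_{i=1}^s wit(w_i)$ and to show that the tuple $\overline a = (a_{11}\cdots a_{1r_1}, \dots, a_{n1}\cdots a_{nr_n})$ attached to $\Lambda$ witnesses $\varphi_{\Delta,w}$ in $\GG(\Lambda)$ for \emph{every} $w \in \mathcal W$ at once. The point to keep in mind throughout is that this single tuple must work uniformly over all $w$, so it must be read off from $\Lambda$ alone and not from any auxiliary data that varies with $w$.

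The first step --- and the one place where the hypothesis of Lemma \ref{lem:i-witness} enters --- is to check that $wit(w) \neq \emptyset$ for every $w \in \mathcal W$. Since $w \in \mathcal W$ means $w \neq 1$ in $\GG(\Delta)$, the generators $X_1, \dots, X_n$ of $\GG(\Delta)$ witness the existential sentence $\varphi_{\Delta,w}$, so $\GG(\Delta) \models \varphi_{\Delta,w}$. Because $\varphi_{\Delta,w}$ is existential and $\Th_\exists(\GG(\Delta)) \subset \Th_\exists(\GG(\Gamma))$, this transfers to $\GG(\Gamma) \models \varphi_{\Delta,w}$; any witnessing tuple $\overline g$ then produces a coloured graph $\Lambda_{\overline g}$ with at most $nd$ vertices, so $\Lambda_{\overline g} \in wit(w)$ and $wit(w)$ is non-empty.

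With $wit(w) \neq \emptyset$ in hand, I would invoke property (3) in Definition \ref{def:witness}, which gives
$$
\bigcap_{i=1}^s wit(w_i) \;\cap\; wit(w) = \bigcap_{i=1}^s wit(w_i),
$$
equivalently $\bigcap_{i=1}^s wit(w_i) \subseteq wit(w)$; hence $\Lambda \in wit(w)$. Unwinding the definition of $wit(w)$ produces a tuple $\overline g \in \GG(\Gamma)^n$ with $\GG(\Gamma) \models \varphi_{\Delta,w}(\overline g)$ and $\Lambda = \Lambda_{\overline g}$ as coloured graphs. Applying Lemma \ref{lem:elemgraph} to this $\overline g$ then yields $\GG(\Lambda_{\overline g}) \models \varphi_{\Delta,w}(\overline a)$, and since $\Lambda = \Lambda_{\overline g}$ as coloured graphs the desired conclusion $\GG(\Lambda) \models \varphi_{\Delta,w}(\overline a)$ follows.

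The main thing to be careful about is uniformity: for different $w$ the definition of $wit(w)$ supplies a different witnessing tuple $\overline g$, so \emph{a priori} the tuple $\overline a$ might depend on $w$. This is exactly the subtlety resolved by the Remark following Lemma \ref{lem:elemgraph}, which records that the witnessing tuple $\overline a$ is determined by the coloured graph alone, not by the chosen $\overline g$. Thus the single tuple $\overline a$ read off from $\Lambda$ serves simultaneously as a witness for all $w \in \mathcal W$, and this bookkeeping --- rather than any computation --- is the only real obstacle.
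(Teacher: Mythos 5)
Your proof is correct and follows essentially the same route as the paper: Condition (3) of Definition \ref{def:witness} gives $\Lambda \in wit(w)$, and then Lemma \ref{lem:elemgraph} together with the remark that the witnessing tuple $\overline a$ depends only on the coloured graph yields the conclusion. The only difference is that you spell out two points the paper leaves implicit --- the verification that $wit(w) \neq \emptyset$ for every $w \in \mathcal W$ (needed because Condition (3) is only stated for such $w$, and this is where the hypothesis $\Th_\exists(\GG(\Delta)) \subset \Th_\exists(\GG(\Gamma))$ enters) and the uniformity of $\overline a$ across all $w$ --- which is careful bookkeeping rather than a departure in method.
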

\begin{proof}
The statement is an immediate consequence of the fact that the chain of witnesses is maximal, see Condition (3) in Definition \ref{def:witness}. Indeed, since by definition, for all $w\in \mathcal W$ we have that 
$$
\bigcap\limits_{i=1, \dots, s} wit(w_i) \cap wit(w) = \bigcap\limits_{i=1, \dots, s} wit(w_i)
$$ 
and since $\Lambda \in \bigcap\limits_{i=1, \dots, s} wit(w_i)$, we have that $\Lambda \in wit(w)$.
\end{proof}

\begin{thm}\label{thm:univeralimpliesembedd}
If $\Th_\exists(\GG(\Delta)) \subset \Th_\exists(\GG(\Gamma))$, then there exists a tame embedding $\varphi: \GG(\Delta) \to \GG(\Gamma_n)$.
\end{thm}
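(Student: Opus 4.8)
The plan is to extract from the inclusion of existential theories a single finite coloured graph $\Lambda$ sitting inside $\Gamma_n^e$, and to read off the desired tame embedding directly from its vertices. First I would invoke Lemma \ref{lem:i-witness}: since $\Th_\exists(\GG(\Delta)) \subset \Th_\exists(\GG(\Gamma))$, there is a set of $i$-witnesses $W=\{w_1, \dots, w_s\}$ drawn from the set $\mathcal W$ of words representing non-trivial elements of $\GG(\Delta)$, and the intersection $\bigcap_{i=1}^s wit(w_i)$ is non-empty. Fixing any coloured graph $\Lambda$ in this intersection, Definition \ref{def:graphasstog} supplies, for each colour $i$, the vertices $a_{i1}, \dots, a_{ir_i}$ of $\Gamma_n^e$ coloured $i$; I then set $\overline{a_i}=a_{i1}\cdots a_{ir_i}$, $\overline a=(\overline{a_1}, \dots, \overline{a_n})$, and take $\varphi\colon \GG(\Delta) \to \GG(\Gamma_n)$ to be the map $x_i \mapsto \overline{a_i}$. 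By the remark following Lemma \ref{lem:elemgraph} this tuple $\overline a$ depends only on $\Lambda$ as a coloured graph, so $\varphi$ is unambiguously attached to $\Lambda$; and $\GG(\Lambda) < \GG(\Gamma_n)$ because $\Lambda$ is an induced subgraph of $\Gamma_n^e$ (by \cite{KK}), so the image genuinely lands in $\GG(\Gamma_n)$.

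Next I would verify that $\varphi$ is a well-defined homomorphism and that it is injective, both via Corollary \ref{cor:i-witness}. Applying that corollary to any single word of $\mathcal W$ (say $w_1$) gives $\GG(\Lambda)\models \varphi_{\Delta, w_1}(\overline a)$, whose commutation part reads $[\overline{a_i}, \overline{a_j}]=1$ for every edge $(x_i,x_j)\in E(\Delta)$. Since these are exactly the defining relations of $\GG(\Delta)$, the assignment $x_i \mapsto \overline{a_i}$ extends to a homomorphism $\varphi$. For injectivity, let $g\ne 1$ in $\GG(\Delta)$ and represent it by a word $w\in F(x_1,\dots,x_n)$, so that $w\in\mathcal W$; Corollary \ref{cor:i-witness} again yields $\GG(\Lambda)\models \varphi_{\Delta, w}(\overline a)$, i.e.\ $\varphi(g)=w(\overline a)\ne 1$. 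As $g$ was an arbitrary non-trivial element, $\varphi$ is injective, hence an embedding.

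Finally, tameness is immediate from the construction in Definition \ref{def:graphasstog}: the vertices $a_{11}, \dots, a_{nr_n}$ are by design pairwise distinct vertices of $\Gamma_n^e$, that is, pairwise distinct conjugates of generators of $\GG(\Gamma_n)$. Hence no conjugate of a generator is repeated among the images $\varphi(x_i)=a_{i1}\cdots a_{ir_i}$, which is precisely the defining condition of a tame embedding (Definition \ref{def:tameemb}). This completes the construction of a tame embedding $\varphi\colon\GG(\Delta)\to\GG(\Gamma_n)$.

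I expect no serious obstacle at this final assembly stage, because the genuinely hard work has already been done in the preparatory lemmas: the transfer of existential sentences from $\GG(\Gamma)$ to $\GG(\Lambda_{\overline g})$ (Lemma \ref{lem:elemgraph}), the finiteness of each set $wit(w)$, and the Noetherian-type stabilisation encoded in the notion of $i$-witnesses. The one point that deserves care is logical rather than computational, namely that injectivity on the \emph{finite} witnessing set $\{w_1,\dots,w_s\}$ must propagate to injectivity on \emph{all} of $\mathcal W$; this is exactly what the maximality condition (3) of Definition \ref{def:witness} guarantees, repackaged as Corollary \ref{cor:i-witness}, so the argument closes cleanly.
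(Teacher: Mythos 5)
Your proposal is correct and follows essentially the same route as the paper's own proof: fix $\Lambda$ in the non-empty intersection supplied by Lemma \ref{lem:i-witness}, use Corollary \ref{cor:i-witness} to get $\GG(\Lambda)\models\varphi_{\Delta,w}(\overline a)$ for all $w\in\mathcal W$ (hence the map $x_i\mapsto a_{i1}\cdots a_{ir_i}$ is an injective homomorphism), deduce tameness from the pairwise distinctness of the $a_{ij}$ built into Definition \ref{def:graphasstog}, and land in $\GG(\Gamma_n)$ via $\Lambda<\Gamma_n^e$ and \cite{KK}. You merely spell out the homomorphism and injectivity verifications that the paper compresses into one sentence, so no further comment is needed.
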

\begin{proof}
If $\Th_\exists(\GG(\Delta)) \subset \Th_\exists(\GG(\Gamma))$, then, by Lemma \ref{lem:i-witness}, there exists $\Lambda \in wit(w)$ for all $w\in \mathcal W$, where $\mathcal W$ is the set of words that represent non-trivial elements in $\GG(\Delta)$. Furthermore, since by Corollary \ref{cor:i-witness}, $\GG(\Lambda) \models \varphi_{\Delta, w}(a_{11} \cdots a_{1r_1}, \dots, a_{n1} \cdots a_{nr_n})$, for all $w\in \mathcal W$, it follows that the map $X_i \to a_{i1} \cdots a_{ir_i}$ induces an embedding from $\GG(\Delta)$ to $\GG(\Lambda)$. By construction of $\Lambda$, we have that $a_{ij} \ne a_{kl}$ if $i\ne k$ or $j \ne l$ and so the embedding is tame. By definition, $\Lambda < \Gamma_n^e$ and so by \cite{KK} it follows that there is a (tame) embedding of $\GG(\Lambda)$ into $\GG(\Gamma_n)$.
\end{proof}

\begin{thm}\label{cor:univequivstronemb}
Let $\Delta$ and $\Gamma$ be two simplicial graphs, $|V(\Delta)|=n$ and $|V(\Gamma)|=m$. Then, $\GG(\Delta) \equiv_\forall \GG(\Gamma)$ if and only  if there exist tame embeddings $\varphi:\GG(\Delta) \to \GG(\Gamma_n)$ and $\psi: \GG(\Gamma) \to \GG(\Delta_m)$.
\end{thm}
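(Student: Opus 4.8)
The plan is to assemble the theorem from the two directional results already in hand, the only genuinely new ingredient being the elementary observation that universal equivalence coincides with existential equivalence.

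For the \emph{if} direction, suppose the tame embeddings $\varphi:\GG(\Delta) \to \GG(\Gamma_n)$ and $\psi:\GG(\Gamma) \to \GG(\Delta_m)$ exist. Since every tame embedding is in particular an embedding, Corollary \ref{cor:onlyif} applies verbatim and yields $\GG(\Delta) \equiv_\forall \GG(\Gamma)$. No further work is required for this implication.

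For the \emph{only if} direction, the first step is to convert universal equivalence into the hypothesis demanded by Theorem \ref{thm:univeralimpliesembedd}, namely a containment of existential theories. Here I would record the standard duality: for any two groups $G$ and $H$ one has $\Th_\forall(G) \subseteq \Th_\forall(H)$ if and only if $\Th_\exists(H) \subseteq \Th_\exists(G)$, because a sentence is universal exactly when its negation is logically equivalent to an existential one, and $H$ satisfies a universal sentence precisely when it fails its existential negation. Applying this in both directions shows that $\GG(\Delta) \equiv_\forall \GG(\Gamma)$ is equivalent to $\Th_\exists(\GG(\Delta)) = \Th_\exists(\GG(\Gamma))$; in particular both inclusions $\Th_\exists(\GG(\Delta)) \subseteq \Th_\exists(\GG(\Gamma))$ and $\Th_\exists(\GG(\Gamma)) \subseteq \Th_\exists(\GG(\Delta))$ hold. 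I would then invoke Theorem \ref{thm:univeralimpliesembedd} twice: the first inclusion produces a tame embedding $\varphi:\GG(\Delta) \to \GG(\Gamma_n)$ with $n=|V(\Delta)|$, and, swapping the roles of $\Delta$ and $\Gamma$, the second inclusion produces a tame embedding $\psi:\GG(\Gamma) \to \GG(\Delta_m)$ with $m=|V(\Gamma)|$, exactly as asserted.

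I do not expect any real obstacle in this argument, since all the substantive content has been shifted into Theorem \ref{thm:univeralimpliesembedd} (via the coloured-graph and $i$-witness machinery) and into Corollary \ref{cor:onlyif} (via Proposition \ref{prop:influniveq}). The single point that must be stated with care is the universal/existential duality, where the direction of the inclusion reverses; getting this orientation right is what guarantees that the two applications of Theorem \ref{thm:univeralimpliesembedd} line up with the two embeddings claimed in the statement.
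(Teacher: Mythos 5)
Your proposal is correct and follows exactly the paper's own route: the paper proves the theorem as an immediate consequence of Theorem \ref{thm:univeralimpliesembedd} (applied in both directions) and Corollary \ref{cor:onlyif}, which is precisely your decomposition. Your explicit spelling out of the universal/existential duality, with the inclusion reversal, is a standard fact the paper leaves implicit, and you have the orientation right.
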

\begin{proof}
The statement is an immediate consequence  of Theorem \ref{thm:univeralimpliesembedd} and Corollary \ref{cor:onlyif}.
\end{proof}

The above results can be interpreted algebraically as follows. We have proven that there exists a tame embedding from $\GG(\Delta)$ to $\GG(\Gamma)$ if and only if one of the tame maps which sends the tuple of generators of $\GG(\Delta)$ to the tuple $\overline a$ defined by one of the finitely many coloured subgraphs of $\Gamma^e$ is an embedding. More precisely, we have the following lemma.

\begin{lemma}\label{lem:fmtameemb}
Given two simplicial graphs  $\Delta$ and $\Gamma$, $|V(\Delta)|=n$, $|V(\Gamma)|=m$, there is an algorithm that outputs at most $nm$ tuples $\overline g_i$ of $n$ elements from $\GG(\Gamma)$, $i=1, \dots, nm$ such that the following are equivalent:
\begin{itemize}
\item there is a tame embedding from $\GG(\Delta)$ to $\GG(\Gamma)$;
\item {\rm(}at least{\rm)} one of the maps $\varphi_i$, where $\varphi_i$ sends the $n$-tuple of generators of $\GG(\Delta)$ to the tuple $\overline g_i$, defines an embedding from $\GG(\Delta)$ to $\GG(\Gamma)$, $i=1, \dots, nm$.
\end{itemize}
\end{lemma}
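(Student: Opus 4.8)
The plan is to use the coloured-graph machinery of Definition \ref{def:graphasstog} together with Lemma \ref{lem:elemgraph} to reduce the existence of a tame embedding to a finite search, and then to make that search effective via the decidability of the Extension Graph Embedding Problem (Corollary \ref{cor:decEGE}). The starting observation is that a tame embedding $f:\GG(\Delta)\to\GG(\Gamma)$ is, by Definition \ref{def:tameemb}, determined by the induced coloured subgraph $\Lambda$ of $\Gamma^e$ whose vertices are the pairwise distinct conjugates of generators occurring in the images $f(x_1),\dots,f(x_n)$, the colour of a vertex recording which image it belongs to. Because tameness forbids repetitions, no inflation is needed here and $\Lambda$ already sits inside $\Gamma^e$ rather than $\Gamma_n^e$; moreover each colour class spans a clique of $\Gamma^e$, so it has at most $d$ vertices, where $d$ is the clique number of $\Gamma$ (equal to that of $\Gamma^e$). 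Hence $\Lambda$ has at most $nd\le nm$ vertices, and there are only finitely many possibilities for it up to coloured isomorphism.

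First I would set up the list of candidate tuples: enumerate all coloured graphs on at most $nd$ vertices whose colour classes are cliques and for which the induced commutation quotient matches $E(\Delta)$; for each such abstract graph decide, by Corollary \ref{cor:decEGE}, whether it is realisable as an induced subgraph of $\Gamma^e$, and, using the explicit bound on conjugator lengths from Theorem \ref{thm:boundembd}, produce concrete conjugates $a_{ij}=g_{ij}^{-1}x_{ij}'g_{ij}\in\GG(\Gamma)$ realising it. To each realisable $\Lambda$ I then associate the tuple $\overline g_\Lambda=(a_{11}\cdots a_{1r_1},\dots,a_{n1}\cdots a_{nr_n})$ exactly as in Definition \ref{def:graphasstog}; these are the tuples output by the algorithm.

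The equivalence is then clean in both directions. If $\varphi_\Lambda:x_i\mapsto a_{i1}\cdots a_{ir_i}$ is an embedding for some $\Lambda$, it is \emph{tame} by construction, since its building blocks are pairwise distinct conjugates of generators of $\GG(\Gamma)$. Conversely, suppose a tame embedding $f$ exists and set $\overline g=(f(x_1),\dots,f(x_n))$. Since $f$ is injective, $\GG(\Gamma)\models\varphi_{\Delta,W}(\overline g)$ for every finite subset $W$ of the set $\mathcal W$ of non-trivial words of $\GG(\Delta)$; by Lemma \ref{lem:elemgraph} this transfers to $\GG(\Lambda_{\overline g})\models\varphi_{\Delta,W}(\overline a)$ for all such $W$, so the canonical map $x_i\mapsto\overline a_i$ is injective on all of $\GG(\Delta)$, i.e.\ it is an embedding into $\GG(\Lambda_{\overline g})$. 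As $f$ is tame we have $\Lambda_{\overline g}<\Gamma^e$, whence $\GG(\Lambda_{\overline g})<\GG(\Gamma)$ by \cite{KK}, so the tuple $\overline g_{\Lambda_{\overline g}}$ appears in our list and gives an embedding. It is precisely Lemma \ref{lem:elemgraph} that absorbs the powers $n_{ij}$ occurring in $f$, so that no separate argument about exponents is needed.

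The main obstacle is the counting: to guarantee that at most $nm$ tuples suffice rather than the a priori much larger number of coloured graphs on $\le nd$ vertices. Here I would exploit the clique bound $d\le m$ together with the $n$ colour classes, and organise the realisable coloured graphs so that only the essential ones are retained — concretely, by running the $i$-witness construction of Definition \ref{def:witness} to cut the family down along a strictly descending chain of witness intersections and keeping one representative tuple at each stage. Verifying that this bookkeeping never produces more than $nm$ genuinely distinct candidate tuples, while still catching the coloured graph of any tame embedding that happens to exist, is the delicate part; the remainder is a direct assembly of Lemma \ref{lem:elemgraph}, Corollary \ref{cor:decEGE} and Theorem \ref{thm:boundembd}.
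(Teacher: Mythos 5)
Your proposal is correct in substance and follows essentially the same route as the paper: both reduce a tame embedding $f$ to its associated coloured graph $\Lambda_{\overline g}$ from Definition \ref{def:graphasstog} (observing that tameness forbids repetitions of blocks, so $\Lambda'=\Lambda_{\overline g}<\Gamma^e$ and no inflation is needed), both transfer satisfaction of the sentences $\varphi_{\Delta,W}$ via Lemma \ref{lem:elemgraph}, and both conclude that the canonical tuple $\overline a$ built from $\Lambda_{\overline g}$ itself induces a tame embedding $\GG(\Delta)\to\GG(\Lambda_{\overline g})<\GG(\Gamma)$. Two points of divergence are worth recording. First, where the paper routes the converse through the $i$-witness set $W$ and Corollary \ref{cor:i-witness}, you apply Lemma \ref{lem:elemgraph} directly to every finite subset of $\mathcal W$; since $\Lambda_{\overline g}$ and $\overline a$ depend only on $\overline g$ and not on $W$, this is legitimate and in fact slightly cleaner --- the witness machinery is genuinely needed only for Lemma \ref{cor:embfinitecond}, where injectivity is assumed on a finite set alone. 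Second, regarding your closing worry about the count: the paper's own proof never justifies the figure $nm$ either --- it simply takes the tuples attached to the finitely many $n$-coloured graphs on at most $nd$ vertices realisable in $\Gamma^e$, a number computable from $n$ and $m$ but a priori much larger than $nm$; and your proposed fix, trimming along the strictly descending chain of witness intersections, would bound the list only by the length of that chain, which again is not $nm$. So you should not regard the exact bound as something your argument must recover: everything downstream (Corollaries \ref{cor:chartameemb} and \ref{cor:tembuni}) uses only finiteness and effectiveness of the list of tuples, and your explicit enumeration via Corollary \ref{cor:decEGE} together with the conjugator-length bound of Theorem \ref{thm:boundembd} actually supplies the algorithmic content more concretely than the paper's proof, which leaves it implicit.
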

\begin{proof}
Let $V(\Delta)= \{x_1, \dots, x_n\}$. Assume that there is a tame embedding $f$ from $\GG(\Delta)$ to $\GG(\Gamma)$. Since $f$ is a tame embedding, there are no repetitions of blocks in the block decompositions of $f(x_i)$, $i=1, \dots, n$, that is if a block appears in the decomposition of  $f(x_i)$ for some $i \in \{1, \dots, n\}$, then it does not appear in the block decomposition of $f(x_j)$ $i \ne j$. Hence, in the notation of Definition \ref{def:graphasstog}, we have that the graphs $\Lambda'$ and $\Lambda_{\overline g}$, where $\overline g =(f(x_1), \dots, f(x_n))$ are the same (as coloured graphs) and that $\Lambda_{\overline g} < \Gamma^e$ (and inflating $\Gamma$ is not needed). 

Let $W$ be the set of $i$-witnesses for the set $\mathcal W$ of words that define non-trivial elements in $\GG(\Delta)$. Since $f$ is an embedding, $\GG(\Gamma) \models \varphi_{\Delta, W}(\overline g)$. Then by Lemma \ref{lem:elemgraph}, we have that $\GG(\Lambda_{\overline g}) \models \varphi_{\Delta, W}(\overline a)$ and so $\Lambda_{\overline g} \in \bigcap \limits_{w\in W} wit(w)$. By Corollary \ref{cor:i-witness}, $\GG(\Lambda) \models \varphi_{\Delta, w}(\overline a)$, for all $w\in \mathcal W$. It follows that the map $X_i \to a_{i1} \cdots a_{ir_i}$ induces a tame embedding from $\GG(\Delta)$ to $\GG(\Lambda_{\overline g}) < \GG(\Gamma)$. 
\end{proof}

Furthermore, to decide if a tame homomorphism between pc groups is an embedding, it suffices to determine if it is injective in the set of $i$-witnesses. We state this result in the following lemma.

\begin{lemma}\label{cor:embfinitecond}
Let $\Delta$ and $\Gamma$ be simplicial graphs, $|V(\Delta)|=n$. Let $\mathcal W < F(x_1, \dots, x_n)$ be the set of words such that $w\ne 1$ in $\GG(\Delta)$ and let $W=\{w_1, \dots, w_s\}$ be the set of $i$-witnesses. Let $f:\GG(\Delta) \to \GG(\Gamma)$ be a tame map. Then if $f$ is injective on the set of $i$-witnesses $W$, then $f$ is an embedding.
\end{lemma}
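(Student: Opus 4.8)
The plan is to run the tuple $\overline g=(f(x_1),\dots,f(x_n))$ through the machinery of Lemma \ref{lem:elemgraph} and Corollary \ref{cor:i-witness}, and then to correct the resulting combinatorial embedding back to $f$ itself using tameness. First I would record that, since $f$ is a homomorphism out of $\GG(\Delta)$, the commutation relations are preserved: $[f(x_i),f(x_j)]=1$ whenever $(x_i,x_j)\in E(\Delta)$. The hypothesis that $f$ is injective on the set of $i$-witnesses means precisely that $f(w_t)\ne 1$ for each $w_t\in W$ (recall $w_t\in\mathcal W$, so $w_t\ne 1$ in $\GG(\Delta)$). Consequently $\GG(\Gamma)\models\varphi_{\Delta,W}(\overline g)$, and by Lemma \ref{lem:elemgraph} the associated coloured graph satisfies $\GG(\Lambda_{\overline g})\models\varphi_{\Delta,W}(\overline a)$, so that $\Lambda_{\overline g}\in\bigcap_{t=1}^s wit(w_t)$.

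Next I would invoke the maximality of the chain of $i$-witnesses. By Corollary \ref{cor:i-witness}, since $\Lambda_{\overline g}\in\bigcap_{t} wit(w_t)$, we obtain $\GG(\Lambda_{\overline g})\models\varphi_{\Delta,w}(\overline a)$ for \emph{every} $w\in\mathcal W$. This says exactly that the assignment $x_i\mapsto\overline a_i=a_{i1}\cdots a_{ir_i}$ respects all defining relations of $\GG(\Delta)$ and kills no non-trivial element; hence $\iota\colon\GG(\Delta)\to\GG(\Lambda_{\overline g})$, $x_i\mapsto\overline a_i$, is an embedding.

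The heart of the argument, and the step I expect to be the main obstacle, is transferring this embedding $\iota$, which a priori only records the commutation graph of the roots, back to the genuine map $f$, whose images involve the specific block powers. Here tameness is essential: because $f$ is tame, no two blocks occurring in the decompositions $f(x_i)=b_{i1}\cdots b_{ir_i}$ share a root, so no inflation is needed in Definition \ref{def:graphasstog} and $\Lambda_{\overline g}$ is already an induced subgraph of $\Gamma^e$ whose vertices may be identified with the pairwise distinct roots $c_{ij}:=\sqrt{b_{ij}}\in V(\Gamma^e)$, where $b_{ij}=c_{ij}^{m_{ij}}$ for some $m_{ij}\ne 0$. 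I would then factor $f$ as a composition of three embeddings. Let $\rho\colon\GG(\Lambda_{\overline g})\to\GG(\Lambda_{\overline g})$ be the power endomorphism $c_{ij}\mapsto c_{ij}^{m_{ij}}$, which is injective since multiplying the exponents of the canonical generators of a pc group by non-zero integers creates no new cancellation in geodesic normal form (equivalently, by residual torsion-free nilpotence, \cite{DK}); and let $\kappa\colon\GG(\Lambda_{\overline g})\to\GG(\Gamma)$ be the Kim--Koberda embedding associated to the induced subgraph $\Lambda_{\overline g}<\Gamma^e$, $c_{ij}\mapsto c_{ij}$ (\cite{KK}). Both are injective, and on generators $\kappa\rho\iota(x_i)=\kappa\rho(\prod_j c_{ij})=\kappa(\prod_j c_{ij}^{m_{ij}})=\prod_j b_{ij}=f(x_i)$, so $\kappa\rho\iota=f$. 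Being a composition of embeddings, $f$ is an embedding.

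I would flag two points requiring care. First, one must check that the coloured graph $\Lambda_{\overline g}$ of Definition \ref{def:graphasstog} coincides, as a commutation graph, with the graph spanned by the roots $c_{ij}$; this holds because $[u^a,v^b]=1$ is equivalent to $[u,v]=1$ for root elements and non-zero exponents, so passing between a block, its powers, and its root does not alter adjacency, and the identification of the canonical generators of $\GG(\Lambda_{\overline g})$ with the $c_{ij}$ is legitimate. Second, the injectivity of the power endomorphism $\rho$ should be isolated as a short lemma or cited explicitly; it is essentially the only ingredient beyond the earlier results, and it is what allows the combinatorial witness $\overline a$, built from roots, to be promoted to the arithmetically faithful image $f(x_i)=\prod_j b_{ij}$.
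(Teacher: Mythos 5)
Your first two steps coincide with the paper's proof: injectivity of $f$ on $W$ gives $\GG(\Gamma)\models\varphi_{\Delta,W}(\overline g)$, Lemma \ref{lem:elemgraph} yields $\GG(\Lambda_{\overline g})\models\varphi_{\Delta,W}(\overline a)$, so $\Lambda_{\overline g}\in\bigcap_{t} wit(w_t)$, and Corollary \ref{cor:i-witness} upgrades this to all $w\in\mathcal W$, producing the embedding $\iota$; likewise your observation that tameness forbids repeated blocks, so that no inflation is needed and $\Lambda_{\overline g}<\Gamma^e$, is exactly the paper's. The divergence is in the transfer step, and there your argument has a genuine gap: the map $\kappa\colon\GG(\Lambda_{\overline g})\to\GG(\Gamma)$, $c_{ij}\mapsto c_{ij}$, is not in general injective, and it is not what \cite{KK} provides. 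The Kim--Koberda theorem embeds $\GG(\Lambda)$ for a finite induced $\Lambda<\Gamma^e$ by sending each vertex $c$ to a \emph{sufficiently large} power $c^N$; with exponent one it can fail. Concretely, let $\Gamma$ consist of two vertices $a,b$ and no edge, so $\GG(\Gamma)=F(a,b)$; the vertices $a$, $a^b=b^{-1}ab$ and $b$ of $\Gamma^e$ are pairwise distinct and pairwise non-commuting, so they span an induced subgraph $\Lambda$ with no edges and $\GG(\Lambda)\cong F_3$ with canonical generators $c_1,c_2,c_3$, yet $\kappa(c_3^{-1}c_1c_3c_2^{-1})=(b^{-1}ab)(a^b)^{-1}=1$ while $c_3^{-1}c_1c_3c_2^{-1}\ne 1$. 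So $\kappa$ has non-trivial kernel, and a composition $\kappa\rho\iota$ in which the middle factor $\rho$ is injective (your normal-form argument for $\rho$ is fine) but the outer factor is not establishes nothing.

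Nor can the defect be absorbed into $\rho$: the exponents $m_{ij}$ are dictated by the given map $f$ and may all equal $1$ --- the tame map $F_3\to F(a,b)$ with $x_1\mapsto a$, $x_2\mapsto a^b$, $x_3\mapsto b$ realises precisely the configuration above with $\rho$ the identity --- so no largeness is available. What the transfer step actually requires is injectivity of the single map $c_{ij}\mapsto b_{ij}$ on (at least the image of $\iota$ in) $\GG(\Lambda_{\overline g})$, i.e.\ that the blocks of a tame map, with their given exponents, generate a pc group with graph $\Lambda_{\overline g}$. This is exactly the point the paper's proof disposes of in one line, asserting that for a tame map ``one does not need to pass to large enough powers of blocks to generate a pc group'', so that $a_{i1}\cdots a_{ir_i}=g_i$. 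You correctly flagged this as the main obstacle, but your proposed resolution --- attributing injectivity of the exponent-one map to \cite{KK} --- misquotes that theorem and leaves the obstacle standing; moreover, the example shows that the needed injectivity is not a consequence of tameness alone, so any repair must use more than the citation (e.g.\ the witness structure or the specific exponents), rather than the factorisation through $\kappa$ as written.
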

\begin{proof}
Denote by $\overline g$ the tuple $(f(x_1), \dots, f(x_n))$. Since $f$ is a homomorphism and is injective on $W$, it follows that $\GG(\Gamma) \models \varphi_{\Delta, W}(\overline g)$. Then by Lemma \ref{lem:elemgraph}, we have that 
$$
\GG(\Lambda_{\overline g}) \models \varphi_{\Delta, W}(\overline a),
$$
and so $\Lambda_{\overline g} \in \bigcap\limits_{i=1, \dots,s} wit(w_i)$. By Corollary \ref{cor:i-witness}, we have that 
$$
\GG(\Lambda_{\overline g}) \models \varphi_{\Delta, w}(\overline a),
$$ 
for all $w\in \mathcal W$. It follows that $f': \GG(\Delta) \to \GG(\Lambda_{\overline g})$ induced by the map $f'(x_i)= a_{i1} \cdots a_{ir_i}$ is an embedding.

Since $f$ is a tame homomorphism, it follows that $\Lambda_{\overline g} < \Gamma^e$ (one does not need to inflate the graph $\Gamma$) and furthermore $a_{i1} \cdots a_{ir_i}=g_i$ (one does not need to pass to large enough powers of blocks to generate a pc group). Therefore, the tame homomorphism $f$ is an embedding.
\end{proof}

Summarising, we have shown that the existence of a tame embedding can be determined by checking if one of the finitely many maps (which can be described effectively) satisfies finitely many conditions, namely that the map sends commutators of generators that commute in $\GG(\Delta)$ to the identity (to assure that the map is a homomorphism) and that the finite set of $i$-witnesses is mapped non-trivially. More precisely, combining Lemmas \ref{lem:fmtameemb} and \ref{cor:embfinitecond}, we obtain

\begin{cor}\label{cor:chartameemb}
Let $\Delta$ and $\Gamma$ be simplicial graphs, $|V(\Delta)|=n$, $|V(\Gamma)|=m$. There exist $w_1, \dots, w_k \in \GG(\Delta)$ and $\overline{g_1}, \dots, \overline{g_{nm}} \in \GG(\Gamma)^n$ such that the following are equivalent:

\begin{itemize}
\item there exists a tame embedding from $\GG(\Delta)$ to $\GG(\Gamma)$;
\item at least one of the maps $\varphi_i$ that sends the tuple of generators $V(\Delta)$ to the tuple $\overline{g_i}$ is injective in the set of words $w_1, \dots, w_k$, $i=1, \dots, nm$, that is there exists $i\in \{1, \dots, nm\}$ such that $w_j=1$ in $\GG(\Delta)$ if and only if $\varphi_i(w_j)=1$ in $\GG(\Gamma)$ for all $j=1, \dots, k$.
\end{itemize}
\end{cor}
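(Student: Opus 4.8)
The plan is to obtain the statement by directly combining Lemmas \ref{lem:fmtameemb} and \ref{cor:embfinitecond}, which between them already isolate the two ingredients needed: a finite list of candidate tuples and a finite set of words on which injectivity suffices. First I would invoke Lemma \ref{lem:fmtameemb} to produce the tuples $\overline{g_1}, \dots, \overline{g_{nm}} \in \GG(\Gamma)^n$ together with the associated maps $\varphi_i$ sending the tuple of generators of $\GG(\Delta)$ to $\overline{g_i}$, and to record that there is a tame embedding from $\GG(\Delta)$ to $\GG(\Gamma)$ if and only if at least one $\varphi_i$ is itself an embedding. Each $\varphi_i$ is tame by construction, since its defining tuple arises from the pairwise distinct vertices $a_{i1}, \dots, a_{ir_i}$ of a coloured subgraph of $\Gamma^e$, as in Definition \ref{def:graphasstog}.

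Next I would take the word list $w_1, \dots, w_k$ to be the union of two finite families: the defining relators $[x_a, x_b]$ with $(x_a, x_b) \in E(\Delta)$ of $\GG(\Delta)$, and the set of $i$-witnesses $w_1', \dots, w_s'$ supplied by Lemma \ref{cor:embfinitecond}. The first family consists of words trivial in $\GG(\Delta)$ and the second of words non-trivial in $\GG(\Delta)$; this is exactly what allows the single condition ``$w_j = 1$ in $\GG(\Delta)$ if and only if $\varphi_i(w_j) = 1$ in $\GG(\Gamma)$'' to encode simultaneously that $\varphi_i$ respects the commutation relations (hence is a homomorphism) and that $\varphi_i$ collapses none of the witnesses.

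With these choices the equivalence unwinds in both directions. If a tame embedding exists, then by Lemma \ref{lem:fmtameemb} some $\varphi_i$ is an embedding; being an injective homomorphism it sends each relator to $1$ and each non-trivial witness to a non-trivial element, so $\varphi_i$ satisfies the stated condition on $w_1, \dots, w_k$. Conversely, if some $\varphi_i$ satisfies that condition, then vanishing on the relators shows that $\varphi_i$ respects the commutation relations, so $\varphi_i$ is a genuine (tame) homomorphism, while non-vanishing on the witnesses says precisely that $\varphi_i$ is injective on the set of $i$-witnesses; Lemma \ref{cor:embfinitecond} then upgrades this to the conclusion that $\varphi_i$ is an embedding, whence a tame embedding exists again by Lemma \ref{lem:fmtameemb}.

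The step requiring the most care is the passage from ``injective on the $i$-witnesses'' to ``embedding'', but this is exactly the content of Lemma \ref{cor:embfinitecond} and can be quoted rather than reproved. The only genuine bookkeeping is to check that each $\varphi_i$ is a bona fide tame homomorphism before Lemma \ref{cor:embfinitecond} applies, which is ensured by including the relators among the $w_j$ and by the tameness built into the construction of the tuples $\overline{g_i}$.
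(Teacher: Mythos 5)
Your proposal is correct and takes essentially the same route as the paper, which obtains Corollary \ref{cor:chartameemb} precisely by combining Lemmas \ref{lem:fmtameemb} and \ref{cor:embfinitecond}, with the word list consisting of the commutators of commuting generators of $\GG(\Delta)$ (forcing each candidate map $\varphi_i$ to be a homomorphism) together with the finite set of $i$-witnesses. Your explicit check that each $\varphi_i$ is tame by construction, so that Lemma \ref{cor:embfinitecond} applies, is exactly the bookkeeping the paper's summary leaves implicit.
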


Therefore, in order to prove that the Tame Embedding Problem is decidable, it suffices to show that the maximal length of words in a (minimal) set of $i$-witnesses can be  bounded effectively by a number which depends on $\Delta$ and $\Gamma$, i.e. to construct an algorithm that given $\Delta$ and $\Gamma$ outputs a natural umber $N$ which bounds the length of the $i$-witnesses.

Although we do not present the details here, it is not difficult to see that the Embedding Problem between pc groups reduces to the Embedding Problem of the corresponding pc Lie algebras. In terms of Lie algebras, one can deduce from our results that there exists a number $N$ such that if a tame map $f$ between pc Lie algebras is injective on commutators of weight less than or equal to $N$, then it is an embedding. In this context we formulate the following 

\begin{question}
Does there exist an algorithm that given $\Delta$ and $\Gamma$ outputs a natural number $N$ so that if a tame map from the pc Lie algebra $\mathcal L (\Delta)$ to the pc Lie algebra $\mathcal L(\Gamma)$ is injective on commutators of weight less than or equal to $N$, then it is an embedding?
\end{question}

As we mentioned, a positive answer to the above question implies the decidability of the Tame Embedding Problem. We believe that this should be the case and so we formulate the following

\begin{conj}\label{conj:dectameemb}
The Tame Embedding Problem is decidable for pc groups, that is there is an algorithm that given two simplicial graphs $\Delta$ and $\Gamma$, decides whether or not there exists a tame embedding from $\GG(\Delta)$ to $\GG(\Gamma)$. 
\end{conj}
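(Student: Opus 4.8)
The plan is to isolate the one ingredient missing from a full decidability argument --- an \emph{effective} bound on the length of a set of $i$-witnesses --- and to reduce everything else to procedures already shown to be effective. By Corollary~\ref{cor:chartameemb} there is a tame embedding $\GG(\Delta)\to\GG(\Gamma)$ if and only if one of the $nm$ tuples $\overline{g_i}\in\GG(\Gamma)^n$ produced by the algorithm of Lemma~\ref{lem:fmtameemb} yields a map $\varphi_i$ with $w_j=1$ in $\GG(\Delta)$ exactly when $\varphi_i(w_j)=1$ in $\GG(\Gamma)$, for the words $w_1,\dots,w_k$. Since the word problem in pc groups is decidable and the tuples $\overline{g_i}$ are computable, this last condition is decidable \emph{as soon as the finite list} $w_1,\dots,w_k$ \emph{is in hand}; so the whole problem collapses to exhibiting a set of $i$-witnesses algorithmically.

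First I would construct such a set greedily and bound its cardinality a priori. Enumerate the non-trivial words $w$ of $\GG(\Delta)$ by increasing length, compute the finite set $wit(w)$ of $n$-coloured subgraphs of $\Gamma_n^e$, and maintain the descending intersection of the $wit$'s, appending a word to the list exactly when it strictly shrinks the current intersection. Every $wit(w)$ lies inside the single finite set of $n$-coloured induced subgraphs of $\Gamma_n^e$ with at most $nd$ vertices ($d$ the maximal clique size of $\Gamma$, as in Definition~\ref{def:graphasstog}), so a strictly descending chain of such intersections has length at most the computable cardinality of that set; hence $k$ is bounded and conditions~(1)--(2) of Definition~\ref{def:witness} are met after finitely many appends.

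The hard part will be certifying condition~(3), namely that the stabilised intersection is minimal over \emph{all} of $\mathcal W$ rather than over the finitely many words already examined; this is precisely the sought effective length bound $N=N(\Delta,\Gamma)$, for if every word capable of shrinking the intersection further had length at most $N$ then the enumeration above could be truncated at length $N$ and its output certified complete, yielding decidability via the first paragraph. To produce such an $N$ I would pass to the associated graded pc Lie algebras $\mathcal L(\Delta)$ and $\mathcal L(\Gamma)$, where a tame linear map is an embedding if and only if it is injective on the finite-dimensional spaces of Lie commutators up to a certain weight, and attempt to bound that weight in terms of $n$, $m$ and $d$. The crux is a cancellation-control estimate: a tame map distributes the image of each generator over boundedly many conjugated blocks, and one must show that if the map fails to be injective then the failure is already witnessed by a commutator of weight bounded by an explicit function of $n,m,d$. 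Controlling the combinatorics of cancellation in products of conjugated blocks, so that non-injectivity necessarily surfaces at bounded weight, is the genuinely difficult input and is exactly the question posed immediately before the conjecture; I expect the bulk of the work to lie there.
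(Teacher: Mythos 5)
You have not proved the statement, and indeed you could not have from the material at hand: in the paper this is Conjecture \ref{conj:dectameemb}, an open problem, and the paper offers no proof of it. What your proposal does is rederive, accurately, the paper's own reduction: Corollary \ref{cor:chartameemb} (via Lemmas \ref{lem:fmtameemb} and \ref{cor:embfinitecond}) shows that tame embeddability is equivalent to injectivity of one of finitely many computable maps on a finite set of $i$-witnesses, so decidability would follow from an effective bound $N(\Delta,\Gamma)$ on the length of those witnesses — equivalently, from the weight bound for pc Lie algebras. But that bound is precisely the Question the paper poses immediately before the conjecture, and you explicitly defer it (``I expect the bulk of the work to lie there''). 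A cancellation-control estimate showing that non-injectivity of a tame map must surface at bounded commutator weight is the \emph{entire} content of the conjecture; observing that it would suffice is the paper's framing, not a proof. As written, your argument establishes nothing beyond what Corollary \ref{cor:tembuni} and the surrounding discussion already state.

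There is also an unjustified step earlier in your plan, independent of the main gap: you assume you can ``compute the finite set $wit(w)$'' for each enumerated word $w$. The paper proves only that $wit(w)$ is \emph{finite} (its members are $n$-coloured graphs on at most $nd$ vertices), not that membership in it is decidable. Deciding whether a given coloured graph $\Lambda$ belongs to $wit(w)$ requires deciding whether some tuple $\overline g \in \GG(\Gamma)^n$ satisfies $\varphi_{\Delta,w}(\overline g)$ \emph{and} realises $\Lambda = \Lambda_{\overline g}$ as coloured graphs; the latter condition is tied to the block decompositions of the $g_i$ and the replacement procedure of Definition \ref{def:graphasstog}, and no algorithm for it is given. (The algorithmic content the paper does establish — Corollary \ref{cor:decEGE} and the tuples of Lemma \ref{lem:fmtameemb} — comes from enumerating candidate coloured subgraphs of $\Gamma_n^e$, which sidesteps computing $wit(w)$ itself.) So even the greedy stage of your construction, which you treat as routine, rests on an unproved decidability claim; and even granting it, conditions (1)--(2) of Definition \ref{def:witness} give you only a candidate chain, while certifying condition (3) against the infinite set $\mathcal W$ is exactly the missing effective bound. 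The conjecture remains open after your argument exactly as before it.
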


If the above conjecture holds, then it follows from Theorem \ref{cor:univequivstronemb} that there is an algorithm to decidable whether or not two pc groups are universally equivalent
\begin{cor} \label{cor:tembuni}
If the Tame Embedding Problem is decidable, then there is an algorithm to decide whether or not two given pc groups are universally equivalent.
\end{cor}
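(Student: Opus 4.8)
The plan is to reduce the decision of universal equivalence directly to two instances of the Tame Embedding Problem, using the characterisation provided by Theorem \ref{cor:univequivstronemb}. Given simplicial graphs $\Delta$ and $\Gamma$, set $n=|V(\Delta)|$ and $m=|V(\Gamma)|$. By Theorem \ref{cor:univequivstronemb}, the groups $\GG(\Delta)$ and $\GG(\Gamma)$ are universally equivalent if and only if there exist tame embeddings $\varphi:\GG(\Delta) \to \GG(\Gamma_n)$ and $\psi:\GG(\Gamma) \to \GG(\Delta_m)$, where $\GG(\Gamma_n)$ and $\GG(\Delta_m)$ denote the $n$- and $m$-inflations respectively. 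Thus it suffices to decide, separately, the existence of each of these two tame embeddings.

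The first step is to observe that the inflation graphs are produced algorithmically. Indeed, Definition \ref{def:infl} describes $\infl(\Gamma)$ explicitly in terms of the combinatorics of $\Gamma$: one clones each vertex into a prescribed number of copies, joins the copies of a single vertex into a clique, and joins two copies precisely when the originating vertices are adjacent in $\Gamma$. Hence, on input $(\Delta,\Gamma)$ one first reads off $n=|V(\Delta)|$ and $m=|V(\Gamma)|$, and then effectively writes down the graphs $\Gamma_n$ (the $n$-inflation of $\Gamma$) and $\Delta_m$ (the $m$-inflation of $\Delta$), each a finite simplicial graph.

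The second step invokes the hypothesis. Assuming the Tame Embedding Problem is decidable, there is an algorithm that, given any two simplicial graphs, decides whether there exists a tame embedding between the associated pc groups. Running this algorithm on the pair $(\Delta,\Gamma_n)$ decides the existence of the tame embedding $\varphi$, and running it on the pair $(\Gamma,\Delta_m)$ decides the existence of $\psi$. The overall algorithm then declares $\GG(\Delta)$ and $\GG(\Gamma)$ to be universally equivalent precisely when both of these runs return a positive answer; correctness is immediate from the equivalence in Theorem \ref{cor:univequivstronemb}. There is no genuine obstacle beyond verifying that the inflation construction is effective, which is routine from Definition \ref{def:infl}: the entire difficulty of the problem has been absorbed into the standing assumption that the Tame Embedding Problem is decidable.
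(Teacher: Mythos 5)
Your proposal is correct and matches the paper's own (implicit) argument: the paper derives Corollary \ref{cor:tembuni} directly from Theorem \ref{cor:univequivstronemb} in exactly this way, reducing universal equivalence to the two tame-embedding instances $(\Delta,\Gamma_n)$ and $(\Gamma,\Delta_m)$, with the inflation graphs effectively constructible from Definition \ref{def:infl}. Your added remark that the only point needing verification is the effectiveness of the inflation construction is accurate and is the reason the paper states this as an immediate corollary.
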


Although the general problem is open, there are classes of pc groups for which the Tame Embedding Problem is equivalent to the Extension Graph Embedding Problem and so both of these problems are decidable. We state this observation in the following corollaries.

\begin{cor}\label{cor:decunivth2dpc}
There is an algorithm that given an arbitrary simplicial graph $\Delta$ and a simplicial graph $\Gamma$ whose deflation graph is triangle-free decides whether or not $\GG(\Delta) \equiv_\forall \GG(\Gamma)$. In other words, if the  deflation graph of $\Gamma$ is triangle-free, then there is an algorithm to decide whether or not an arbitrary pc group is universally equivalent to $\GG(\Gamma)$.
\end{cor}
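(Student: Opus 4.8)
The plan is to combine the reduction of universal equivalence to a pair of tame embeddings provided by Theorem \ref{cor:univequivstronemb} with the decidability of the Extension Graph Embedding Problem (Corollary \ref{cor:decEGE}), using Theorem \ref{thm:defltfree} as the bridge between tame embeddings and extension graph embeddings. The key preliminary observation I would record is that triangle-freeness of the deflation graph is a universal invariant: by Lemma \ref{lem:universalprop}, $\defl(\Gamma)$ is triangle-free precisely when $\GG(\Gamma)$ satisfies the universal sentence (\ref{eq:property}), and two universally equivalent groups satisfy the same universal sentences. Hence, if $\GG(\Delta) \equiv_\forall \GG(\Gamma)$ and $\defl(\Gamma)$ is triangle-free, then $\defl(\Delta)$ must also be triangle-free.

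First I would check algorithmically whether $\defl(\Delta)$ is triangle-free; this is a finite computation on $\Delta$. If it is not, then $\GG(\Delta)$ fails the universal sentence (\ref{eq:property}) that $\GG(\Gamma)$ satisfies, so the two groups cannot be universally equivalent and the algorithm halts with output ``no''. It is exactly this case that allows one to sidestep the general (open) Tame Embedding Problem.

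In the remaining case both $\defl(\Delta)$ and $\defl(\Gamma)$ are triangle-free. Since inflation and deflation are inverse operations, $\defl(\Gamma_n)=\defl(\Gamma)$ and $\defl(\Delta_m)=\defl(\Delta)$, so the deflation graphs of the two target groups $\GG(\Gamma_n)$ and $\GG(\Delta_m)$ appearing in Theorem \ref{cor:univequivstronemb} are both triangle-free. Theorem \ref{thm:defltfree} then applies to each: a tame embedding $\GG(\Delta)\to\GG(\Gamma_n)$ exists if and only if $\Delta$ is an induced subgraph of $\Gamma_n^e$, and a tame embedding $\GG(\Gamma)\to\GG(\Delta_m)$ exists if and only if $\Gamma$ is an induced subgraph of $\Delta_m^e$. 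After constructing the finite inflation graphs $\Gamma_n$ and $\Delta_m$ (which is explicit by Definition \ref{def:infl}), both of these induced-subgraph questions are decidable by Corollary \ref{cor:decEGE}. By Theorem \ref{cor:univequivstronemb}, $\GG(\Delta)\equiv_\forall\GG(\Gamma)$ holds if and only if both tame embeddings exist, so returning the conjunction of the two extension-graph decision procedures settles universal equivalence.

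The argument carries no analytic difficulty; its content lies entirely in ordering the previously established results correctly. The step requiring genuine care---and the real heart of the matter---is the reduction in the first two paragraphs: without first eliminating the case where $\defl(\Delta)$ is not triangle-free, the second embedding $\GG(\Gamma)\to\GG(\Delta_m)$ would have a target whose deflation need not be triangle-free, Theorem \ref{thm:defltfree} would fail to apply, and decidability would hinge on the open Tame Embedding Problem. The universal-sentence characterisation of Lemma \ref{lem:universalprop} is precisely what renders that obstruction vacuous.
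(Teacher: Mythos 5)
Your proposal is correct and takes essentially the same approach as the paper: it combines Theorem \ref{cor:univequivstronemb}, Theorem \ref{thm:defltfree} and Corollary \ref{cor:decEGE}, with Lemma \ref{lem:universalprop} supplying the triangle-freeness of $\defl(\Delta)$. The only cosmetic difference is that you test $\defl(\Delta)$ up front and reject early, whereas the paper derives its triangle-freeness from the embedding $\GG(\Delta) < \GG(\Gamma_n)$ inside the chain of equivalences, arriving at the unconditional criterion that $\GG(\Delta) \equiv_\forall \GG(\Gamma)$ if and only if $\Delta < \Gamma_n^e$ and $\Gamma < \Delta_m^e$.
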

\begin{proof}
By Theorem \ref{cor:univequivstronemb}, $\GG(\Delta) \equiv_\forall \GG(\Gamma)$ if and only if there are tame embeddings $\GG(\Delta) < \GG(\Gamma_n)$ and $\GG(\Gamma) < \GG(\Delta_m)$. Since $\Gamma = \defl(\Gamma_n)$ is triangle-free, it follows from Theorem \ref{thm:defltfree} that there is a tame embedding of $\GG(\Delta)$ into $\GG(\Gamma_n)$ if and only if $\Delta < \Gamma_n^e$. Furthermore, since $\GG(\Delta) < \GG(\Gamma_n)$, then by Lemma \ref{lem:universalprop}, the deflation graph of $\Delta$ is triangle-free. Again from Theorem \ref{thm:defltfree}, it follows that $\GG(\Gamma) < \GG(\Delta_m)$ if and only if $\Gamma < \Delta_m^e$. Therefore, combining the above equivalences, we see that $\GG(\Delta) \equiv_\forall \GG(\Gamma)$ if and only if $\Delta < \Gamma_n^e$ and $\Gamma < \Delta_m^e$, which is decidable by Corollary \ref{cor:decEGE}.
\end{proof}

\begin{cor}
There is an algorithm that given and arbitrary simplicial graph $\Delta$ and a triangle-built graph $\Gamma$ decides whether or not $\GG(\Delta) \equiv_\forall \GG(\Gamma)$. In other words, if $\Gamma$ is triangle-built, then there is an algorithm to decide whether or not an arbitrary pc group is universally equivalent to $\GG(\Gamma)$.
\end{cor}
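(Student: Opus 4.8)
The plan is to follow the proof of Corollary \ref{cor:decunivth2dpc} almost verbatim, substituting Theorem \ref{thm:CDKtbuilt} for Theorem \ref{thm:defltfree}. By Theorem \ref{cor:univequivstronemb}, writing $n=|V(\Delta)|$ and $m=|V(\Gamma)|$, the groups $\GG(\Delta)$ and $\GG(\Gamma)$ are universally equivalent if and only if there exist tame embeddings $\GG(\Delta)\to\GG(\Gamma_n)$ and $\GG(\Gamma)\to\GG(\Delta_m)$. The proposed algorithm runs the decision procedure of Corollary \ref{cor:decEGE} on the two instances ``$\Delta<\Gamma_n^e$'' and ``$\Gamma<\Delta_m^e$'' (noting that $\Gamma_n$ and $\Delta_m$ are computable from $\Gamma$ and $\Delta$) and answers affirmatively exactly when both hold. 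It therefore suffices to prove that $\GG(\Delta)\equiv_\forall\GG(\Gamma)$ holds precisely when $\Delta<\Gamma_n^e$ and $\Gamma<\Delta_m^e$.

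The implication from the graph conditions to universal equivalence is the easy one, and it uses neither the triangle-built hypothesis nor tameness: if $\Delta<\Gamma_n^e$ and $\Gamma<\Delta_m^e$, these induced-subgraph relations are extension graph embeddings and hence, by \cite{KK}, yield genuine group embeddings $\GG(\Delta)\hookrightarrow\GG(\Gamma_n)$ and $\GG(\Gamma)\hookrightarrow\GG(\Delta_m)$, whence Corollary \ref{cor:onlyif} gives $\GG(\Delta)\equiv_\forall\GG(\Gamma)$.

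For the converse, assume $\GG(\Delta)\equiv_\forall\GG(\Gamma)$ and take the two tame embeddings furnished by Theorem \ref{cor:univequivstronemb}. To pass from the tame embedding $\GG(\Delta)\to\GG(\Gamma_n)$ to $\Delta<\Gamma_n^e$ I would apply Theorem \ref{thm:CDKtbuilt} to the target $\GG(\Gamma_n)$; this is legitimate once one checks that the $n$-inflation of a triangle-built graph is again triangle-built. That check is routine: a square $C_4$ and a $P_3$ contain no pair of \emph{true} twins, while inflation introduces only true twins (the clones of a vertex), so if an induced $C_4$ or $P_3$ in $\Gamma_n$ used two clones of a common vertex of $\Gamma$ these would be true twins in the induced subgraph, a contradiction; hence each induced $C_4$ or $P_3$ in $\Gamma_n$ uses at most one clone per vertex and projects isomorphically to an induced $C_4$ or $P_3$ in $\Gamma$, so $\Gamma_n$ inherits square- and $P_3$-freeness from $\Gamma$ (and conversely, since $\Gamma<\Gamma_n$ is induced). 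To pass from the second tame embedding $\GG(\Gamma)\to\GG(\Delta_m)$ to $\Gamma<\Delta_m^e$ I likewise need $\Delta_m$, equivalently $\Delta$, to be triangle-built. Here I would mimic the role of Lemma \ref{lem:universalprop} in Corollary \ref{cor:decunivth2dpc}: the first tame embedding realises $\GG(\Delta)$ as a subgroup of $\GG(\Gamma_n)$, so every universal sentence true in $\GG(\Gamma_n)$ is true in $\GG(\Delta)$; thus if ``being triangle-built'' is a universal property of the pc group, it descends from $\GG(\Gamma_n)$ (equivalently $\GG(\Gamma)$) to $\GG(\Delta)$, forcing $\Delta$ triangle-built. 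Combining the two resulting equivalences closes the chain.

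The main obstacle is exactly this transfer of the triangle-built property to $\Delta$. Unlike the deflation-triangle-free condition, which Lemma \ref{lem:universalprop} encodes cleanly through centralisers of \emph{arbitrary} elements, it is not obvious that being square- and $P_3$-free is expressible by a universal first-order sentence quantifying over all group elements rather than merely over generators. I would pursue one of three routes, any of which suffices: establish such a universal-sentence characterisation directly; prove that the triangle-built class is closed under universal equivalence, in the spirit of Corollary \ref{cor:closedclasses}; or show that every induced subgraph of the extension graph $\Gamma_n^e$ of the triangle-built graph $\Gamma_n$ is itself triangle-built, so that the already-established relation $\Delta<\Gamma_n^e$ forces $\Delta$ triangle-built with no appeal to logic at all. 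Securing one of these is where the genuine work lies; granting it, the two tame embeddings convert to the extension-graph conditions and decidability follows immediately from Corollary \ref{cor:decEGE}.
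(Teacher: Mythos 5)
Your skeleton is exactly the paper's. The published proof consists of four moves: (i) notice that the $n$-inflation of a triangle-built graph is again triangle-built; (ii) invoke Theorem \ref{cor:univequivstronemb} to replace $\GG(\Delta)\equiv_\forall\GG(\Gamma)$ by the two tame embeddings $\GG(\Delta)\to\GG(\Gamma_n)$ and $\GG(\Gamma)\to\GG(\Delta_m)$; (iii) invoke Theorem \ref{thm:CDKtbuilt} to replace these by the conditions $\Delta<\Gamma_n^e$ and $\Gamma<\Delta_m^e$; (iv) decide both by Corollary \ref{cor:decEGE}. Your easy direction via Corollary \ref{cor:onlyif}, your application of Theorem \ref{thm:CDKtbuilt} to the target $\GG(\Gamma_n)$, and your verification that inflation preserves the triangle-built property all match the paper; the last is in fact a welcome elaboration, since the paper disposes of it with ``it suffices to notice'', and your true-twins argument is correct (neither an induced $C_4$ nor an induced path contains two adjacent vertices with equal closed neighbourhoods, so such a subgraph of $\Gamma_n$ uses at most one clone per vertex and projects isomorphically into $\Gamma$).

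The one point of divergence is the step you isolate as ``the genuine work'': Theorem \ref{thm:CDKtbuilt} places its hypothesis on the \emph{target}, and for the second embedding the target $\Delta_m$ is not a priori triangle-built. You are right that this requires an argument, and you should know that the paper supplies none -- it applies Theorem \ref{thm:CDKtbuilt} to both embeddings in a single sentence, tacitly assuming exactly what worries you. So your proposal stops short precisely where the paper is silent, and leaving the transfer as ``one of three routes would suffice'' is a genuine, if candidly acknowledged, gap in your write-up. Of your three routes, the third is the one that closes it with the tools at hand: by the lemmas of \cite{KK}, an induced square (respectively, an induced path) occurs in $\Gamma_n^e$ if and only if it occurs in $\Gamma_n$ -- the same fact that underlies the weakly chordal item of Corollary \ref{cor:closedclasses}, whose defining condition ($P_3$- and $C_4$-free) coincides with the paper's definition of triangle-built. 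Hence the relation $\Delta<\Gamma_n^e$, already secured from the first tame embedding, forces $\Delta$, and therefore $\Delta_m$, to be triangle-built, after which Theorem \ref{thm:CDKtbuilt} legitimately yields $\Gamma<\Delta_m^e$. Your second route is essentially Corollary \ref{cor:closedclasses} itself, which the paper states without proof (and whose proof would rest on the same graph-theoretic fact), while your first route, a universal sentence in the language of groups in the spirit of Lemma \ref{lem:universalprop}, is not known to exist for this class; the extension-graph argument is the clean repair.
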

\begin{proof}
It suffices to notice that the inflation graph $\Gamma_n$ of a triangle-built graph is again triangle-built. Then by Theorem \ref{cor:univequivstronemb}, $\GG(\Delta) \equiv_\forall \GG(\Gamma)$ if and only if there are tame embeddings $\GG(\Delta) < \GG(\Gamma_n)$ and $\GG(\Gamma) < \GG(\Delta_m)$. By Theorem \ref{thm:CDKtbuilt}, the later is equivalent to $\Delta < \Gamma_n^e$ and $\Gamma < \Delta_m^e$, which is decidable by Corollary \ref{cor:decEGE}.
\end{proof}

\medskip

Our characterisation of the universal equivalence in algebraic terms allows us to transfer algebraic results on embeddability to model-theoretic ones. The following statements are immediate consequences of Theorem \ref{cor:univequivstronemb}.

In \cite{BKS}, Bestvina, Kleiner and Sageev introduce the class of atomic graphs (that is connected graphs with no valence 1 vertices, no cycles of length less than 5 and no separating closed stars of vertices) and establish quasi-isometric rigidity for pc groups defined by this class of graphs: two pc groups defined by atomic graphs are quasi-isometric if and only if they are isomorphic. We show that a similar type of rigidity holds for the elementary theory of the groups defined by atomic graphs, namely:

\begin{cor}\label{cor:atomic}\
Let $A_1$ and $A_2$ be atomic graphs. Then the following are equivalent:
\begin{itemize}
\item $\GG(A_1)$ and $\GG(A_2)$ are elementary equivalent;
\item $\GG(A_1)$ and $\GG(A_2)$ are universally equivalent;
\item $\GG(A_1)$ and $\GG(A_2)$ are isomorphic.
\end{itemize}
\end{cor}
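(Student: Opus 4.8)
The plan is to prove the cycle of implications, of which two are formal and one carries all the content. The implication ``isomorphic $\Rightarrow$ elementarily equivalent'' is immediate, and ``elementarily equivalent $\Rightarrow$ universally equivalent'' holds because the universal theory is a fragment of the full first-order theory; so the whole statement reduces to showing that universal equivalence of $\GG(A_1)$ and $\GG(A_2)$ forces $A_1 \cong A_2$. For this I would start from Theorem \ref{cor:univequivstronemb}: universal equivalence yields tame embeddings $\GG(A_1) \hookrightarrow \GG((A_2)_n)$ and $\GG(A_2) \hookrightarrow \GG((A_1)_m)$, where $n = |V(A_1)|$ and $m = |V(A_2)|$.

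The first genuine step is to observe that atomic graphs are \emph{self-deflating} and triangle-free. Having girth at least $5$, an atomic graph contains no triangles; and if two distinct vertices $v,w$ had $\St(v) = \St(w)$, then $v,w$ would be adjacent with no common neighbour (by triangle-freeness), forcing both to have valence $1$, contrary to the atomic hypothesis. Hence $\defl(A_i) = A_i$ is triangle-free, and since $\defl(\infl(\Gamma)) = \defl(\Gamma)$ we get $\defl((A_2)_n) = A_2$ triangle-free. Theorem \ref{thm:defltfree} then upgrades the tame embeddings to extension-graph embeddings, i.e. $A_1$ is an induced subgraph of $((A_2)_n)^e$ and $A_2$ is an induced subgraph of $((A_1)_m)^e$.

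The second step removes the inflation. I would use the retraction $p \colon \GG((A_2)_n) \to \GG(A_2)$ collapsing all clones $y_{i,j}$ of a vertex to $y_i$, which induces a graph map $((A_2)_n)^e \to A_2^e$ sending $y_{i,j}^{\,g}$ to $y_i^{\,p(g)}$. Applying $p$ to the induced copy of $A_1$ should produce an induced copy of $A_1$ in $A_2^e$: edges are preserved since $p$ is a homomorphism; distinct conjugates of a single generator never commute by Lemma \ref{lem:1}, so non-edges arising within one clone-class survive; and two clones sharing a conjugator cannot both occur among the vertices of the copy of $A_1$, by the same valence argument as above. Injectivity and edge-reflection then follow from a short case analysis, giving $A_1 < A_2^e$, and symmetrically $A_2 < A_1^e$. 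Thus universal equivalence is reduced to mutual extension-graph embeddings between the atomic (hence triangle-free) graphs $A_1$ and $A_2$.

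Finally, by Theorem \ref{thm:KKtfree} these graph embeddings are realised by group embeddings $f \colon \GG(A_1) \hookrightarrow \GG(A_2)$ and $g \colon \GG(A_2) \hookrightarrow \GG(A_1)$, each sending a generator to a conjugate of a generator. Their composite $g \circ f$ is then an injective endomorphism of $\GG(A_1)$ of the same shape, equivalently an induced embedding $A_1 \hookrightarrow A_1^e$. The crux, and the step I expect to be the main obstacle, is the rigidity of atomic graphs: every induced copy of an atomic graph $A_1$ inside its own extension graph $A_1^e$ is the translate of the standard copy, so $g \circ f$ is conjugation composed with a graph automorphism and in particular surjective. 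This is exactly where girth $\ge 5$ and the absence of separating closed stars are needed, and it is the graph-theoretic heart underlying the quasi-isometric rigidity of \cite{BKS}. Granting it, surjectivity of $g \circ f$ forces $g$ to be onto, hence an isomorphism, so $\GG(A_1) \cong \GG(A_2)$; since a partially commutative group determines its defining graph up to isomorphism, $A_1 \cong A_2$, closing the cycle.
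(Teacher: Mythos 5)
Your reduction coincides with the paper's up to its midpoint: the paper likewise combines Theorem \ref{cor:univequivstronemb} with Theorem \ref{thm:defltfree} to turn universal equivalence into the pair of extension-graph embeddings $A_1 < {(A_2)}_n^e$ and $A_2 < {(A_1)}_m^e$, and your observation that atomic graphs are triangle-free and self-deflating (so $\defl({(A_2)}_n)=\defl(A_2)=A_2$ is triangle-free) is exactly the implicit justification needed to invoke Theorem \ref{thm:defltfree}. From that point on, however, the paper does something much shorter than what you attempt: it quotes the companion paper \cite{C2} for the statement that $A_1 < {(A_2)}_n^e$ and $A_2 < {(A_1)}_m^e$ hold simultaneously if and only if $A_1 \cong A_2$, with the inflations left in place. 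It never deflates the embeddings back to $A_2^e$ and never analyses induced copies of $A_1$ inside $A_1^e$.

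This matters because the two steps you substitute for that citation are where your proposal has genuine gaps. First, your rigidity claim --- every induced copy of an atomic graph $A_1$ inside $A_1^e$ is a translate of the standard copy --- is the entire mathematical content of the corollary, and you explicitly do not prove it (``granting it, \dots''). It does not follow from \cite{BKS}, which establishes quasi-isometric rigidity by geometric methods and says nothing about induced subgraphs of extension graphs; within the present paper this step is available only through the reference \cite{C2}, so as a self-contained argument the proposal stops exactly at its acknowledged crux. Second, the inflation-removal step is itself incomplete: the clone-collapsing map $({(A_2)}_n)^e \to A_2^e$, $y_{i,j}^{\,g} \mapsto y_i^{\,p(g)}$, preserves edges because $p$ is a homomorphism, but nothing you say shows it reflects non-edges --- commutation of $y_i^{\,p(g)}$ and $y_k^{\,p(h)}$ in $\GG(A_2)$ need not lift to commutation of $y_{i,j}^{\,g}$ and $y_{k,l}^{\,h}$ upstairs, since $p$ erases all clone letters occurring in the conjugators. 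Your Lemma \ref{lem:1} and valence arguments only control conjugates within a single clone-class, not false adjacencies (or vertex collisions within a clone-class) created when distinct conjugators collapse under $p$; this ``short case analysis'' is not short, and the paper's formulation of the \cite{C2} result with inflations intact is precisely what sidesteps it.
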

\begin{proof}
It follows from Theorem \ref{cor:univequivstronemb} and Theorem \ref{thm:defltfree}, that $\GG(A_1)$ and $\GG(A_2)$ are universally equivalent if and only if $A_1 < {(A_2)}_n^e$ and $A_2 < {(A_1)}_m^e$. It is shown in \cite{C2} that the latter happens if and only the graphs $A_1$ and $A_2$ are isomorphic.
\end{proof}

\medskip 

\begin{cor}\label{cor:closedgraphproducts}
Universal equivalence in the class of pc groups is preserved under free products, direct products and more generally graph products:
$$
\GG(\Delta_i) \equiv_\forall \GG(\Gamma_i) \ i=1, \dots, |V(\Lambda)|  \Rightarrow \mathcal G(\Lambda, S(\Delta))  \equiv_\forall \mathcal G(\Lambda, S(\Gamma)),
$$
where $\Lambda$ is a simplicial graph, $S(\Delta)=\{\GG(\Delta_1), \dots, \GG(\Delta_{|V(\Lambda)|})\}$, $S(\Gamma)=\{\GG(\Gamma_1), \dots, \GG(\Gamma_{|V(\Lambda)|})\}$ and $\mathcal G(\Lambda,S )$ denotes the graph product with underlying graph $\Lambda$ and vertex groups $S$.
\end{cor}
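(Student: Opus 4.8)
The plan is to reduce everything to the main characterisation of universal equivalence, Theorem \ref{cor:univequivstronemb}, by exploiting the fact that a graph product of pc groups is again a pc group. First I would record the combinatorial construction underlying this: given $\Lambda$ with $V(\Lambda)=\{v_1,\dots,v_k\}$ and graphs $\Delta_1,\dots,\Delta_k$, let $\Lambda(\Delta_1,\dots,\Delta_k)$ denote the graph on the disjoint union of the $V(\Delta_i)$ carrying the edges of each $\Delta_i$ inside the $i$-th block and a complete bipartite graph between the $i$-th and $j$-th blocks precisely when $(v_i,v_j)\in E(\Lambda)$. Since each vertex group $\GG(\Delta_i)$ is itself a graph product of copies of $\BZ$, associativity of graph products gives $\mathcal G(\Lambda,S(\Delta))=\GG(\Lambda(\Delta_1,\dots,\Delta_k))$, and likewise for $\Gamma$.

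The second preliminary observation, verified directly from Definition \ref{def:infl}, is that inflation commutes with this substitution: for every $N$ one has $\Lambda(\Delta_1,\dots,\Delta_k)_N=\Lambda((\Delta_1)_N,\dots,(\Delta_k)_N)$, since two clones of vertices lying in distinct blocks $i\ne j$ are adjacent if and only if $(v_i,v_j)\in E(\Lambda)$ (independently of $N$), while two clones inside a single block $\Delta_i$ are adjacent if and only if the originals are adjacent in $(\Delta_i)_N$. On the group level this reads $\GG(\Lambda(\Gamma_1,\dots,\Gamma_k)_N)=\mathcal G(\Lambda,\{\GG((\Gamma_i)_N)\})$.

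Now for the main argument. By hypothesis $\GG(\Delta_i)\equiv_\forall\GG(\Gamma_i)$, so Theorem \ref{cor:univequivstronemb} supplies tame, in particular injective, embeddings $\varphi_i\colon\GG(\Delta_i)\to\GG((\Gamma_i)_{n_i})$ and $\psi_i\colon\GG(\Gamma_i)\to\GG((\Delta_i)_{m_i})$. Choosing $N\ge\max_i n_i$ and composing each $\varphi_i$ with the induced-subgraph inclusion $\GG((\Gamma_i)_{n_i})\hookrightarrow\GG((\Gamma_i)_N)$ (further inflation only adds cloned generators), I obtain embeddings $\GG(\Delta_i)\to\GG((\Gamma_i)_N)$ for all $i$ with a common $N$. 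A family of injective homomorphisms of vertex groups induces an injective homomorphism of the corresponding graph products, so these assemble into an embedding
$$\Phi\colon\mathcal G(\Lambda,S(\Delta))\longrightarrow\mathcal G(\Lambda,\{\GG((\Gamma_i)_N)\})=\GG(\Lambda(\Gamma_1,\dots,\Gamma_k)_N),$$
that is, an embedding of $\GG(\Lambda(\Delta_1,\dots,\Delta_k))$ into the $N$-inflation of $\GG(\Lambda(\Gamma_1,\dots,\Gamma_k))$. The symmetric construction from the $\psi_i$ yields an embedding of $\GG(\Lambda(\Gamma_1,\dots,\Gamma_k))$ into an $M$-inflation of $\GG(\Lambda(\Delta_1,\dots,\Delta_k))$. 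Corollary \ref{cor:onlyif} then immediately gives $\mathcal G(\Lambda,S(\Delta))\equiv_\forall\mathcal G(\Lambda,S(\Gamma))$.

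The two points requiring genuine, if routine, justification, and where I would be most careful, are the inflation–substitution identity and the functoriality of graph products, namely that injectivity of the vertex maps propagates to injectivity of the assembled homomorphisms; the latter is a standard normal-form fact for graph products, which I would either cite or establish via a reduced-word argument. I would also note that tameness of the $\varphi_i,\psi_i$ is not actually needed for the conclusion, since Corollary \ref{cor:onlyif} only requires embeddings into inflations; tameness is merely what Theorem \ref{cor:univequivstronemb} happens to provide.
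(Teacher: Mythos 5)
Your proof is correct and follows exactly the route the paper intends: the paper states this corollary as an ``immediate consequence'' of Theorem \ref{cor:univequivstronemb} (via Corollary \ref{cor:onlyif}) and gives no further argument, and your write-up supplies precisely the routine details that ``immediate'' presupposes --- the identity $\Lambda(\Delta_1,\dots,\Delta_k)_N=\Lambda((\Delta_1)_N,\dots,(\Delta_k)_N)$ and the normal-form fact that injective vertex-group maps assemble to an injective map of graph products. Your closing observation that tameness is not needed, only the existence of embeddings into inflations as in Corollary \ref{cor:onlyif}, is also accurate.
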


\medskip

\begin{cor}\label{cor:join}
If $\Gamma$ is not a join and has more than one vertex, then
$$
\Th_\forall(\GG(\Gamma)) = \Th_\forall ( \mathop{\ast}\limits_{i=1}^{n} \GG(\Gamma)).
$$
\end{cor}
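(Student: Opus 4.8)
The plan is to apply the characterisation of universal equivalence in Theorem~\ref{cor:univequivstronemb}. First observe that the free product $\mathop{\ast}\limits_{i=1}^{n}\GG(\Gamma)$ is itself a pc group: it equals $\GG(\Delta)$, where $\Delta$ is the disjoint union of $n$ copies of $\Gamma$ (the graph product over the edgeless graph). Writing $m=|V(\Gamma)|$ we have $|V(\Delta)|=nm$, so by Theorem~\ref{cor:univequivstronemb} it suffices to produce two tame embeddings, $\GG(\Gamma)\to\GG(\Delta_m)$ and $\GG(\Delta)\to\GG(\Gamma_{nm})$, where $\Delta_m$ and $\Gamma_{nm}$ denote the corresponding inflations. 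The first embedding is immediate: $\Gamma$ is a free factor of $\mathop{\ast}\limits_{i=1}^{n}\GG(\Gamma)$, hence an induced subgraph of $\Delta$, which in turn is an induced subgraph of its inflation $\Delta_m$; sending each generator of $\GG(\Gamma)$ to the corresponding generator of $\GG(\Delta_m)$ is therefore an extension graph embedding, and in particular tame.

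For the second embedding I would prove the stronger statement that $\Delta<\Gamma^e$. Since $\GG(\Gamma)<\GG(\Gamma_{nm})$ and $\Gamma^e$ is an induced subgraph of $(\Gamma_{nm})^e$, this yields the desired extension graph (hence tame) embedding $\GG(\Delta)\to\GG(\Gamma_{nm})$ by \cite{KK}. Concretely, I must find $n$ pairwise ``disjoint'' induced copies of $\Gamma$ inside $\Gamma^e$, that is, conjugates $\{v^{g_i}\mid v\in V(\Gamma)\}$, $i=1,\dots,n$, with $[v^{g_i},w^{g_j}]\neq1$ whenever $i\neq j$. Here the hypothesis that $\Gamma$ is not a join enters decisively: by the decomposition~\eqref{eq:decomp} it is equivalent to $\overline\Gamma$ being connected, so $\GG(\Gamma)$ has trivial centre and there is a cyclically reduced single block $z$ of full support $\alpha(z)=V(\Gamma)$; by the Centraliser Theorem~\ref{thm:centr} its centraliser is the cyclic group $\langle z\rangle$. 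Taking $g_i=z^{iN}$ for a large $N$, the pair $v^{g_i},w^{g_j}$ commutes if and only if $w^{z^{k}}\in C(v)=\GG(\St(v))$, where $k=(j-i)N$.

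The hard part is ruling this out for all generators $v,w$ and all $0<|j-i|<n$. When $w\notin\St(v)$ this is easy: the retraction of $\GG(\Gamma)$ that kills the generators of $\St(v)$ (landing in the pc group of the induced subgraph on $V(\Gamma)\setminus\St(v)$) sends $C(v)=\GG(\St(v))$ to $1$ but sends the conjugate $w^{z^k}$ to a nontrivial conjugate of $w$, so $w^{z^k}\notin\GG(\St(v))$. The remaining case $w\in\St(v)$ is the genuine obstacle: using that a conjugate of a generator lies in a parabolic $\GG(\St(v))$ only if the conjugator lies in $\GG(\St(v))\,C(w)=\GG(\St(v))\GG(\St(w))$, I would have to exclude a factorisation $z^{k}=ab$ with $a\in\GG(\St(v))$ and $b\in\GG(\St(w))$. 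This is exactly where the full support of $z$ and the single-block property (connectedness of $\overline\Gamma$ on $\alpha(z)$), together with the high power $N$, are used: sorting all the $\St(v)$-letters of $z^{k}$ to the left across the many repeated copies of $z$ would force $\alpha(z)$ to disconnect in $\overline\Gamma$, contradicting that $z$ is a single block.

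I expect this two-parabolic factorisation argument to be the main technical point of the proof; alternatively, one may simply invoke the fact, implicit in the self-similarity results of \cite{KK}, that the extension graph of a non-join contains arbitrarily many disjoint induced copies of $\Gamma$. Once $\Delta<\Gamma^e$ is established, both tame embeddings are in hand and Theorem~\ref{cor:univequivstronemb} gives $\GG(\Gamma)\equiv_\forall\mathop{\ast}\limits_{i=1}^{n}\GG(\Gamma)$, which is precisely the claimed equality $\Th_\forall(\GG(\Gamma))=\Th_\forall(\mathop{\ast}\limits_{i=1}^{n}\GG(\Gamma))$.
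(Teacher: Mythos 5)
Your proposal is correct and takes essentially the paper's route: the paper derives Corollary \ref{cor:join} as an immediate consequence of Theorem \ref{cor:univequivstronemb}, exactly as you do, with the canonical embedding $\GG(\Gamma)\to\GG(\Delta_m)$ and an embedding of the free product coming from the fact that, for a non-join $\Gamma$, the extension graph $\Gamma^e$ contains $n$ pairwise non-adjacent induced copies of $\Gamma$. The one ingredient you reprove by hand is precisely the device already used in the disconnected case of the paper's proof of Theorem \ref{thm:boundembd} (conjugating one copy by a large power of $y_1\cdots y_M$ and ruling out commutation across copies because a non-join has no generator commuting with all of $V(\Gamma)$, via the subword criterion of \cite{CK1}); your two-parabolic factorisation of $z^{k}$ is a correct alternative to that step, provided $N$ is taken large enough (on the order of the diameter of $\overline\Gamma$) so that the dependence chain along a path in $\overline\Gamma$ across successive copies of $z$ obstructs any factorisation $z^{k}=ab$ with $a\in\GG(\St(w))$, $b\in\GG(\St(v))$.
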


\medskip

\begin{cor}\label{cor:closedclasses}
The following classes of partially commutative groups are closed under universal equivalence, that is if $\GG(\Gamma)$ belongs to a class $\mathcal A$ described below and $\GG(\Delta)$ is universally equivalent to $\GG(\Gamma)$, then $\GG(\Delta)$ also belongs to $\mathcal A$:
\begin{itemize}
\item the class of pc groups whose deflation graph is a tree: $\{ \GG(\Gamma) \mid \defl(\Gamma) \hbox{ is a tree} \}$;
\item the class of pc groups whose deflation graph is triangle-free: $\{ \GG(\Gamma) \mid \defl(\Gamma) \hbox{ is triangle-free} \}$;
\item the class of weakly chordal pc groups: $\{ \GG(\Gamma) \mid \Gamma \hbox{ is $P_3$-free and $C_4$-free} \}$.
\end{itemize}
\end{cor}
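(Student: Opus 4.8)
The plan is to show that membership in each of the three classes is captured by a set of \emph{universal} first-order sentences in the language of groups. Since $\GG(\Delta)\equiv_\forall\GG(\Gamma)$ means precisely that the two groups have the same universal theory (equivalently, the same existential theory, as a sentence is universal exactly when its negation is existential), any property of $\GG(\Gamma)$ of this form passes to $\GG(\Delta)$. The template is Lemma \ref{lem:universalprop}, which already shows that ``$\defl(\Gamma)$ is triangle-free'' is equivalent to the single universal sentence $(\ref{eq:property})$; this disposes of the second class immediately, and the third class will be handled by the same mechanism.

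For the remaining forbidden–induced–subgraph conditions (absence of long cycles, needed for forests, and absence of $P_3$ and $C_4$ for the weakly chordal class) I would, for each forbidden graph $H$ on vertices $1,\dots,p$ with edge set $E_H$, write the existential sentence
\[
\sigma_H \;=\; \exists\, z_1,\dots,z_p,(t_{ij})_{i<j}\ \Big(\bigwedge_i z_i\neq 1 \wedge \bigwedge_{(i,j)\in E_H}[z_i,z_j]=1 \wedge \bigwedge_{(i,j)\notin E_H}[z_i,z_j]\neq 1 \wedge \bigwedge_{i<j}\delta(t_{ij};z_i,z_j)\Big),
\]
where $\delta(t;x,y)$ abbreviates $([t,x]=1\wedge[t,y]\neq 1)\vee([t,x]\neq 1\wedge[t,y]=1)$, so that $t_{ij}$ witnesses $C(z_i)\neq C(z_j)$. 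One checks, as in Lemma \ref{lem:universalprop} but now invoking the full centraliser description (Theorem \ref{thm:centr}) rather than only the triangle case, that $\GG(\Gamma)\models\sigma_H$ if and only if $\defl(\Gamma)$ contains $H$ as an induced subgraph: the centraliser clauses force the $z_i$ into pairwise distinct $\sim$-classes, so clones introduced by inflation cannot fabricate a spurious copy of $H$, and conversely every induced copy of $P_3$, $C_4$, or a cycle lifts to representatives with pairwise distinct centralisers (no such graph contains two equivalent vertices). Hence ``$\defl(\Gamma)$ has no induced $H$'' is the universal sentence $\neg\sigma_H$, and intersecting over the appropriate family of $H$ yields universal theories expressing the forest condition (no induced cycle of any length) and weak chordality (no induced $P_3$ or $C_4$); both are therefore preserved under $\equiv_\forall$, which completes the weakly chordal class and the \emph{forest} half of the tree class.

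What remains for the first class is the genuinely non-elementary ingredient: a tree is a forest that is moreover \emph{connected}, and connectivity of $\defl(\Gamma)$ is not a universal (indeed not a first-order) property, so it cannot be obtained by the sentence-pushing above. This is the main obstacle. Since a forest deflation decomposes $\GG(\Delta)$ as the free product of the pc groups of the connected components of $\Delta$, the failure of $\defl(\Delta)$ to be a tree means exactly that $\GG(\Delta)=\GG(\Delta_A)\ast\GG(\Delta_B)$ is a nontrivial free product. Assuming this for contradiction, Theorem \ref{cor:univequivstronemb} gives tame embeddings $\GG(\Delta)\hookrightarrow\GG(\Gamma_n)$ and $\GG(\Gamma)\hookrightarrow\GG(\Delta_m)$; as $\defl(\Gamma_n)=\defl(\Gamma)$ and $\defl(\Delta_m)=\defl(\Delta)$ are triangle-free, Theorem \ref{thm:defltfree} upgrades these to extension graph embeddings $\Delta<\Gamma_n^e$ and $\Gamma<\Delta_m^e$. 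Here $\Delta_m$ is again disconnected, so $\GG(\Delta_m)$ is a free product and $\Delta_m^e$ is disconnected, each component sitting inside a single conjugate of a free factor and hence embedding into $(\Delta_A)_m^e$ or $(\Delta_B)_m^e$. Since $\Gamma$ is connected it lands in one of these, say $\Gamma<(\Delta_A)_m^e$; together with $\Delta_A<\Gamma_n^e$ and Corollary \ref{cor:onlyif} this forces $\GG(\Delta_A)\equiv_\forall\GG(\Gamma)\equiv_\forall\GG(\Delta_A)\ast\GG(\Delta_B)$ with $\GG(\Delta_B)\neq 1$.

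The hardest step is then to turn this into a contradiction, i.e. to prove that a pc group with connected tree deflation is never universally equivalent to a nontrivial free product of which it is a free factor. I would extract this from the characterisation of universal equivalence by mutual discrimination (recorded before Corollary \ref{cor:tembuni}, via \cite{remes}): a discriminating family $\GG(\Delta_A)\ast\GG(\Delta_B)\to\GG(\Delta_A)$ must annihilate the factor $\GG(\Delta_B)$ on large finite sets, which is incompatible with $\GG(\Delta_A)$ reciprocally discriminating a genuine free product, the obstruction being the commuting pair carried by any edge of the tree $\defl(\Gamma)$ (recall $\defl$ is never the single edge, so a tree deflation is either a point, the abelian case handled directly, or has at least three vertices and hence an edge) — a relation that cannot be realised across the free splitting. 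Handling this free-product invariant carefully, together with an induction on the number of components to reduce to the indecomposable case, is where the real work lies; once it is in place one concludes that $\defl(\Delta)$ must be connected, hence a tree, and the first class is closed as well.
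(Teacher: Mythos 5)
Your treatment of the second class is correct and is exactly the paper's mechanism (Lemma \ref{lem:universalprop} already exhibits the universal sentence; the paper then declares all three bullets immediate from Theorem \ref{cor:univequivstronemb}). But your central claim for the other two classes --- that $\GG(\Gamma)\models\sigma_H$ if and only if $\defl(\Gamma)$ contains $H$ as an induced subgraph, checked ``as in Lemma \ref{lem:universalprop}'' --- is false, and the direction you wave through is precisely the hard one. Concretely, take $H=P_3$ and $\Gamma=C_4$, so $\GG(\Gamma)\cong F(a,b)\times F(c,d)$. The tuple $z_1=(a,1)$, $z_2=(1,c)$, $z_3=(b,1)$, $z_4=(b,d)$ realises the $P_3$ commutation pattern (edges $(1,2),(2,3),(3,4)$ hold, the three non-edges fail), and the centralisers $C(a)\times F$, $F\times C(c)$, $C(b)\times F$, $C(b)\times C(d)$ are pairwise distinct, so all your witnesses $t_{ij}$ exist; yet $\defl(C_4)=C_4$ is $P_3$-free. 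Similarly, the co-contraction embeddings $\GG(C_n)\hookrightarrow\GG(C_5)$ of Kim--Koberda ($n\geq 5$) produce tuples satisfying $\sigma_{C_n}$ in $\GG(C_5)$ although $\defl(C_5)=C_5$ contains no induced $C_n$ for $n>5$, so your proposed axiomatisation of the forest condition by $\{\neg\sigma_{C_k}\}_k$ is not available either. The triangle case is special: three pairwise commuting elements generate an abelian subgroup, which Theorem \ref{thm:centr} controls; for non-complete $H$ the elements realising the pattern are tied to induced subgraphs of extension graphs of inflations (this is what Definition \ref{def:graphasstog} and Lemma \ref{lem:elemgraph} formalise), not of $\defl(\Gamma)$, and already for $H=C_4$ the true equivalence is a nontrivial theorem (Kambites: $F_2\times F_2\le\GG(\Gamma)$ iff $\Gamma$ has an induced square), not a routine centraliser check. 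This is exactly the difficulty the paper bypasses by going through Theorem \ref{cor:univequivstronemb} and Theorem \ref{thm:defltfree} to obtain mutual extension-graph embeddings $\Delta<\Gamma_n^e$, $\Gamma<\Delta_m^e$, and then quoting the graph-theoretic facts (from \cite{KK}, \cite{C2}) that the relevant classes pass to finite induced subgraphs of $\Gamma_n^e$.

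The connectivity step is worse than a gap: the statement you set out to prove --- that a pc group with connected tree deflation is never universally equivalent to a nontrivial free product having it as a free factor --- is refuted by the paper's own Corollary \ref{cor:join}. The path $P_3$ on four vertices is a tree, equals its own deflation, and is not a join, so $\GG(P_3)\equiv_\forall\GG(P_3)\ast\GG(P_3)=\GG(P_3\sqcup P_3)$, whose deflation is a disconnected forest. Your discrimination heuristic (``a discriminating family must annihilate the factor $\GG(\Delta_B)$'') is wrong for the same reason $F_2\equiv_\forall F_2\ast F_2=F_4$ holds: one discriminates $\GG(\Gamma)\ast\GG(\Gamma)$ into $\GG(\Gamma)$ by sending the second factor to generic conjugates of the first, and the commuting pairs carried by edges of the tree pose no obstruction, since each relation is realised inside a single factor's image. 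So connectivity of the deflation is genuinely not preserved by universal equivalence, and no refinement of your final step can succeed; indeed, read literally, the first bullet of the corollary is in tension with Corollary \ref{cor:join} and should be understood with ``tree'' interpreted so as to be closed under disjoint unions (i.e.\ as the forest condition your syntactic half was aiming at). Your instinct that connectivity is not first-order was sound, but the correct conclusion is that it must be dropped, not proved.
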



\begin{thebibliography}{10}

\bibitem[BKS]{BKS} M. Bestvina, B. Kleiner and M. Sageev \emph{The asymptotic geometry of right-angled Artin groups, I} Geom. Topol. \textbf{12} (2008) 1653--1699.

\bibitem[C2]{C2} M. Casals-Ruiz, \emph{Embeddability and quasi-isometric classification of partially commutative groups}, submitted. 

\bibitem[CDK]{CDK} M. Casals-Ruiz, A. Duncan and I. Kazachkov, \emph{Embeddings between two partially commutative groups: two counterexamples}, J. Algebra, \textbf{390} (2013) pp. 87-99.

\bibitem[CK10]{CK1} M.~Casals-Ruiz, I.~Kazachkov, \textit{Elements of Algebraic Geometry and the Positive Theory of Partially Commutative Groups}, Canad. J. Math. \textbf{62} (2010), no.3, 481--519.

\bibitem[CSS]{CSS} J. Crisp, M. Sageev and  M. Sapir, \emph{ Surface Subgroups of Right-Angled Artin Groups}, Internat. J. Algebra Comput., \textbf{18} (2008), no. 3, 443--491.

\bibitem[DMR07]{DMR} E. Daniyarova, A. Miasnikov, V. Remeslennikov, \textit{Unification theorems in algebraic geometry},  in: ``Aspects of Infinite Groups: A Festschrift in Honor of Anthony Gaglione'', Algebra and Discrete Mathematics V. 1, pp. 80-111, World Scientific, 2007.


\bibitem[DM]{Dikert-Metivier} V. Diekert, Y. Métivier
\emph{Partial commutation and traces} in: G. Rozenberg, A. Salomaa (Eds.) Handbook of Formal Languages, \textbf{3}, Springer, Berlin (1997), pp. 457--533

\bibitem[D87]{Droms} C.~Droms, {\em Isomorphisms of graph groups}, 
Proc. Amer. Math. Soc.,\textbf{100} (1987), no. 3, 407--408.

\bibitem[DK]{DK} G.~Duchamp and D.~Krob, {\em Partially Commutative Magnus Transformations}, Internat. J. Algebra Comput. {\bf 3} (1993) 15-41.

\bibitem[EKR]{EKR} E.~S.~Esyp, I.~V.~Kazachkov and V.~N.~Remeslennikov, {\em Divisibility Theory and Complexity of Algorithms for Free Partially Commutative Groups}, In: Groups, Languages, Algorithms. Contemoprary Mathematics {\bf 378} (2005), 319-348.

\bibitem[KK]{KK} S.-H. Kim, T. Koberda, {\em Embedability between right-angled Artin groups}, Geom. Topol. \textbf{17} (2013), 493--530.

\bibitem[KK2]{KK2}S.-H. Kim, T. Koberda, {\em Anti-trees and right-angled Artin subgroups of planar braid groups}, arXiv:1312.6465

\bibitem[K]{Koberda} T. Koberda \emph{Right-angled Artin groups and a generalized isomorphism problem for finitely generated subgroups of mapping class groups}, Geom. Funct. Anal. \textbf{22} (2012), no. 6, 1541--1590.

\bibitem[Kunc]{Kunc} M. Kunc, \emph{Undecidability of the trace coding problem and some decidable cases}, Theoret. Comput. Sci., \textbf{310} (2004), 393--456

\bibitem[Rem89]{remes} V.~N.~Remeslennikov, \emph{$\exists$-free groups},  Siberian Math. J., \textbf{30} (1989), no.~6, 153--157.

\end{thebibliography}
\end{document}